\newtheorem{theorem}{Theorem}[section]
\newtheorem{corollary}[theorem]{Corollary}
\newtheorem{lemma}[theorem]{Lemma}
\newtheorem{proposition}[theorem]{Proposition}
\newtheorem{conjecture}[theorem]{Conjecture}
\newtheorem{remark}[theorem]{Remark}
\newenvironment{proof}[1][Proof]{\begin{trivlist}
\item[\hskip \labelsep {\bfseries #1}]}{\end{trivlist}}
\def\N{{\mathbb N}}
\def\R{{\mathbb R}}
\def\Z{{\mathbb Z}}
\def\C{{\mathbb C}}
\def\gg{\mathfrak g}
\def\eg{\emph{e.g.\,}}
\def\g{\mathfrak{g}}
\def\h{\mathfrak{h}}
\def\hg{\widehat{\mathfrak g}}
\def\la{\lambda}
\def\L{\Lambda}
\def\<{\langle\,}
\def\>{\,\rangle}
\def\ds{\displaystyle}
\def\L{\mathfrak{L}}
\title{On the KNS conjecture in type $E$}
\author{Anne-Sophie Gleitz}
\date{}
\begin{document}

\maketitle

\begin{abstract}
For the exceptional types $E_6$, $E_7$, and $E_8$,
we prove that the specializations at roots of unity of the quantum dimensions of the 
Kirillov-Reshetikhin modules give real solutions of $\ell$-restricted $Q$-systems,
as conjectured by Kuniba, Nakanishi, and Suzuki \cite{KNS1}.
We also show that these solutions are positive in type $E_6$. In type $E_7$ and $E_8$, 
we only prove positivity for a subset of the nodes of the Dynkin diagram.
\end{abstract}

\section{Introduction}

Let $\delta$ be an ADE Dynkin diagram with vertex set $I$. 
Kirillov and Reshetikhin~\cite{KR} have attached to $\delta$ an infinite system of algebraic equations
called a $Q$-system, with unknowns $Q^{(i)}_k\ (i\in I,\ k\in \N)$ in a commutative ring.
It is given by
\begin{equation}\label{eq1}
\left(Q_k^{(i)}\right)^2 = Q_{k-1}^{(i)} Q_{k+1}^{(i)} + \prod_{j\sim i} Q_k^{(j)},
\qquad
(i\in I,\ k\ge 1),
\end{equation}
where, in the product, the notation $j\sim i$ means that $j$ runs over all neighbours 
of $i$ in the diagram $\delta$. One usually imposes the initial condition: 
\begin{equation}\label{eq2}
Q_0^{(i)} = 1,\qquad (i\in I). 
\end{equation}
Fix a positive integer $\ell$. The $\ell$-restricted $Q$-system has a finite number of
unknowns $Q^{(i)}_k\ (i\in I,\ 0\le k\le \ell)$ satisfying the same equations (\ref{eq1}) and (\ref{eq2})
together with the new boundary condition:
\begin{equation}
Q_\ell^{(i)} = 1,\qquad (i\in I). 
\end{equation}

It is known (see \cite{Lee}) that $\ell$-restricted $Q$-systems always have a unique \emph{positive} solution,
that is, a unique solution satisfying 
\begin{equation}
 Q_k^{(i)} \in \R_{>0}, \qquad (i\in I,\ 0\le k\le \ell).
\end{equation}
There are various motivations coming from conformal field theory and algebraic $K$-theory
to obtain an explicit description of this positive solution \cite{KNS2}. In particular, 
if $Q^{(i)}_k\ (i\in I,\ 0\le k\le \ell)$ is this solution, then
the numbers
\[
x^{(i)}_k := \frac{\prod_{j\sim i} Q_k^{(j)}}{(Q_k^{(i)})^2} \in ]0,1[, 
\qquad (i\in I,\ 0\le k\le \ell),
\]
are the arguments of a nice dilogarithm identity, see \cite[Th. 5.2, Prop. 14.1]{KNS2}.

Kuniba, Nakanishi and Suzuki \cite{KNS1} have given a conjectural description of the
positive solution of an $\ell$-restricted $Q$-system in terms of quantum dimensions of Kirillov-Reshetikhin
modules over the quantum affine algebra attached to $\delta$ (see below Conjecture~\ref{KNS-conj} for a precise statement).
The conjecture is easy to check in type $A$, but it remained an open problem
for a long time in other types. Recently, Lee \cite{Lee} has given a proof 
of the KNS conjecture in type $D$. 
In this paper we study the KNS conjecture in type $E$.
We show (Theorem~\ref{knsE678}) that the quantum dimensions of Kirillov-Reshetikhin modules give indeed real
solutions of the $\ell$-restricted $Q$-system.
We also prove the positivity of these solutions for type $E_6$,
and for some subsets of nodes of the Dynkin diagrams in type $E_7$ and $E_8$.
The main ingredients of our proofs are (i) some character formulas of Chari for Kirillov-Reshetikhin 
modules attached to certain nodes of the Dynkin diagrams, (ii) the level $\ell$ 
action of the affine Weyl group on the weight lattice, already used by Lee in \cite{Lee},
and (iii) the notion of log-concavity for sequences of positive real numbers.

What makes type $E$ more difficult than type $D$ is that there are many nodes of the Dynkin
diagram with labels $a_i > 2$, so it is difficult to check positivity of the quantum dimensions
at these nodes. Moreover, there are no convenient characters formulas in the literature for the 
Kirillov-Reshetikhin modules attached to these nodes, so in order to get
around this difficulty, we first check the KNS conjecture for the extremal nodes, and then
move to the remaining nodes using the $Q$-system.

\subsection*{Acknowledgements}
The author would like to thank B. Leclerc for his help, precious advice and patience, 
D. Hernandez for providing useful information on Kirillov-Reshe\-ti\-khin modules,
P. Br\"and\'en for some correspondence about log-concavity, and
P.~Browne for kindly providing his tables of positive roots.

\section{Statement of the KNS conjecture and of the main results}

\subsection{Simple Lie algebras and root systems}
Let $\g$ be a simple Lie algebra over $\C$ with Dynkin diagram $\delta$.
Fix a Cartan subalgebra~$\h$.
Let $\Phi\subset \h^*$ be the root system of $\g$, and $\Phi^+$ its subset of positive roots.
Let $\alpha_i, \varpi_i\ (i\in I)$ denote the simple roots and the fundamental 
weights, respectively. 
We fix a symmetric bilinear form $(\cdot\mid\cdot)$ on $\h^*$, normalized
by
\begin{equation}
 (\alpha_i\mid\alpha_i) = 2,\qquad  (\alpha_i \mid \varpi_j) = \delta_{ij}.
\end{equation}
We then have 
\begin{equation}
\alpha_i = \sum_{j\in I} c_{ji}\varpi_j, 
\end{equation}
where the coefficients $c_{ji} = (\alpha_j\mid\alpha_i)$ 
are the entries of the Cartan matrix of $\g$.
We denote by 
\begin{equation}
\rho = \sum_{i\in I} \varpi_i,
\qquad 
\theta = \sum_i a_i \alpha_i,
\end{equation}
the Weyl vector and the highest root, respectively. Here the $a_i\ (i\in I)$
are the Dynkin labels. 
We have \cite[Chap.6, Prop. 3.8]{B}
\begin{equation}\label{eq-Coxeter}
(\rho\mid \theta) = \sum_{i\in I} a_i = h-1, 
\end{equation}
where $h$ is the Coxeter number.

Let $P=\bigoplus_{i\in I} \Z\varpi_i$ be the weight lattice, and 
$P_+=\bigoplus_{i\in I} \N\varpi_i$ the monoid of dominant weights.
For $\lambda\in P_+$, we denote by $V(\lambda)$ the irreducible complex representation of $\g$
with highest weight~$\lambda$, and by $\chi(V(\lambda))\in\Z[P]$ its character. 
The dimension of $V(\lambda)$ is given by Weyl's formula:
\begin{equation}
 \dim V(\lambda) = \prod_{\beta\in\Phi^+} \frac{(\lambda+\rho\mid \beta)}{(\rho\mid\beta)}.
\end{equation}
For $\zeta\in\C^*$, and $k\in\Z$ we define the $\zeta$-integer $[k]_\zeta := (\zeta^k-\zeta^{-k})(\zeta-\zeta^{-1})^{-1}$.
The \emph{$\zeta$-dimension} of $V(\lambda)$ is the $\zeta$-analogue of $\dim V(\lambda)$ given by
\begin{equation}\label{eq-zeta}
d_\zeta(\lambda) := \prod_{\beta\in\Phi^+} \frac{[(\lambda+\rho\mid \beta)]_\zeta}{[(\rho\mid\beta)]_\zeta}. 
\end{equation}
When $|\zeta|=1$, this is a real number, well-defined if $[(\rho\mid\beta)]_\zeta \not = 0$ for ev\nolinebreak ery \nolinebreak$\beta\in\nolinebreak \Phi^+$.

\subsection{Kirillov-Reshetikhin modules}

Let $U_q(\hg)$ be the Drinfeld-Jimbo quantum enveloping algebra of the affine Lie algebra
$\hg$ associated with $\g$ (see \eg \cite{CP}).
It has a natural subalgebra isomorphic to the quantum enveloping algebra $U_q(\g)$ of $\g$.
Here, we assume that the quantum parameter $q\in\C^*$ is not a root of unity.
It follows that we can identify the characters of $U_q(\g)$ with those of $\g$.

The Kirillov-Reshetikhin modules $W^{(i)}_{k,a}$ are some special irreducible finite-dimensional representations
of $U_q(\hg)$, depending on three parameters $i\in I$, $k\in\N$, $a\in \C^*$ (see \cite[\S4.2]{KNS2}).
We will only be interested in their restrictions to $U_q(\g)$, which are independent of $a$,
and will be denoted by $W^{(i)}_k$. The $U_q(\g)$-module $W^{(i)}_k$ is not irreducible
in general. Its character can be expressed as an $\N$-linear combination of irreducible characters of $\g$:
\begin{equation}\label{krm}
 \chi\left(W^{(i)}_k\right) = \sum_{\lambda\in P_+} a^{(i)}_k(\lambda)\, \chi(V(\lambda)).
\end{equation}
It follows from the Kirillov-Reshetikhin conjecture, proved by Nakajima \cite{N}, 
that the characters $\chi(W^{(i)}_k)\ (i\in I,\,k\in\N)$
give a solution of the unrestricted $Q$-system (\ref{eq1}) (\ref{eq2}) in the ring $\Z[P]$,
see \cite[Prop. 13.10]{KNS2}.

Define the $\zeta$-dimension of a Kirillov-Reshetikhin module by
\begin{equation}\label{decomp}
 d_\zeta\left(W^{(i)}_k\right) := \sum_{\lambda\in P_+} a^{(i)}_k(\lambda)\, d_\zeta(\lambda).
\end{equation}
Since the map $\chi(V) \mapsto d_\zeta(V)$ is additive and multiplicative, the numbers
$d_\zeta(W^{(i)}_k)\ (i\in I,\,k\in\N)$ give a solution of the unrestricted $Q$-system in $\C\:$.

\subsection{The KNS conjecture}

From now on, we fix $\ell \in \Z_{>0}$ and we set 
\begin{equation}\label{not}
l := \ell + h,\qquad  \zeta := \exp(i\pi/l).
\end{equation}
Because of (\ref{eq-Coxeter}), the $\zeta$-dimension (\ref{eq-zeta}) is well-defined for every $\lambda\in P_+$.

\begin{conjecture}[{\cite[Conjecture 14.2]{KNS1}}]\label{KNS-conj}
The collection of real numbers 
\[
Q^{(i)}_k := d_\zeta\left(W^{(i)}_k\right),\qquad (i\in I,\ 0\le k \le \ell),
\]
is a solution of the $\ell$-restricted $Q$-system. Moreover
the following properties hold for any $i\in I$:
\begin{itemize}
 \item[(i)] $d_\zeta\left(W^{(i)}_{k}\right) = 0$ for $k\in  \llbracket \ell + 1,l-1\rrbracket$.
 \item[(ii)] $Q_k^{(i)} = Q^{(i)}_{\ell-k}$ for $k\in\llbracket 0,\ell\rrbracket$.
 \item[(iii)] $Q^{(i)}_k > 0$ for $k\in\llbracket 0,\ell\rrbracket$. 
 \item[(iv)] $Q_k^{(i)} < Q_{k+1}^{(i)}$ for $k\in \llbracket 0,\lfloor \ell/2\rfloor-1\rrbracket$.
\end{itemize}
\end{conjecture}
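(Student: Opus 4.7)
The starting point is that, as noted after (\ref{decomp}), the $\zeta$-dimensions $d_\zeta(W^{(i)}_k)$ automatically satisfy the unrestricted $Q$-system (\ref{eq1})--(\ref{eq2}) thanks to Nakajima's proof of the Kirillov-Reshetikhin conjecture. To upgrade this to a solution of the $\ell$-restricted $Q$-system one only needs the single boundary condition $Q^{(i)}_\ell = 1$ for each $i$, which is in turn an immediate consequence of the symmetry statement (ii) at $k = 0$. So the whole proof reduces to verifying the four properties (i)--(iv) of the conjecture, and the strategy is to do this first at a set of extremal nodes $i$ of the Dynkin diagram (those with small Dynkin label $a_i$, for which Chari provides an explicit character formula) and then to propagate the properties to the remaining nodes via the $Q$-system relations.

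At an extremal node, Chari's formula exhibits $\chi(W^{(i)}_k)$ as an explicit $\N$-linear combination of irreducible characters $\chi(V(\lambda))$, and substituting into (\ref{decomp}) expresses $d_\zeta(W^{(i)}_k)$ as an explicit sum of quantum Weyl products $\prod_{\beta\in\Phi^+}[(\lambda+\rho\mid\beta)]_\zeta/[(\rho\mid\beta)]_\zeta$. Because $\zeta=\exp(i\pi/l)$ with $l=\ell+h$, these products are (anti-)invariant under the level-$\ell$ dot-action of the affine Weyl group $\widehat W$, and they vanish whenever $\lambda+\rho$ lies on a wall of the level-$\ell$ fundamental alcove. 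Grouping the summands of Chari's formula into $\widehat W$-orbits---the technique used by Lee in type $D$---yields the required cancellations and hence vanishing (i) on $\llbracket \ell+1, l-1\rrbracket$ and symmetry (ii) at these extremal nodes. Properties (i) and (ii) then propagate to every other node by rewriting (\ref{eq1}) as
\begin{equation*}
\prod_{j\sim i} Q^{(j)}_k = \left(Q^{(i)}_k\right)^2 - Q^{(i)}_{k-1}\,Q^{(i)}_{k+1},
\end{equation*}
starting from a leaf of the Dynkin diagram and working inward along the legs of $E_n$.

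For positivity (iii) and monotonicity (iv), the tool of choice is log-concavity of the sequence $k\mapsto Q^{(i)}_k$: once log-concavity is established at an extremal node, it combines with the boundary value $Q^{(i)}_0=1$ and the symmetry (ii) to force all intermediate values to be strictly positive, after which (iv) is immediate. Log-concavity at an extremal node is established by a direct analysis of the explicit Chari expression. The delicate point is transferring positivity through the displayed equation to neighbouring nodes: this propagation closes up in $E_6$, where the Dynkin labels along each leg remain small enough that log-concavity can be carried all the way to the branch vertex, but it breaks down in $E_7$ and $E_8$ at the interior nodes where $a_i\ge 3$, for which neither a tractable character formula nor a direct log-concavity estimate is available. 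This failure is precisely the reason why positivity in $E_7$ and $E_8$ is only established for a proper subset of the Dynkin diagram, and it is the main obstacle I expect to face.
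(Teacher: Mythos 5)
Your strategy coincides with the paper's in every essential respect: reduce the restricted system to the boundary condition $Q^{(i)}_\ell=1$ via symmetry, establish (i)--(ii) at the extremal nodes from Chari's character formulas combined with the level-$\ell$ dot action of the affine Weyl group (Proposition~\ref{aweyl}), propagate inward through the $Q$-system, and use strict log-concavity together with positivity on the fundamental alcove for (iii), with exactly the same breakdown at the interior nodes of $E_7$ and $E_8$ where $a_i\ge 3$. The one point you gloss over is that (iv) is not immediate from (iii): the paper obtains it by identifying the resulting solution with the unique positive solution and invoking Lee's unimodality theorem (Theorem~\ref{leeQ}), or, at node $1$ of $E_8$, by a direct argument showing the increments $T_k$ are positive.
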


\subsection{Main result}
Let $\g$ be of exceptional type $E$.
We follow the convention of \cite{B} for numbering the vertices of its Dynkin diagram $\delta$
(see Figure~\ref{fig:Dynkin} below).  
Recall that $h=12$ in type $E_6$, $h=18$ in type $E_7$, and $h=30$ in type $E_8$.
The aim of this paper is to prove the following: 
\begin{theorem}\label{knsE678}
The collection of real numbers
\[
Q^{(i)}_k := d_\zeta\left(W^{(i)}_k\right),\qquad (i\in I,\ 0\le k \le \ell),
\]
is a solution of the $\ell$-restricted $Q$-system. Moreover:
\begin{itemize}
 \item[(a)] In type $E_6$, properties (i), (ii), (iii), (iv) of Conjecture \ref{KNS-conj} hold for any $i\in I$.
We also have
\[
d_\zeta\left(W^{(i)}_{k+l}\right) = d_\zeta\left(W^{(i)}_k\right), \qquad (k\in\N).
\]
 \item[(b)] In type $E_7$, properties (i), (ii) of Conjecture \ref{KNS-conj} hold for any $i\in I$,  
property (iii) holds for $i= 1,2,3,6,7$, and property (iv) holds for $i=1,2,7$.
We also have 
\[
\begin{array}{cccl}
d_\zeta\left(W^{(i)}_{k+l}\right) &=& d_\zeta\left(W^{(i)}_k\right), & (k\in\N,\ i=1,3,4,6),\\[3mm]
d_\zeta\left(W^{(i)}_{k+l}\right) &=& -d_\zeta\left(W^{(i)}_k\right),& (k\in\N,\ i = 2,5,7).
\end{array}
\]
 \item[(c)] In type $E_8$, property (i) of Conjecture \ref{KNS-conj} holds for any $i\in I$,
property (ii) holds for $i= 1,3,4,5,6,7,8$, property (iii) holds for $i= 1,3,8$,
and property (iv) holds for $i=1,8$.
We also have for any $i\in I$
\[
d_\zeta\left(W^{(i)}_{k+l}\right) = d_\zeta\left(W^{(i)}_k\right), \qquad (k\in\N).
\]
\end{itemize}
\end{theorem}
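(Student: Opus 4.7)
The plan is to exploit the fact that, by Nakajima's proof of the Kirillov--Reshetikhin conjecture cited above, the characters $\chi(W^{(i)}_k)$ already satisfy the unrestricted $Q$-system~(\ref{eq1})--(\ref{eq2}) in $\Z[P]$. Since $V\mapsto d_\zeta(V)$ is additive and multiplicative, the numbers $d_\zeta(W^{(i)}_k)$ satisfy the same relations in $\C$. To upgrade them to a solution of the $\ell$-restricted $Q$-system, it therefore suffices to prove property~(i) together with the boundary identity $Q^{(i)}_\ell=1$; the latter follows from (i) combined with (ii) and $Q^{(i)}_0=1$. Properties (iii) and (iv) are then handled as separate positivity and monotonicity statements.

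The central tool for (i), (ii) and the (anti)periodicity $d_\zeta(W^{(i)}_{k+l})=\pm\,d_\zeta(W^{(i)}_k)$ is the shifted action of the affine Weyl group $\widehat{W}$ at level $l$ on the weight lattice, already exploited by Lee in type~$D$. For $\zeta=\exp(i\pi/l)$ one has $[m]_\zeta=0$ iff $l\mid m$, so the quantum Weyl formula~(\ref{eq-zeta}) implies that $d_\zeta(\la)$ changes by the sign $\epsilon(w)$ under any shifted reflection $w$ whose wall avoids $\la+\rho$, and vanishes as soon as $\la+\rho$ lies on a wall. Once the branching multiplicities $a^{(i)}_k(\la)$ in~(\ref{krm}) are known explicitly, properties (i), (ii), and the (anti)periodicity reduce to combinatorial verifications matching the dominant weights in $\chi(W^{(i)}_k)$ inside $\widehat{W}$-orbits with appropriate signs.

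I would carry this out first at the \emph{extremal} nodes of $\delta$ for which Chari provides explicit character formulas for $W^{(i)}_k$ as a $U_q(\g)$-module; these are the nodes attached to minuscule or adjoint fundamental representations, and in particular they cover the leaves of the Dynkin diagrams of $E_6$, $E_7$, $E_8$. At these nodes, the dominant weights occurring in $\chi(W^{(i)}_k)$ form an explicit saturated set, and (i), (ii), the (anti)periodicity, together with the sign in type $E_7$ (where some $\la$ satisfy $(\la+\rho\mid\theta)\equiv h\pmod{2l}$ rather than $0\pmod{l}$), all follow by direct calculation. The remaining nodes are then handled by propagation along $\delta$ through the $Q$-system~(\ref{eq1}): with the values at every neighbour and at one adjacent level already under control, (\ref{eq1}) determines $Q^{(j)}_k$, and the corresponding properties transfer accordingly. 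This propagation is carried out case by case in each of $E_6$, $E_7$, $E_8$.

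The main obstacle is positivity (iii) and strict monotonicity (iv) at interior nodes with large Dynkin label $a_i$. There no convenient character formula is available, so positivity cannot be read off~(\ref{decomp}) directly. The plan is to first establish log-concavity of the sequences $\bigl(d_\zeta(W^{(i)}_k)\bigr)_{0\le k\le\lfloor\ell/2\rfloor}$ at the extremal nodes — positivity being evident for small $k$ and then extending by log-concavity up to the symmetry point provided by~(ii) — and then to propagate positivity through~(\ref{eq1}): once the neighbours of $j$ are positive and one of the adjacent values $Q^{(j)}_{k\pm 1}$ has a known sign, the sign of $Q^{(j)}_k$ is forced. This propagation reaches every node in $E_6$, but stalls at interior nodes with $a_i>2$ in types $E_7$ and $E_8$, which accounts for the weaker conclusions in parts~(b) and~(c). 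I expect the log-concavity step at the extremal nodes, which rests on a delicate analysis of the $\zeta$-dimensions, to be the technically hardest ingredient.
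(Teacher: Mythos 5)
Your proposal follows essentially the same route as the paper: Nakajima's theorem plus multiplicativity of $d_\zeta$ for the unrestricted system, the level-$l$ dot action of $\widehat{W}$ for the vanishing, symmetry and (anti)periodicity properties, Chari's character formulas at the extremal nodes, strict log-concavity of the $\zeta$-dimension sequences to obtain positivity, and propagation to the remaining nodes via the $Q$-system, stalling exactly where the Dynkin labels exceed $2$. The only shift of emphasis is that in the paper the log-concavity at extremal nodes is the easy part (an elementary trigonometric inequality on each factor of the Weyl product), while the genuinely laborious ingredient is the node-by-node inspection of positive roots needed for the cancellations at node $2$ of $E_7$ and node $1$ of $E_8$.
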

The proof of Theorem~\ref{knsE678} will be given in \S\ref{proofE6}, \S\ref{proofE7}
and \S\ref{proofE8}. 

\begin{remark}
{\rm
Extensive computer calculations give evidence that the missing properties 
in types $E_7$ and $E_8$ also hold. To prove the positivity property at all nodes
in type $E_7$ and $E_8$, we would have to show that certain explicit sequences 
of real algebraic numbers are $k$-log-concave for some small values of $k$
(see below Remark~\ref{remark-inf-logc} for type $E_7$). 
In fact we conjecture that these sequences are $\infty$-log-concave.
}
\end{remark}

\begin{figure}[!t]
\begin{picture}(40,150)(-40,-200)
\put(0,-50){
\put(0,0){\circle{6}}
\put(20,0){\circle{6}}
\put(40,0){\circle{6}}
\put(60,0){\circle{6}}
\put(80,0){\circle{6}}
\put(40,20){\circle{6}}
\drawline(3,0)(17,0)
\drawline(23,0)(37,0)
\drawline(43,0)(57,0)
\drawline(40,3)(40,17)
\drawline(63,0)(77,0)
\put(-30,-2){$E_6$}
\put(-2,-15){\small $1$}
\put(18,-15){\small $3$}
\put(38,-15){\small $4$}
\put(58,-15){\small $5$}
\put(78,-15){\small $6$}
\put(50,18){\small $2$}
}
\put(0,-110){
\put(0,0){\circle{6}}
\put(20,0){\circle{6}}
\put(40,0){\circle{6}}
\put(60,0){\circle{6}}
\put(80,0){\circle{6}}
\put(100,0){\circle{6}}
\put(40,20){\circle{6}}
\drawline(3,0)(17,0)
\drawline(23,0)(37,0)
\drawline(43,0)(57,0)
\drawline(40,3)(40,17)
\drawline(63,0)(77,0)
\drawline(83,0)(97,0)
\put(-30,-2){$E_7$}
\put(-2,-15){\small $1$}
\put(18,-15){\small $3$}
\put(38,-15){\small $4$}
\put(58,-15){\small $5$}
\put(78,-15){\small $6$}
\put(98,-15){\small $7$}
\put(50,18){\small $2$}
}
\put(0,-170){
\put(0,0){\circle{6}}
\put(20,0){\circle{6}}
\put(40,0){\circle{6}}
\put(60,0){\circle{6}}
\put(80,0){\circle{6}}
\put(100,0){\circle{6}}
\put(120,0){\circle{6}}
\put(40,20){\circle{6}}
\drawline(3,0)(17,0)
\drawline(23,0)(37,0)
\drawline(43,0)(57,0)
\drawline(40,3)(40,17)
\drawline(63,0)(77,0)
\drawline(83,0)(97,0)
\drawline(103,0)(117,0)
\put(-30,-2){$E_8$}
\put(-2,-15){\small $1$}
\put(18,-15){\small $3$}
\put(38,-15){\small $4$}
\put(58,-15){\small $5$}
\put(78,-15){\small $6$}
\put(98,-15){\small $7$}
\put(118,-15){\small $8$}
\put(50,18){\small $2$}
}
\end{picture}
\caption{The Dynkin diagrams for ${\mathfrak g}=E_6,E_7,E_8$ and their node numberings}
\label{fig:Dynkin}
\end{figure}
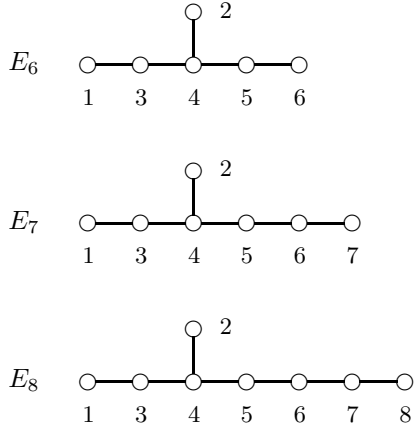

\section{Proofs}

\subsection{Formulas for characters of Kirillov-Reshetikhin modules}

Although there is no general explicit formula for the coefficients $a_k^{(i)}(\lambda)$ of (\ref{krm}) in type $E$, 
for some particular vertices $i\in I$, closed formulas are known. 
We shall use the following formulas of Chari \cite[Section 3]{Cha}.  
For convenience, we use the shorthand notation $\chi(\lambda)$ instead of $\chi(V(\lambda))$.

\medskip

Type $E_6$: 
\begin{eqnarray}\label{chari-E6}
&&\chi(W_k^{(i)}) = \chi (k\varpi_i) \quad \mbox{for } i=1,6, \\
&&\chi(W_k^{(2)}) = \displaystyle \sum_{r=0}^k \chi( r\varpi_2).\label{chari-E6-2}
\end{eqnarray}

Type $E_7$:
\begin{eqnarray}\label{chari-E7}
&&\chi(W_k^{(1)})= \displaystyle \sum_{r=0}^k \chi(r\varpi_1),\\
&&\chi(W_k^{(2)})= \displaystyle \sum_{r=0}^k \chi( r \varpi_2+(k-r)\varpi_7),\\
&&\chi(W_k^{(7)})= \chi( k\varpi_7).
\end{eqnarray}

Type $E_8$:
\begin{eqnarray}\label{chari-E8}
&&\chi(W_k^{(1)})= \displaystyle \sum_{r+s=0}^k \chi(r\varpi_1 + s\varpi_8),\\
&&\chi(W_k^{(8)})= \displaystyle \sum_{r=0}^k \chi( r \varpi_8).\label{chari-E88}
\end{eqnarray}

\subsection{Affine Weyl group action}

Let $W$ be the Weyl group of $\gg$ and let $s_i \:(i\in I)$ be its Coxeter generators. The action of $W$ on the weight lattice $P$ is given by
\begin{equation}
s_i\lambda = \lambda - (\lambda \mid\alpha_i)\alpha_i.
\end{equation} 

Let $\hat{W}$ be the corresponding affine Weyl group, and let 
 $s_0,s_i \:(i\in I)$ be its Coxeter generators. For every $l\in\mathbb{Z}^*$, we can extend the action of $W$ on $P$ into a \emph{level $l$} action of $\hat{W}$, defined as follows.
 The level $l$ action of $s_0$ on $P$ is given by \begin{equation}
 s_0\lambda = s_\theta \lambda + l\theta,\end{equation} where $\theta$ is the highest root and \begin{equation}
 s_\theta \lambda = \lambda - (\lambda \mid\theta)\theta.\end{equation}  The level $l$ action of each $s_i\: (i\in I)$ is the natural action of $W$. From now on, the level $l$ will be fixed as in (\ref{not}).

We can shift the level $l$ action of $\hat{W}$ on $P$ by  $\rho$. This creates the \emph{dot action} of $\hat{W}$ on $P$, defined by \begin{equation}
 w\cdot \lambda = w(\lambda+\rho)-\rho,
\end{equation} for all $w\in \hat{W}$ and $\lambda \in P$. 

The following useful result has been used by Lee \cite{Lee}\cite{Lee2} in his proof of the KNS conjecture for classical types; see also \cite[Section 16.3]{FMS}.

\begin{proposition}\label{aweyl}
Let $\lambda\in P_+$ and $w\in\hat{W}$ such that $w\cdot \lambda \in P_+$.
Let $l$ and $\zeta$ be as in (\ref{not}). Then we have
\[
d_\zeta(\lambda)= (-1)^{\ell(w)} d_\zeta (w\cdot \lambda),
\]
where $\ell(w)$ is the length of $w$ in $\hat{W}$.
\end{proposition}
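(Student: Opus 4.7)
The natural strategy is to strip the denominator and work directly with the antisymmetric numerator. Define, for any $\mu\in P$,
\[
F(\mu) := \prod_{\beta\in\Phi^+}[(\mu\mid\beta)]_\zeta,
\]
so that $d_\zeta(\lambda) = F(\lambda+\rho)/F(\rho)$. Since the dot action satisfies $w\cdot\lambda+\rho = w(\lambda+\rho)$, it suffices to prove that $F(w\mu) = (-1)^{\ell(w)} F(\mu)$ for every $\mu\in P$ and every $w\in\hat W$, where $w$ acts by the level $l$ action. By induction on the length, I only need to verify the identity $F(s_j\mu) = -F(\mu)$ for each Coxeter generator $s_j$ ($j\in I\cup\{0\}$).

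For the finite simple reflections $s_i$ ($i\in I$), the argument is the textbook one: since $s_i$ permutes $\Phi^+\setminus\{\alpha_i\}$ and sends $\alpha_i$ to $-\alpha_i$, the substitution $\beta\mapsto s_i\beta$ in $F(s_i\mu)=\prod_\beta[(\mu\mid s_i\beta)]_\zeta$ reshuffles all factors except one, which becomes $[-(\mu\mid\alpha_i)]_\zeta = -[(\mu\mid\alpha_i)]_\zeta$, producing the sign.

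The substantive step is the affine reflection $s_0$. Here $s_0\mu = s_\theta\mu + l\theta$, so $(s_0\mu\mid\beta) = (\mu\mid s_\theta\beta) + l(\theta\mid\beta)$, and I exploit the two key identities
\[
[k+l]_\zeta = -[k]_\zeta,\qquad [k+2l]_\zeta = [k]_\zeta,
\]
which follow from $\zeta^l = -1$. In the simply-laced case, the positive roots split into three parts according to $(\theta\mid\beta)\in\{0,1,2\}$: the roots orthogonal to $\theta$ are fixed by $s_\theta$ and their factors are unchanged; the unique root $\beta=\theta$ gives $[-(\mu\mid\theta)+2l]_\zeta = -[(\mu\mid\theta)]_\zeta$; and for each $\beta$ in the set $\Phi^+_1 = \{\beta : (\theta\mid\beta)=1\}$, the factor becomes $[(\mu\mid\beta-\theta)+l]_\zeta = -[(\mu\mid\beta-\theta)]_\zeta$. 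The involution $\beta\mapsto\theta-\beta$ on $\Phi^+_1$ (fixed-point free since $\theta/2\notin\Phi$) then repackages $\prod_{\beta\in\Phi^+_1}[(\mu\mid\beta-\theta)]_\zeta$ into $(-1)^{|\Phi^+_1|}\prod_{\beta\in\Phi^+_1}[(\mu\mid\beta)]_\zeta$, so the two factors of $(-1)^{|\Phi^+_1|}$ cancel and the single remaining sign from $\beta=\theta$ yields $F(s_0\mu) = -F(\mu)$.

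The anticipated obstacle is the $s_0$ computation, because one has to keep careful track of three independent sign sources (the periodicity of $[\,\cdot\,]_\zeta$, the negation $[-k]_\zeta=-[k]_\zeta$, and the parity $|\Phi^+_1|$) and check that the pairing $\beta\leftrightarrow\theta-\beta$ is really the correct bijection identifying $s_\theta(\Phi^+_1)=-\Phi^+_1$. Once this is done, the proposition follows by induction on $\ell(w)$: writing $w = s_{j_1}\cdots s_{j_r}$ reduced, repeated application of $F(s_j\mu)=-F(\mu)$ gives $F(w(\lambda+\rho)) = (-1)^r F(\lambda+\rho)$, which divided by $F(\rho)$ is exactly the claim. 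The hypothesis $w\cdot\lambda\in P_+$ plays no role in the identity itself; it only ensures that $d_\zeta(w\cdot\lambda)$ is unambiguously defined by the product formula \eqref{eq-zeta}.
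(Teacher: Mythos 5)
Your argument is correct and complete. Note that the paper itself does not prove Proposition~\ref{aweyl}: it is quoted from the literature (Lee's work and \cite[Section 16.3]{FMS}), so there is no internal proof to compare against. Your blind proof supplies exactly the standard argument those references rely on, namely the antisymmetry of $F(\mu)=\prod_{\beta\in\Phi^+}[(\mu\mid\beta)]_\zeta$ under the level $l$ action, reduced to the generators. The finite reflections are handled by the textbook permutation of $\Phi^+\setminus\{\alpha_i\}$, and your $s_0$ computation is the one place where real care is needed; it checks out. Writing $(s_0\mu\mid\beta)=(\mu\mid s_\theta\beta)+l(\theta\mid\beta)$ and splitting $\Phi^+$ by the value of $(\theta\mid\beta)\in\{0,1,2\}$ (valid in the simply-laced case, which is all the paper needs), the factor at $\beta=\theta$ contributes the single surviving sign via $[k+2l]_\zeta=[k]_\zeta$ and $[-k]_\zeta=-[k]_\zeta$, while on $\Phi^+_1$ the sign $(-1)^{|\Phi^+_1|}$ from $[k+l]_\zeta=-[k]_\zeta$ is cancelled by the sign produced when the bijection $\beta\mapsto\theta-\beta$ converts $[(\mu\mid\beta-\theta)]_\zeta$ into $-[(\mu\mid\theta-\beta)]_\zeta$; for this reindexing only bijectivity of $\beta\mapsto\theta-\beta$ on $\Phi^+_1$ is needed, so the fixed-point-freeness remark is harmless but superfluous. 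Your closing observations are also accurate: the induction on $\ell(w)$ goes through since any reduced word gives $(-1)^{\ell(w)}$, and the dominance hypotheses on $\lambda$ and $w\cdot\lambda$ are irrelevant to the identity because $d_\zeta$ is defined by the product formula (\ref{eq-zeta}) for arbitrary weights.
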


Let us denote by $C_\ell^0$ the fundamental alcove of $P$ for the dot action of $\hat{W}$
of level $l=\ell+h$, that is, 
$$C_\ell^0 := \left\{ \lambda = \displaystyle \sum_{i\in I} \lambda_i \varpi_i\in P : 
\lambda_i \geq 0, \: \displaystyle \sum_{i\in I} a_i \lambda_i \leq \ell\right\}.$$
We cite the following result from Lee's thesis \cite{Leethesis}. 

\begin{proposition}[{\cite[Theorem 6.5.2]{Leethesis}}]\label{zeta+}
We have $d_\zeta(\lambda)>0$ for all $\lambda \in C_\ell^0$.
\end{proposition}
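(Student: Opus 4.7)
The plan is to rewrite $d_\zeta(\lambda)$ as a product of sines and then show that both numerator and denominator of each factor are strictly positive for $\lambda \in C_\ell^0$.

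Substituting $\zeta = \exp(i\pi/l)$ into the definition of $[k]_\zeta$ gives $[k]_\zeta = \sin(k\pi/l)/\sin(\pi/l)$, so (\ref{eq-zeta}) becomes
\[
d_\zeta(\lambda) = \prod_{\beta \in \Phi^+} \frac{\sin\bigl(\pi (\lambda+\rho \mid \beta)/l\bigr)}{\sin\bigl(\pi (\rho \mid \beta)/l\bigr)}.
\]
It then suffices to prove that $(\lambda+\rho \mid \beta)$ and $(\rho \mid \beta)$ both lie in the open interval $(0,l)$ for every positive root $\beta$.

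For the denominator, the normalization $(\varpi_i\mid \alpha_j) = \delta_{ij}$ implies that for any $\beta = \sum_i n_i \alpha_i \in \Phi^+$, we have $(\rho \mid \beta) = \sum_i n_i = \mathrm{ht}(\beta)$. This height is at least $1$, and attains its maximum at the highest root $\theta$ where, by (\ref{eq-Coxeter}), it equals $h - 1 < l$. Hence $\sin(\pi(\rho\mid\beta)/l) > 0$.

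For the numerator, I would use the classical fact that in a simply-laced root system every positive root is dominated by the highest root: if $\beta = \sum_i n_i \alpha_i \in \Phi^+$, then $n_i \leq a_i$ for all $i \in I$. Writing $\lambda = \sum_i \lambda_i \varpi_i \in C_\ell^0$, we then obtain
\[
(\lambda \mid \beta) = \sum_{i\in I} n_i \lambda_i \;\leq\; \sum_{i\in I} a_i \lambda_i \;\leq\; \ell,
\]
the last inequality being the defining condition of $C_\ell^0$. Combined with the bound $(\rho\mid\beta) \leq h-1$, this yields $0 < (\lambda+\rho\mid\beta) \leq \ell + h - 1 = l - 1 < l$, so $\sin(\pi(\lambda+\rho\mid\beta)/l) > 0$. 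The strict lower bound comes from $(\rho\mid\beta) \geq 1$ and $(\lambda\mid\beta) \geq 0$. Therefore $d_\zeta(\lambda)$ is a product of strictly positive real numbers, and the conclusion follows.

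There is no serious obstacle here; the only non-trivial input is the dominance of the highest root over all positive roots in the ADE case, which is a well-known and standard fact. Everything else is a direct trigonometric estimate controlled by the definition of the fundamental alcove and the identity $(\rho \mid \theta) = h-1$.
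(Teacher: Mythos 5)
Your argument is correct, but note that the paper does not actually prove Proposition~\ref{zeta+}: it is quoted verbatim from Lee's thesis \cite[Theorem 6.5.2]{Leethesis}, so there is no in-paper proof to compare against. What you have written is a clean, self-contained justification of the cited result. The two ingredients check out: writing $[k]_\zeta=\sin(k\pi/l)/\sin(\pi/l)$ for $\zeta=\exp(i\pi/l)$ turns (\ref{eq-zeta}) into the sine product (this is exactly formula (\ref{sin}) in the paper, up to dropping the factors equal to $1$); the denominator is controlled by $1\le(\rho\mid\beta)\le(\rho\mid\theta)=h-1<l$ via (\ref{eq-Coxeter}); and the numerator is controlled by the dominance $\beta\le\theta$ of every positive root by the highest root, giving $(\lambda\mid\beta)=\sum_i n_i\lambda_i\le\sum_i a_i\lambda_i\le\ell$ for $\lambda\in C_\ell^0$ (here the hypothesis $\lambda_i\ge0$ is what makes the termwise comparison $n_i\lambda_i\le a_i\lambda_i$ legitimate), whence $0<(\lambda+\rho\mid\beta)\le\ell+h-1=l-1<l$. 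Every factor is then a ratio of strictly positive sines. The dominance fact you invoke is standard (Bourbaki, Ch.~VI, \S1, Prop.~25) and holds for any irreducible root system, not only ADE, so your proof is if anything slightly more general than needed here. No gap.
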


\begin{corollary}\label{Qk+}
We have $d_\zeta(W^{(i)}_k) >0$ for $0\leq k\leq \ds\frac{\ell}{a_i}$ and for all $i\in I$.
\end{corollary}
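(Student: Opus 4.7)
The plan is to combine Proposition \ref{zeta+} with the well-known fact that the dominant weights appearing in the decomposition (\ref{krm}) of a Kirillov-Reshetikhin module $W^{(i)}_k$ all lie below the top weight $k\varpi_i$ in the root order, and then to bound $(\lambda\mid\theta)$ to ensure that all such $\lambda$ lie in the fundamental alcove $C_\ell^0$.

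First I would observe that the defining conditions for $C_\ell^0$ can be rephrased intrinsically: for $\lambda=\sum_i\lambda_i\varpi_i\in P_+$ we have $(\lambda\mid\theta)=\sum_i\lambda_i(\varpi_i\mid\theta)=\sum_i a_i\lambda_i$ since $(\varpi_i\mid\alpha_j)=\delta_{ij}$ and $\theta=\sum_j a_j\alpha_j$. Hence $C_\ell^0=\{\lambda\in P_+\,:\,(\lambda\mid\theta)\le\ell\}$. Next, because $\theta$ is the highest root, it is itself a dominant weight, so $(\alpha_j\mid\theta)\ge 0$ for every simple root $\alpha_j$. Consequently the linear form $\mu\mapsto(\mu\mid\theta)$ is monotone on the positive root cone: if $\mu\ge\mu'$ in the root order, then $(\mu\mid\theta)\ge(\mu'\mid\theta)$.

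I would then use the standard fact that, as a $U_q(\g)$-module, $W^{(i)}_k$ has highest weight $k\varpi_i$, so every dominant weight $\lambda$ with $a^{(i)}_k(\lambda)>0$ satisfies $k\varpi_i-\lambda\in Q_+$. By the monotonicity observation this yields
\[
(\lambda\mid\theta)\le(k\varpi_i\mid\theta)=k\,a_i.
\]
Assuming $0\le k\le\ell/a_i$, we conclude $(\lambda\mid\theta)\le\ell$, i.e. $\lambda\in C_\ell^0$. Proposition~\ref{zeta+} then gives $d_\zeta(\lambda)>0$ for every such $\lambda$, and the expansion (\ref{decomp}) together with $a^{(i)}_k(\lambda)\in\N$ and $a^{(i)}_k(k\varpi_i)\ge 1$ yields $d_\zeta(W^{(i)}_k)>0$.

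The argument is short because all the structural work has been done in Proposition~\ref{zeta+}; the only non-tautological ingredient is the monotonicity of $(\,\cdot\,\mid\theta)$ on $Q_+$, which follows from the dominance of the highest root in the simply laced setting. I do not anticipate a real obstacle here, although one should be slightly careful to recall (or cite) that the $U_q(\g)$-highest weight of $W^{(i)}_k$ is indeed $k\varpi_i$, so that the sum in (\ref{decomp}) is supported on $\{\lambda\in P_+\,:\,\lambda\le k\varpi_i\}$.
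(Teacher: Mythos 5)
Your proof is correct and follows essentially the same route as the paper: both arguments reduce to showing that every dominant weight $\lambda\le k\varpi_i$ lies in $C_\ell^0$ and then invoking Proposition~\ref{zeta+} together with the fact that the decomposition (\ref{decomp}) is supported on $\{\lambda\le k\varpi_i\}$. The only cosmetic difference is that you establish the monotonicity of $\lambda\mapsto\sum_i a_i\lambda_i=(\lambda\mid\theta)$ under the root order via the dominance of $\theta$, whereas the paper derives the same inequality from the affine Cartan matrix identity $c_{0j}+\sum_i a_i c_{ij}=0$ with $c_{0j}\le 0$ --- these are the same fact in two guises.
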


\begin{proof}
Recall the usual partial order on weights: 
\[
\la \ge \mu\quad \Longleftrightarrow \quad 
\la - \mu \in \bigoplus_{i\in I} \mathbb{N}\alpha_i.
\]
Let $\lambda= \sum_{i\in I} \lambda_i \varpi_i\in C_\ell^0$, 
and let $\mu=\lambda-\alpha_j= \sum_{i\in I} \mu_i \varpi_i$ for some $j\in I$.
Let $(c_{i,j})_{i,j\in I}$ be the Cartan matrix and $\hat{C}=(c_{i,j})_{i,j\in \{0\} \cup I}$ 
the affine Cartan matrix. 
We have $\alpha_j=\displaystyle \sum_{i\in I} c_{i j}\varpi_i$. Therefore, we get
\begin{equation}
\displaystyle \sum_{i\in I} \mu_i a_i = \displaystyle \sum_{i\in I} \lambda_i a_i - \displaystyle \sum_{i\in I} c_{ij} a_i.
\end{equation}
Now, the following identity is well known:
\begin{equation}
c_{0 j} + \displaystyle \sum_{i=1}^n a_i c_{ij}=0.
\end{equation}
Moreover, since $j\neq 0$, the coefficient $c_{0 j}$ is not on the diagonal of $\hat{C}$, hence $c_{0 j}\leq 0$. 
Therefore, we get
\begin{equation}
\displaystyle \sum_{i\in I} \mu_i a_i = \displaystyle \sum_{i\in I} \lambda_i a_i + c_{0j} 
\leq \displaystyle \sum_{i\in I} \lambda_i a_i \leq \ell.
\end{equation}
It follows that, if $\mu$ is dominant, $\mu\in C_\ell^0$. By induction, we get that if $\la\in C_\ell^0$ and
$\mu\le \la$ is dominant, then $\mu \in C_\ell^0$.

Finally, it is well-known that in the decomposition (\ref{decomp}), we have $a_k^{(i)}(\lambda)\neq0$ 
only if $\lambda \leq k\varpi_i$. 
We can then use Proposition \ref{zeta+} and the fact that $k\varpi_i\in C_\ell^0$ if $k\le \ell/a_i$
to finish our proof. \hfill $\Box$

\end{proof}

\subsection{Log-concave sequences}

\subsubsection{Definitions and basic properties}

A finite sequence of real numbers $(a_k)_{0\leq k \leq n}$ is called \emph{log-concave} if 
\begin{equation}
a_k^2 \geq a_{k-1} a_{k+1},\qquad (1\le k \le n-1).
\end{equation}
It is \emph{strictly} log-concave if $a_k^2 > a_{k-1} a_{k+1}$.
Equivalently, $(a_k)_{0\leq k \leq n}$ is log-concave if and only if 
\begin{equation}\label{logc} 
a_k a_l \geq a_{k+1} a_{l-1},\qquad (1\le k \le l \le n-1).
\end{equation} 
Strict log-concavity can also be characterized by equation (\ref{logc})  with a strict inequality.

Most of the following properties are well-known, see e.g. \cite{Sta},\cite{Wil}.
We include a short proof for the convenience of the reader.

\begin{proposition}\label{plogc}
\begin{enumerate}
\item [(i)]The product $(a_k b_k)_{0\leq k \leq n}$ of two positive log-concave sequences $(a_k)$ and $(b_k)$ is log-concave. Furthermore, if $(b_k)$ is strictly log-concave, then $(a_k b_k)$ is also strictly log-concave.
\item[(ii)] Let  $(a_k)_{0\leq k \leq n}$ be a strictly log-concave and positive sequence such that $a_{n-1}<a_n$. Define two new sequences  $(b_r)_{0\leq r \leq 2n}$ and $(c_r)_{0\leq r \leq 2n+1}$ by setting $b_r=a_r=b_{2n-r}$ and $c_r=a_r=c_{2n+1-r}$ for all $0\leq r \leq n$. Then the positive sequences $(b_r)_{0\leq r \leq 2n}$ and $(c_r)_{0\leq r \leq 2n+1}$ are strictly log-concave.
\item[(iii)]  If $(a_k)_{0\leq k \leq n}$ is strictly log-concave and positive, then the sum sequence $\displaystyle \left(\sum_{k=0}^d a_k\right)_{0\leq d \leq n}$ is strictly log-concave.

\end{enumerate}

\end{proposition}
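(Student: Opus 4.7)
The plan is to handle the three items independently, each by direct manipulation of the defining log-concavity inequalities. For (i), positivity of the sequences lets me multiply the two inequalities $a_k^2 \ge a_{k-1}a_{k+1}$ and $b_k^2 \ge b_{k-1}b_{k+1}$ termwise to obtain $(a_kb_k)^2 \ge (a_{k-1}b_{k-1})(a_{k+1}b_{k+1})$; strict inequality in the $(b_k)$ factor is inherited because the $(a_k)$ factor is strictly positive.

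For (ii), I would check the log-concavity of the extended sequences pointwise. At any index $r$ away from the fold, the inequality is equivalent, via the symmetry $b_{2n-r}=a_r$ (resp.\ $c_{2n+1-r}=a_r$), to strict log-concavity of $(a_k)$ at some interior point. The only genuinely new inequalities appear at the symmetry axis. For the odd-length sequence $(b_r)$, this is the single inequality at $r=n$: $b_n^2 = a_n^2$ while $b_{n-1}b_{n+1} = a_{n-1}^2$, and the assumption $a_n > a_{n-1} > 0$ gives strictness. For the even-length sequence $(c_r)$, the two middle inequalities at $r=n$ and $r=n+1$ both reduce to $a_n^2 > a_{n-1}a_n$, which is again the hypothesis $a_n > a_{n-1}$.

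For (iii), I set $S_d := \sum_{k=0}^d a_k$ and expand $S_d^2 - S_{d-1}S_{d+1}$ using $S_d = S_{d-1}+a_d$ and $S_{d+1}= S_d + a_{d+1}$. After collecting terms and reindexing the resulting sum, this rearranges as
\begin{equation*}
S_d^2 - S_{d-1}S_{d+1} \;=\; a_0 a_d \,+\, \sum_{k=1}^d \bigl(a_k a_d - a_{k-1}a_{d+1}\bigr).
\end{equation*}
Strict log-concavity of $(a_k)$ is equivalent to strict decrease of the ratios $a_{k+1}/a_k$, which yields $a_k a_d \ge a_{k-1}a_{d+1}$ for $1\le k \le d\le n-1$, with strict inequality at $k=d$; combined with the positive term $a_0 a_d$, this gives the desired strict log-concavity of $(S_d)$. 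I do not expect any real obstacle here; (iii) is the only step requiring the small algebraic rearrangement displayed above, and it is the one piece of bookkeeping that is more than a one-line verification.
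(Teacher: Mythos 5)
Your proposal is correct and follows essentially the same route as the paper: (i) by termwise multiplication of the defining inequalities, (ii) by reducing everything to the interior log-concavity of $(a_k)$ except at the fold, where $a_{n-1}<a_n$ supplies the strict inequality, and (iii) by the identity $S_d^2-S_{d-1}S_{d+1}=a_0a_d+\sum_{k=1}^d(a_ka_d-a_{k-1}a_{d+1})$, which is exactly the paper's expansion. No gaps.
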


\begin{proof}
Let $(a_k)$ and $(b_k)$ be two positive log-concave sequences. 
Then 

$$(a_k b_k)^2 \geq (a_{k-1}a_{k+1})(b_{k-1}b_{k+1})=(a_{k-1}b_{k-1})(a_{k+1}b_{k+1}),\quad (0<k<n),$$
hence $(a_k b_k)_{0\leq k\leq n}$ is log-concave. 
If moreover $b_k^2 > b_{k-1}b_{k+1}$, the above inequality is strict and $(a_k b_k)_{0\leq k\leq n}$ 
is strictly log-concave. This proves (i).

Let $(a_k)$ be a strictly log-concave and positive sequence such that $a_{n-1}<a_n$.  Then the sequence $(b_k)_{0\leq k \leq n}$ is strictly log-concave, and so is  $(b_k)_{n+1\leq k \leq 2n}$. Moreover, since $a_{n-1}<a_n$, we have
$$b_n^2 = a_n^2 > a_{n-1}^2 = b_{n-1}b_{n+1},$$
and $(b_n)_{0\leq k \leq 2n}$ is strictly log-concave.

Similarly the subsequences $(c_k)_{0\leq k \leq n}$ and $(c_k)_{n+1\leq k\leq 2n+1}$
are strictly log-concave. We have $c_n=c_{n+1}=a_n$. Moreover, since $a_{n-1}<a_n$, we have
$$c_n^2 =a_n^2 > a_{n-1}a_n = c_{n-1}c_{n+1} \quad \mbox{and}\quad c_{n+1}^2 =a_n^2 > a_{n-1}a_n = c_{n+2}c_n,$$
hence $(c_n)_{0\leq k \leq 2n+1}$ is strictly log-concave, which proves (ii).

Let $(a_k)_{0\leq k \leq n}$ be a strictly log-concave and positive sequence. We have
$$\begin{array}{ll}\left(\displaystyle \sum_{k=0}^{d-1} a_k \right) \left(\displaystyle \sum_{k=0}^{d+1} a_k\right) &= \displaystyle \left(\sum_{k=0}^d a_k\right)^2 -(a_d^2-a_{d+1}a_{d-1})-(a_d a_{d-1}-a_{d+1}a_{d-2})\\\\&-(a_d a_{d-2}-a_{d+1}a_{d-3}) \dots -(a_d a_1-a_{d+1}a_0)-a_d a_0.\end{array}$$
Since $a_d a_0>0$ and $a_d a_r-a_{d+1}a_{r-1}>0$ for all $1\leq r \leq d-1$ by (\ref{logc}), 
this proves (iii). \hfill$\Box$

\end{proof}

\subsubsection{Log-concave sequences and quantum dimensions}

Because of (\ref{not}) and (\ref{eq-zeta}), we know that $d_\zeta(\lambda)$ is a real number, and can be rewritten as
\begin{eqnarray}\label{sin}
d_\zeta(\lambda)= \displaystyle \prod_{\beta \in\Phi^+ : (\lambda|\beta)\neq 0} 
\frac{\sin\left(\ds\frac{\pi}{l}( (\lambda\mid\beta)+(\rho\mid\beta)  )\right)}{\sin\left(\ds\frac{\pi}{l}(\rho\mid\beta)\right)}.
\end{eqnarray}
Obviously, the sequence $d_\zeta(k\varpi_i)_{k\in\mathbb{Z}}$ is  $2l$-periodic.

\begin{proposition}\label{l1.8}

For every $i\in I$, the finite sequence $d_\zeta(k\varpi_i)$ $(0\leq k\leq {\ell}/{a_i})$ is strictly log-concave and positive.

\end{proposition}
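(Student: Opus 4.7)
The plan is to exploit the product formula (\ref{sin}) for $d_\zeta(k\varpi_i)$ by analysing each sine factor separately as a function of $k$, and then combining them via Proposition~\ref{plogc}(i). I will specialize (\ref{sin}) to $\lambda = k\varpi_i$ by introducing, for $\beta\in\Phi^+$, the quantities $n_\beta := (\varpi_i\mid\beta)$ and $m_\beta := (\rho\mid\beta)$. In the simply-laced setting $n_\beta$ is the coefficient of $\alpha_i$ in the expansion of $\beta$ as a sum of simple roots, hence is a nonnegative integer bounded above by the Dynkin label $a_i$, while $m_\beta$ is the height of $\beta$, bounded above by $h-1$. For $k\ge 1$ the condition $(k\varpi_i\mid\beta)\ne 0$ becomes $n_\beta\ne 0$, and the formula rewrites as
\[
d_\zeta(k\varpi_i) \;=\; \prod_{\beta\in\Phi^+,\, n_\beta\ne 0} \frac{\sin\!\bigl(\tfrac{\pi}{l}(kn_\beta + m_\beta)\bigr)}{\sin\!\bigl(\tfrac{\pi}{l}m_\beta\bigr)}.
\]

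The positivity statement is already available: $k\varpi_i$ lies in the fundamental alcove $C_\ell^0$ whenever $0\le k\le \ell/a_i$, so Proposition~\ref{zeta+} gives $d_\zeta(k\varpi_i)>0$. At the factor-by-factor level the same bound yields $0 < kn_\beta + m_\beta \le \ell + (h-1) < l$, so every sine appearing in the product above is strictly positive.

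For the log-concavity I fix $\beta$ and study the auxiliary sequence $u_k := \sin\!\bigl(\tfrac{\pi}{l}(kn_\beta + m_\beta)\bigr)$. The product-to-sum identity $\sin(A+B)\sin(A-B) = \sin^2 A - \sin^2 B$ yields the clean equality
\[
u_k^2 - u_{k-1}u_{k+1} \;=\; \sin^2\!\bigl(\tfrac{\pi n_\beta}{l}\bigr),
\]
and the right-hand side is strictly positive because $0 < n_\beta \le a_i < h \le l$ forces $\tfrac{\pi n_\beta}{l}\in (0,\pi)$. Thus each $u_k$-sequence is strictly log-concave, and dividing by the positive constant $\sin(\pi m_\beta/l)$ preserves this property. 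Every factor in the product above is therefore a positive, strictly log-concave sequence in $k$.

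To conclude, I will invoke Proposition~\ref{plogc}(i) iteratively: the product of finitely many positive log-concave sequences is log-concave, and as soon as at least one is strictly log-concave the product is strictly log-concave. Since every factor here is strictly log-concave, $d_\zeta(k\varpi_i)$ is strictly log-concave and positive for $0\le k \le \ell/a_i$, which is exactly the claim. The argument is essentially a trigonometric identity combined with the positivity input from Proposition~\ref{zeta+}; the only place where anything type-specific intervenes is the bound $n_\beta \le a_i$, which is a general feature of ADE root systems rather than a genuine obstacle, so I do not expect any serious difficulty in carrying out the plan.
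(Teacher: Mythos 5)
Your proposal is correct and follows essentially the same route as the paper: positivity via Proposition~\ref{zeta+}, factorwise strict log-concavity of the sine sequences via the identity $\sin(A+B)\sin(A-B)=\sin^2A-\sin^2B$ (the paper writes the same quantity as $\tfrac{1}{2}(1-\cos\tfrac{2\pi}{l}(\varpi_i\mid\beta))$ and only invokes strictness for $\beta=\theta$), and then Proposition~\ref{plogc}(i). Your explicit check that $0<kn_\beta+m_\beta<l$, so that each factor sequence is positive before applying Proposition~\ref{plogc}(i), is a small point the paper leaves implicit.
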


\begin{proof}
The positivity follows immediately from Proposition \ref{zeta+}.
Define, for $\beta\in \Phi^+$,
\begin{equation}
a_{k,\beta}:=\sin\frac{\pi}{l}(k(\varpi_i\mid\beta)+(\rho\mid\beta)),\qquad (0\le k \le {\ell}/{a_i}).
\end{equation}
We have
\begin{equation}\label{lc}
a_{k,\beta}^2 - a_{k-1,\beta}a_{k+1,\beta} = \displaystyle \frac{1-\cos\frac{2\pi}{l}(\varpi_i\mid\beta)}{2}\ge 0,
\qquad (0< k < {\ell}/{a_i}).
\end{equation}
Therefore, $(a_{k,\beta })_{0\leq k \leq {\ell}/{a_i}}$ is log-concave. 
Moreover for $\beta = \theta$ the inequality (\ref{lc}) is strict, 
hence, by Proposition \ref{plogc}(i), the sequence
$(d_\zeta(k\varpi_i))_{0\leq k \leq {\ell}/{a_i}}$ is  strictly log-concave.
\hfill $\Box$

\end{proof}

\subsection{Miscellaneous results on root systems and $Q$-systems}

For every $i\in I$, we denote by $\delta_i$ the number of positive roots $\beta$ such that 
$(\varpi_i|\beta)$ is odd. 
By inspecting the list of positive roots, we see that $\delta_i$ is even for all $i\in I$ in type $E_6$. 
In type $E_7$, $\delta_2$, $\delta_5$, $\delta_7$ are odd, and $\delta_1$, $\delta_3$, $\delta_4$, $\delta_6$ are even.
In type $E_8$, $\delta_i$ is even for all $i\in I$.

The following useful lemma can also be checked by inspection of the list of positive roots in type $E$.
For a general proof, see \cite[Proposition 3.8]{Lee2}:
\begin{lemma}\label{lee}
Let $i\in I$ and $r\in \llbracket 1,h-1\rrbracket$. There exists $\beta\in\Phi^+$ such that
$(\varpi_i\mid \beta) = 1$ and $(\rho\mid \beta) = r$.
\end{lemma}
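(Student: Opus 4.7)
The plan is to translate the pairing conditions into purely combinatorial data about positive roots, and then verify the resulting statement by inspection of the root system in each exceptional type.

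First I would unpack the pairings. Given the normalisation $(\alpha_i\mid\varpi_j)=\delta_{ij}$, any positive root $\beta=\sum_{j\in I}n_j\alpha_j$ satisfies $(\varpi_i\mid\beta)=n_i$. Since $\rho=\sum_{k}\varpi_k$, we also have $(\rho\mid\alpha_j)=\sum_k\delta_{kj}=1$ for every $j$, hence $(\rho\mid\beta)=\sum_j n_j=\operatorname{ht}(\beta)$, the height of $\beta$. The lemma therefore becomes the following purely combinatorial claim: for every vertex $i\in I$ and every integer $r\in\llbracket 1,h-1\rrbracket$, there is a positive root of height $r$ whose coefficient of $\alpha_i$ is equal to $1$.

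The cleanest route, given the sentence preceding the lemma and the acknowledgement of P. Browne for the tables of positive roots, is direct inspection. Concretely, for each of $E_6,E_7,E_8$ one organises the positive roots by height, then for each vertex $i$ one selects those $\beta$ with $n_i=1$ and checks that the multiset of their heights covers $\{1,2,\dots,h-1\}$. Since $\alpha_i$ itself has height $1$ and $n_i=1$, and since the roots $\alpha_i+\alpha_j$ for each neighbour $j$ of $i$ yield several heights $2$, low heights are automatic; the meaningful verification is for large $r$.

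As an alternative, one can give a more conceptual argument by building a chain of positive roots with $n_i=1$ of consecutive heights: starting from $\alpha_i$, at each step add a simple root $\alpha_j$ with $j\neq i$ such that $\beta+\alpha_j\in\Phi^+$; in the simply-laced setting this is equivalent to $(\beta,\alpha_j)<0$, and the support of $\beta$ being connected together with the structure of the neighbourhood of $i$ in $\delta$ guarantees that such a $j\neq i$ can be found. One can also merge several such chains (for instance those produced by starting with $\alpha_i+\alpha_{j_1}$ or $\alpha_i+\alpha_{j_2}$ for distinct neighbours $j_1,j_2$ of $i$) to cover every height up to $h-1$.

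The principal obstacle is reaching the highest heights for vertices $i$ with large Dynkin label $a_i$ (in particular $a_4=3$ in $E_6$, $a_4=4$ in $E_7$, $a_4=6$ in $E_8$), where among the very few positive roots of height close to $h-1$ the coefficient $n_i$ tends to be forced to be larger than $1$. For these cases the inductive construction has to be supplemented, or one simply exhibits the required roots explicitly from the tables; this tedious but finite case analysis is where the direct verification is unavoidable, and is precisely what the statement "can also be checked by inspection of the list of positive roots in type $E$" is meant to cover.
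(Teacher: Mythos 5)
Your reduction is correct and is exactly what the paper has in mind: with the normalisation $(\alpha_i\mid\varpi_j)=\delta_{ij}$, the quantity $(\varpi_i\mid\beta)$ is the coefficient $n_i$ of $\alpha_i$ in $\beta$ and $(\rho\mid\beta)$ is the height, so the lemma asks for a positive root of height $r$ whose $\alpha_i$-coefficient equals $1$. The paper itself offers no more of a proof than ``check the tables'' together with a citation of Lee, so in spirit you and the paper take the same route of direct inspection.

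The genuine problem is the step you defer to the tables. For the cases you yourself single out as ``the principal obstacle'' --- vertices $i$ with $a_i\ge 2$ and $r$ close to $h-1$ --- the required roots do not exist, so no amount of inspection will produce them: the statement as written is false there. Concretely, the unique positive root of height $h-1$ is the highest root $\theta=\sum_j a_j\alpha_j$, whose $\alpha_i$-coefficient is $a_i$; hence for any $i$ with $a_i\ge 2$ (e.g.\ $i=2$ in $E_6$, where $a_2=2$ and $h-1=11$, or $i=4$ in $E_8$, where $a_4=6$ and $h-1=29$) there is no $\beta\in\Phi^+$ with $(\varpi_i\mid\beta)=1$ and $(\rho\mid\beta)=h-1$. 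More generally, the maximal height of a positive root with $n_i=1$ is strictly below $h-1$ whenever $a_i\ge 2$, so a whole top range of values of $r$ fails, and your chain construction hits the same wall: every chain of roots with $n_i=1$ terminates before height $h-1$. What your verification would actually establish is that the claim holds in the full range $r\in\llbracket 1,h-1\rrbracket$ exactly at the nodes with $a_i=1$ (where the chain argument from $\alpha_i$ up to $\theta$ works cleanly, since $n_i$ is nondecreasing along the chain and ends at $a_i=1$), and in the range $r\in\llbracket 1,h-2\rrbracket$ at the adjoint node with $\varpi_i=\theta$ --- which happens to be all the paper ever uses. A correct write-up must therefore restrict the statement (or reformulate it as in the cited proposition of Lee) rather than promise a case analysis that cannot be completed.
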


We also quote the following result of Lee, which will be used in type $E_6$ for proving property (iv) of Conjecture \ref{KNS-conj}.
\begin{theorem}[{\cite[Theorem 5.3.6]{Leethesis}}]\label{leeQ}
Let $\mathbf{Q}=(Q_k^{(i)})_{0\leq k \leq \ell}$ be the unique positive solution of a level $\ell$-restricted $Q$-system.  
Then $\mathbf{Q}$ satisfies the following properties:
\begin{enumerate}
\item [(i)]  (symmetry) $Q_{\ell-k}^{(i)}=Q_k^{(i)}$ for all $i \in I$ and $k\in\llbracket 0,\ell\rrbracket$.
\item [(ii)] (unimodality) $Q_k^{(i)} < Q_{k+1}^{(i)}$ for $k\in \llbracket 0,\lfloor \ell/2\rfloor-1\rrbracket$.

\end{enumerate}

\end{theorem}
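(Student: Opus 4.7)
The plan is a bootstrap: first verify the theorem at the extremal nodes of the Dynkin diagram (those for which Chari's explicit character formulas (\ref{chari-E6})--(\ref{chari-E88}) are available), and then propagate the information along the diagram by reading the $Q$-system (\ref{eq1}) as a spatial recursion. Since $(d_\zeta(W^{(i)}_k))_{i,k}$ already solves the unrestricted $Q$-system by Nakajima's theorem applied to the decomposition (\ref{decomp}), to obtain an $\ell$-restricted solution it suffices to check $Q^{(i)}_\ell=1$ for every $i$, and this will follow from the symmetry (ii) combined with $Q^{(i)}_0=1$.

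For (i) and (ii) at an extremal node $i$, Chari's formulas write $\chi(W^{(i)}_k)$ as a short sum $\sum_\mu \chi(\mu)$ in which each $\mu$ is supported on at most two fundamental weights. Lemma \ref{lee} produces, for any prescribed $r\in\llbracket 1,h-1\rrbracket$, a positive root $\beta$ with $(\varpi_i\mid\beta)=1$ and $(\rho\mid\beta)=r$; for $k\in\llbracket \ell+1,l-1\rrbracket$ this lets one arrange $(\mu+\rho\mid\beta)=l$ and force a vanishing factor in (\ref{eq-zeta}), which yields (i). For (ii) I would exhibit an element $w\in\hat W$ whose dot-action sends the set of highest weights occurring in $\chi(W^{(i)}_k)$ to the analogous set for $\chi(W^{(i)}_{\ell-k})$, and apply Proposition \ref{aweyl}; the resulting sign $(-1)^{\ell(w)}$, together with the identity $d_\zeta(\lambda+l\varpi_i)=(-1)^{\delta_i}d_\zeta(\lambda)$ that drops out of (\ref{eq-zeta}) via the standard computation $\sin(x+\pi k)=(-1)^k\sin x$, accounts for both the sign patterns and the $l$-periodicities in parts (a), (b), (c).

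Once (i), (ii) and the boundary $Q^{(i)}_\ell=Q^{(i)}_0=1$ are known at the extremal nodes, the $Q$-system equation at each already-processed node can be reorganised as an explicit formula for $Q^{(j)}_k$ at a new neighbour $j$ in terms of the already-known quantities at $i$ and its other neighbours; walking inward along the Dynkin diagram then propagates (i), (ii) and the boundary value to every $i\in I$, and in particular confirms that the full collection is an $\ell$-restricted solution. For positivity (iii) at the extremal nodes, Corollary \ref{Qk+} gives $Q^{(i)}_k>0$ on $\llbracket 0,\ell/a_i\rrbracket$; at all nodes where Chari's formulas apply one has $a_i\le 2$, so combining with the symmetry (ii) covers the whole interval $\llbracket 0,\ell\rrbracket$. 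Proposition \ref{l1.8} together with parts (i)--(iii) of Proposition \ref{plogc} moreover supplies strict log-concavity of $(Q^{(i)}_k)_{0\le k\le\ell}$ at these nodes (matching the plain, shifted and partial-sum shapes of (\ref{chari-E6})--(\ref{chari-E88})), from which (iv) follows immediately. In type $E_6$ the spatial recursion then pushes positivity to all remaining nodes, and Theorem \ref{leeQ} applied to the resulting positive restricted solution re-proves (ii) and (iv) globally.

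The main obstacle, and the reason parts (b), (c) fall short of (a), is positivity at the interior nodes of $E_7$, $E_8$ whose Dynkin labels satisfy $a_i>2$. There Corollary \ref{Qk+} covers too small an initial range to be closed up by (ii), and propagating positivity inward through the spatial recursion requires strict inequalities of the form $(Q^{(j)}_k)^2>Q^{(j)}_{k-1}Q^{(j)}_{k+1}$ at each intermediate node, i.e.\ log-concavity of a sequence itself obtained from a log-concave sequence by the $Q$-system, which is an iterated ``higher-order'' log-concavity that Proposition \ref{plogc} alone cannot deliver. This is precisely the $\infty$-log-concavity phenomenon flagged in the remark following Theorem \ref{knsE678}, and the reason the subsets of nodes appearing in (b) and (c) are exactly those that lie one step further inward than what strict log-concavity at the extremals provides.
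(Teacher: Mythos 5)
You have proved the wrong statement. Theorem~\ref{leeQ} is a general assertion about the abstract, unique positive solution of an $\ell$-restricted $Q$-system, and the paper offers no proof of it at all: it is imported verbatim from Lee's thesis \cite[Theorem 5.3.6]{Leethesis}, where it is established by a direct analysis of the positive solution, and it is then \emph{used} in \S\ref{proofE6} to obtain property (iv) of Conjecture~\ref{KNS-conj} in type $E_6$. What you have written is instead a sketch of the proof of Theorem~\ref{knsE678} (the KNS statement for the quantum-dimension solution), as is clear from your references to properties (i)--(iv) of Conjecture~\ref{KNS-conj}, to Chari's formulas, and to the node-by-node bootstrap --- none of which appear in, or are needed for, the statement of Theorem~\ref{leeQ}.

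Even read charitably as an attempt to deduce Theorem~\ref{leeQ} from the quantum-dimension construction via uniqueness of the positive solution, the argument fails on two counts. First, it is circular: your final step invokes ``Theorem~\ref{leeQ} applied to the resulting positive restricted solution'' to recover unimodality, which is precisely the conclusion you would be trying to establish. Second, the identification of the quantum-dimension solution with \emph{the} unique positive solution requires positivity at every node $i\in I$, and your own last paragraph concedes that this is out of reach at the interior nodes of $E_7$ and $E_8$ with $a_i>2$; so the route cannot deliver symmetry and unimodality of the positive solution in those types, whereas Theorem~\ref{leeQ} asserts them unconditionally. A correct treatment here is simply to cite Lee's thesis, or to reproduce Lee's direct argument about the positive solution, not to re-derive the paper's main theorem.
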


For convenience, from now on, we set $Q_k^{(i)}=d_\zeta(W_k^{(i)})$ for every $k\in\mathbb{Z}$.

\subsection{Type $E_6$}\label{proofE6}

Let us first consider the extremal nodes 1, 2, 6.

\begin{proposition}\label{E6ext}
For $i=1,2,6$, in type $E_6$, the following properties hold:
\begin{enumerate}
\item [(i)] The sequences $Q_k^{(i)}$ are $l$-periodic.
\item [(ii)] We have $Q_{k}^{(i)}=0$ for $k\in\llbracket \ell+1,l-1\rrbracket$.
\item [(iii)] The sequences $Q_k^{(i)}$ are positive when $k\in\llbracket 0,\ell\rrbracket$. Moreover, the sequence $Q_k^{(2)}$ is strictly log-concave for $0\leq k \leq \ell$.
\item[(iv)] (symmetry property) We have $Q_{\ell-k}^{(i)}=Q_k^{(i)}$ for $k\in\llbracket 0,\ell\rrbracket$. 

\end{enumerate}

\end{proposition}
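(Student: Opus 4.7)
The plan is to split the three extremal nodes into two families according to Chari's formulas: $Q^{(i)}_k=d_\zeta(k\varpi_i)$ for $i\in\{1,6\}$ (since $a_1=a_6=1$), whereas $Q^{(2)}_k=\sum_{r=0}^k d_\zeta(r\varpi_2)$, so the two cases require separate arguments. Throughout, the main tools will be the affine Weyl group dot action (Proposition~\ref{aweyl}), the sine product formula (\ref{sin}), Lemma~\ref{lee}, and Corollary~\ref{Qk+}.

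For $i=2$, the crucial input is that $\theta=\varpi_2$ in $E_6$. A direct computation of the dot action yields $s_0\cdot(r\varpi_2)=(\ell+1-r)\varpi_2$, and Proposition~\ref{aweyl} then gives the key identity $d_\zeta((\ell+1-r)\varpi_2)=-d_\zeta(r\varpi_2)$. Pairing $r\leftrightarrow\ell+1-r$ in $\sum_{r=0}^{\ell+1}d_\zeta(r\varpi_2)$ immediately produces $Q^{(2)}_{\ell+1}=0$ and $Q^{(2)}_\ell=1$; applying the same pairing inside $Q^{(2)}_{\ell-k}-Q^{(2)}_k$ yields the symmetry (iv). For $k$ strictly between $\ell+1$ and $l$, Lemma~\ref{lee} applied with $r=l-k\le 10$ gives a positive root that collapses the $\beta$-factor of $d_\zeta(k\varpi_2)$ to zero, and telescoping from $Q^{(2)}_{\ell+1}=0$ yields the remaining cases of (ii); the exceptional value $k=\ell+1$, where Lemma~\ref{lee} fails because no root has both $m_2=1$ and height $11$, is already covered by the dot-action identity $d_\zeta((\ell+1)\varpi_2)=-1$.

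For $i\in\{1,6\}$, (iii) is immediate from Corollary~\ref{Qk+} since $a_i=1$, and (ii) will follow from Lemma~\ref{lee} in the standard way: for $k\in\llbracket\ell+1,l-1\rrbracket$ one picks $\beta\in\Phi^+$ with $(\varpi_i|\beta)=1$ and $(\rho|\beta)=l-k$, collapsing the $\beta$-factor of (\ref{sin}) to zero (here the full range $\llbracket 1,h-1\rrbracket$ is accessible because $\theta$ itself has $m_1=m_6=1$). The hardest point will be the symmetry (iv). My plan is to use the sine product formula: rewriting $\sin(\tfrac{\pi}{l}(\ell-k+r_\beta))=\sin(\tfrac{\pi}{l}(k+h-r_\beta))$ factor by factor in (\ref{sin}), the identity $d_\zeta((\ell-k)\varpi_i)=d_\zeta(k\varpi_i)$ reduces to the palindromic property of the multiset $\{(\rho|\beta):\beta\in\Phi^+,\,(\varpi_i|\beta)=1\}$ about $h/2=6$. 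I would verify this directly for $i=1$ by listing the $16$ positive roots with $m_1=1$ and checking that their heights occur with multiplicities $(1,1,1,2,2,2,2,2,1,1,1)$ for heights $1,\dots,11$; the case $i=6$ follows from the diagram automorphism swapping $1$ and $6$.

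Once (ii), (iii), (iv) are in hand, periodicity (i) for $i\in\{1,6\}$ drops out of (\ref{sin}) directly, since $k\to k+l$ multiplies each $\beta$-factor by $(-1)^{(\varpi_i|\beta)}$ and $\delta_i$ is even in $E_6$; for $i=2$, the same periodicity of the summands, combined with (ii) (which forces $\sum_{r=0}^{l-1}d_\zeta(r\varpi_2)=0$), gives $Q^{(2)}_{k+l}=Q^{(2)}_k$. The remaining strict log-concavity of $Q^{(2)}_k$ on $\llbracket 0,\ell\rrbracket$ is built up in two stages: Proposition~\ref{l1.8} gives strict log-concavity of $(d_\zeta(r\varpi_2))_{0\le r\le \lfloor\ell/2\rfloor}$, Proposition~\ref{plogc}(iii) transfers this to its partial sums, and Proposition~\ref{plogc}(ii)---applied via the palindrome (iv) after noting the unimodality $Q^{(2)}_{\lfloor\ell/2\rfloor-1}<Q^{(2)}_{\lfloor\ell/2\rfloor}$, which follows from positivity of $d_\zeta(\lfloor\ell/2\rfloor\,\varpi_2)$---extends strict log-concavity, and hence positivity, to the full range. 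The principal obstacle is really the palindromic height distribution underlying (iv) for $i=1,6$, which I rely on a hand-verification to confirm.
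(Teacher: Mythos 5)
Your proposal is correct and follows essentially the same route as the paper: Chari's formulas split the cases, positivity on the fundamental alcove plus log-concavity of partial sums handles node $2$, the $s_0$ dot action gives $d_\zeta(r\varpi_2)=-d_\zeta((\ell+1-r)\varpi_2)$ for the symmetry and vanishing at node $2$, Lemma~\ref{lee} and the palindromic height distribution of $\{\beta:(\varpi_i\mid\beta)=1\}$ (checked by inspection) handle nodes $1,6$, and parity of $\delta_i$ gives $l$-periodicity. Your explicit treatment of the edge case $k=\ell+1$ at node $2$, where Lemma~\ref{lee} does not directly apply, is a small point of extra care over the paper's write-up but not a different argument.
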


\begin{proof}

\textbf{(a) Nodes 1 and 6:} Formula~(\ref{chari-E6}) implies that for $i=1,6$, we have $$Q_k^{(i)} = d_\zeta (k\varpi_i).$$
We have $a_1=a_6=1$, so  
that Proposition \ref{l1.8} implies that $Q_k^{(1)}>0$ and $Q_k^{(6)}>0$ for all $0\leq k \leq \ell$. 
This proves (iii) at nodes 1 and 6.

Moreover, since $\delta_i$ is even, we can deduce from (\ref{sin}) that the sequence 
$(Q_k^{(i)})_{k\in\mathbb{Z}}$ is $l$-periodic. In particular, we have $Q_l^{(i)}=1$. This proves (i).

We have  $(\varpi_i\mid\beta)\in\{0,1\}$ for every $\beta\in\Phi^+$ for $i=1, 6$. 
When $k=l-r$, with $r\in\llbracket 1,h-1\rrbracket$, since $\delta_i$ is even, we get
$$Q_{l-r}^{(i)} = \displaystyle \prod_{\beta \in\Phi
^+:(\varpi_i|\beta)=1} \displaystyle \frac{\sin \frac{\pi}{l}((\rho|\beta)-r)}{\sin\frac{\pi}{l}(\rho|\beta)}.$$
It follows from Lemma \ref{lee} that $Q_{l-r}^{(i)}=0$ for $r\in\llbracket 1,h-1\rrbracket$. This proves (ii).

We have, for $k\in\llbracket 0,\ell\rrbracket$,
$$Q_{\ell-k}^{(i)}= \displaystyle \prod_{\beta \in\Phi^+:(\varpi_i|\beta)=1} \displaystyle \frac{\sin\frac{\pi}{l}( k+h-(\rho|\beta) ) }{\sin\frac{\pi}{l}(\rho|\beta)}.$$
By inspection of the list of positive roots, we observe that for $r\in\llbracket 1,h-1\rrbracket$, 
the sets 
\[
\{\beta \mid (\varpi_i|\beta)=1,\ (\rho|\beta  )=r \} \quad\mbox{and}\quad
\{\beta\mid (\varpi_i|\beta)=1,\: h-(\rho|\beta)=r\}
\]
have the same number of elements. Therefore, we can permute the corresponding factors in $Q_{\ell-k}^{(i)}$, which gives $Q_{\ell-k}^{(i)}=Q_k^{(i)}$. This proves (iv).

\medskip
\textbf{(b) Node 2:} Formula~(\ref{chari-E6-2}) implies that
\begin{equation}\label{E62} Q_k^{(2)}= \displaystyle \sum_{r=0}^k d_\zeta(r\varpi_2).\end{equation}
Since $a_2=2$, Proposition \ref{plogc}(iii) and
Proposition \ref{l1.8} imply that 
the sequence $(Q_k^{(2)})_{0\leq k \leq \lfloor \frac{\ell}{2}\rfloor}$ is positive, strictly increasing, 
and strictly log-concave. 
We will use the following lemma to prove (iv) at node 2.
\begin{lemma}\label{zetaE6}
For $k\in\llbracket 0,\ell+1\rrbracket$, we have
\begin{equation}
d_\zeta(k\varpi_2)=-d_\zeta((\ell+1-k)\varpi_2).
\end{equation}
\end{lemma}
\begin{proof}[Proof of Lemma \ref{zetaE6}]
In type $E_6$, we have $\theta=\varpi_2$.  The level $l$ dot action of $s_0$ on $k\varpi_2$ gives
$$s_0 \cdot (k\varpi_2)=k\varpi_2+\rho+(l-h+1-2k)\varpi_2-\rho = (\ell+1-k)\varpi_2$$
and the result follows by Proposition \ref{aweyl}. \hfill $\Box$

Let us assume that $k\in\llbracket 0,\lfloor\frac{\ell}{2}\rfloor\rrbracket$. By Lemma \ref{zetaE6},
$$Q_{\ell-k}^{(2)}= \displaystyle \sum_{r=0}^{\ell-k} d_\zeta(r\varpi_2) =\displaystyle \sum_{r=0}^{\lfloor \frac{\ell}{2}\rfloor} d_\zeta (r\varpi_2) - \displaystyle \sum_{s=k+1}^{\ell-\lfloor \frac{\ell}{2}\rfloor} d_\zeta(s\varpi_2).$$
If $\ell$ is odd, this difference is equal to 
$\displaystyle \sum_{r=0}^k d_\zeta(r\varpi_2)-d_\zeta((\ell+1)\varpi_2/2)$; if $\ell$ is even
it is equal to $\displaystyle \sum_{r=0}^k d_\zeta(r\varpi_2)$. 
Since for $\ell$ odd, $d_\zeta((\ell+1)\varpi_2/2)=0$ by Lemma \ref{zetaE6}, we get in both cases $Q_{\ell-k}^{(2)}=Q_k^{(2)}$, 
which proves (iv).

This also implies the positivity property, since we have seen that $Q_k^{(2)}>0$ for $0\leq k \leq \lfloor\frac{\ell}{2}\rfloor.$
By Proposition \ref{plogc}(ii), we can conclude that the sequence $(Q_k^{(2)})_{0\leq k\leq \ell}$ is strictly log-concave, which proves (iii).

Lemma \ref{zetaE6} implies that $Q_{\ell+1}^{(2)}=0.$ 

By (\ref{E62}), we have
$$Q_{\ell+k+1}^{(2)}=Q_{\ell+k}^{(2)}+d_\zeta((k+1)\varpi_2).$$
Lemma \ref{lee} implies that $d_\zeta((k+1)\varpi_2)=0$ for $k\in\llbracket \ell+1,\ell+h\rrbracket$. Therefore, we conclude by induction on $k$ that $Q_{\ell+k}^{(2)}=0$ for $k\in\llbracket 1,h-1\rrbracket$. This proves (ii).

Property (ii) implies that $Q_l^{(2)}=d_\zeta(l\varpi_2)=(-1)^{\delta_2}=1$. 
Since the sequence $(d_\zeta(k\varpi_2))_{k\in\mathbb{Z}}$ is $l$-periodic, 
it follows that $Q_{l+k}^{(2)}=Q_k^{(2)}$ for all $k\in\Z$, which proves (i). 
This finishes the proof of Proposition \ref{E6ext}. \hfill$\Box$

\end{proof}
\end{proof}
The remaining nodes are numbered 3, 4, 5. Node 3 (resp. 4, 5) is the only neighbour of node 1 (resp. 2, 6) in the Dynkin diagram $E_6$ (see Figure 1).

\begin{lemma}\label{E6nei}
For $j=3,4,5$, the following properties hold:
\begin{enumerate}
\item [(i)] The sequences $Q_k^{(j)}$ are $l$-periodic.
\item [(ii)] We have $Q_{k}^{(j)}=0$ for $k\in\llbracket \ell+1,l-1\rrbracket$.
\item [(iii)] The sequences $Q_k^{(j)}$ are  positive when $k\in\llbracket 0,\ell\rrbracket$.
\item[(iv)] (symmetry property) We have $Q_{\ell-k}^{(j)}=Q_k^{(j)}$ for $k\in\llbracket 0,\ell\rrbracket$. 

\end{enumerate}
\end{lemma}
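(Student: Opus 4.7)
The plan is to derive each of the four properties for $j\in\{3,4,5\}$ from the corresponding property for the unique extremal neighbor of $j$, by rewriting the $Q$-system relation. In the $E_6$ Dynkin diagram, node $1$ (resp.\ $6$, $2$) has only the neighbor $3$ (resp.\ $5$, $4$). Since $(Q_k^{(i)})=(d_\zeta(W_k^{(i)}))$ satisfies the unrestricted $Q$-system in $\C$, equation (\ref{eq1}) applied at $i=1$ yields
\begin{equation}
Q_k^{(3)} \;=\; \bigl(Q_k^{(1)}\bigr)^2 - Q_{k-1}^{(1)}Q_{k+1}^{(1)},\qquad k\ge 1,
\end{equation}
with the analogous formulas $Q_k^{(5)}=(Q_k^{(6)})^2-Q_{k-1}^{(6)}Q_{k+1}^{(6)}$ and $Q_k^{(4)}=(Q_k^{(2)})^2-Q_{k-1}^{(2)}Q_{k+1}^{(2)}$. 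Thus each sequence $(Q_k^{(j)})_{k\in\N}$ for $j\in\{3,4,5\}$ is completely determined (for $k\ge 1$) by the corresponding extremal neighbor sequence, whose properties were established in Proposition~\ref{E6ext}. I will outline the argument for $j=3$; the nodes $4$ and $5$ are treated identically.

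For property (iii), strict positivity of $Q_k^{(3)}$ for $k\in\llbracket 1,\ell-1\rrbracket$ is immediate from the displayed formula and the strict log-concavity of $(Q_k^{(1)})_{0\le k\le \ell}$ established in Proposition~\ref{E6ext}. At the endpoints one has $Q_0^{(3)}=d_\zeta(0)=1$; at $k=\ell$ direct substitution gives $Q_\ell^{(3)}=(Q_\ell^{(1)})^2-Q_{\ell-1}^{(1)}Q_{\ell+1}^{(1)}=1$, using $Q_\ell^{(1)}=1$ (symmetry) and $Q_{\ell+1}^{(1)}=0$ (vanishing). Property (iv) (symmetry) follows by substituting $\ell-k$ for $k$ and invoking $Q_{\ell-r}^{(1)}=Q_r^{(1)}$.

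Property (i) ($l$-periodicity) transfers from $(Q_k^{(1)})$ to $(Q_k^{(3)})$ by the defining formula. For property (ii), I split the range $k\in\llbracket\ell+1,l-1\rrbracket$: for $k\in\llbracket\ell+2,l-2\rrbracket$ all three of $Q_{k-1}^{(1)},Q_k^{(1)},Q_{k+1}^{(1)}$ vanish by Proposition~\ref{E6ext}(ii), so $Q_k^{(3)}=0$; at $k=\ell+1$ one has $Q_{\ell+1}^{(1)}=Q_{\ell+2}^{(1)}=0$ while $Q_\ell^{(1)}=1$, giving $Q_{\ell+1}^{(3)}=0-1\cdot 0=0$; at $k=l-1$ one has $Q_{l-2}^{(1)}=Q_{l-1}^{(1)}=0$ while $Q_l^{(1)}=1$, giving $Q_{l-1}^{(3)}=0$ (this uses $h=12\ge 3$).

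There is no serious obstacle: the lemma is essentially a bookkeeping consequence of Proposition~\ref{E6ext} and the $Q$-system. The only step requiring explicit attention is the handling of the four boundary values $k\in\{0,\ell,\ell+1,l-1\}$, where some factor $Q_k^{(i)}$ equals $1$ rather than vanishing, and one must verify by direct substitution into the quadratic expression that the asserted value is still obtained. The main conceptual input behind the whole proof is the strict log-concavity of the extremal neighbor sequences on the full range $\llbracket 0,\ell\rrbracket$, which for node $2$ crucially depends on Proposition~\ref{plogc}(ii).
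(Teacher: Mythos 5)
Your proposal is correct and follows essentially the same route as the paper: all four properties are read off from the $Q$-system relation $Q_k^{(j)}=(Q_k^{(i)})^2-Q_{k-1}^{(i)}Q_{k+1}^{(i)}$ at the unique extremal neighbour $i$, combined with Proposition~\ref{E6ext}. The only (harmless) variation is that for nodes $3$ and $5$ the paper obtains positivity from Corollary~\ref{Qk+} (valid for $k\le\ell/2$ since $a_3=a_5=2$) together with symmetry, rather than from log-concavity of $Q_k^{(1)}$ and $Q_k^{(6)}$; your route also works because $a_1=a_6=1$, so Proposition~\ref{l1.8} gives strict log-concavity of $d_\zeta(k\varpi_1)$ and $d_\zeta(k\varpi_6)$ on all of $\llbracket 0,\ell\rrbracket$ --- but you should cite Proposition~\ref{l1.8} for this, since Proposition~\ref{E6ext}(iii) only asserts strict log-concavity at node $2$.
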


\begin{proof}
The same method applies at nodes 3, 4, and 5. At a fixed node $i=1,2,6$ and $j$ its unique neighbour, the $Q$-system reads:
\begin{equation}\label{e6n}
(Q_k^{(i)})^2=Q_{k-1}^{(i)} Q_{k+1}^{(i)} + Q_k^{(j)}.\end{equation}
Properties (i), (ii), and (iv) follow easily from equation (\ref{e6n}) and Proposition~\ref{E6ext}.
Since $a_3=a_5=2$, Corollary \ref{Qk+} gives $Q_k^{(3)}>0$ and $Q_k^{(5)}>0$ for $0\leq k \leq \frac{\ell}{2}$. Moreover, the symmetry property implies that $Q_{\ell-k}^{(5)} = Q_k^{(5)}$ and $Q_{\ell-k}^{(3)}=Q_k^{(3)}$ for $0\leq k \leq \ell$, which proves (iii) at nodes 3 and 5.

By Proposition \ref{E6ext}(iii), the sequence $(Q_k^{(2)})_{0\leq k \leq \ell}$ is strictly log-concave, thus we have
$$Q_k^{(4)}=(Q_k^{(2)})^2-Q_{k-1}^{(2)}Q_{k+1}^{(2)} >0$$
for $k\in\llbracket 1,\ell -1\rrbracket$. The symmetry property implies that $Q_\ell^{(4)}=Q_0^{(4)}=1>0$, which finishes the proof. \hfill $\Box$

\end{proof}

We then use Theorem \ref{leeQ} to obtain property (iv) 
of Conjecture \ref{KNS-conj} in type $E_6$, and thus complete the proof of Theorem \ref{knsE678} in this case.

\subsection{Type $E_7$}\label{proofE7}

In this section, we shall frequently use the list of positive roots of $E_7$. 
For the convenience of the reader, this list is reproduced below in the table of Appendix~A.
In this table each root is numbered from $1$ to $63$, and we shall denote by $\beta_n$ the
the root with number $n$.

As in type $E_6$, we first consider the extremal nodes, numbered 1  and 7 (see Figure 1).

\begin{lemma}\label{e717}
For $i=1,7$ in type $E_7$, the following properties hold:
\begin{enumerate}
\item [(i)] The sequence $Q_k^{(1)}$ is $l$-periodic, and we have $Q_{l+k}^{(7)}=-Q_k^{(7)}$ for all $k\in\mathbb{Z}$.
\item [(ii)] We have $Q_{k}^{(i)}=0$ for $k\in\llbracket \ell+1,l-1\rrbracket$.
\item [(iii)] The sequences $(Q_k^{(i)})_{0\leq k \leq\ell}$ are positive. Moreover, the sequence $(Q_k^{(1)})_{0\leq k \leq\ell}$ is strictly log-concave.
\item[(iv)] (symmetry property) We have $Q_{\ell-k}^{(i)}=Q_k^{(i)}$ for $k\in\llbracket 0,\ell\rrbracket$. 

\end{enumerate}
\end{lemma}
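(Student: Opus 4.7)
The plan is to handle the two extremal nodes separately by reducing each to a case already treated in $E_6$. Chari's formulas give $Q^{(7)}_k = d_\zeta(k\varpi_7)$, parallel to nodes $1,6$ of $E_6$, and $Q^{(1)}_k = \sum_{r=0}^{k} d_\zeta(r\varpi_1)$, parallel to node~$2$ of $E_6$, so my strategy is to transpose the corresponding arguments with adjustments for $E_7$-specific parities and root data.

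For node~$7$, I would first invoke Corollary~\ref{Qk+} to get (iii), since $a_7 = 1$. Next, because $\varpi_7$ is minuscule, $(\varpi_7|\beta)\in\{0,1\}$, so the shift $k\mapsto k+l$ in~(\ref{sin}) multiplies $Q^{(7)}_k$ by $(-1)^{\delta_7}=-1$, giving~(i), and Lemma~\ref{lee}, which supplies for each $r\in\llbracket 1,h-1\rrbracket$ a $\beta$ with $(\varpi_7|\beta)=1$ and $(\rho|\beta)=r$, makes one factor of $Q^{(7)}_{l-r}$ vanish, proving~(ii). For~(iv) I would use the identity
\[
\sin\frac{\pi}{l}((\ell-k)+(\rho|\beta))=\sin\frac{\pi}{l}(h+k-(\rho|\beta))
\]
and then verify, by inspection of Appendix~A, that the multiset $\{(\rho|\beta):\beta\in\Phi^+,\,(\varpi_7|\beta)=1\}$ is symmetric about $h/2=9$, pairing $\beta\leftrightarrow\beta'$ with $(\rho|\beta')=h-(\rho|\beta)$ to identify $Q^{(7)}_{\ell-k}$ with $Q^{(7)}_k$.

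For node~$1$ the central ingredient is a half-symmetry replacing Lemma~\ref{zetaE6}. Because $\theta=\varpi_1$ in $E_7$, one has $(k\varpi_1\mid\theta)=2k$, and the level-$l$ dot action of $s_0$ gives
\[
s_0\cdot(k\varpi_1)=k\varpi_1+(\ell+1-2k)\theta=(\ell+1-k)\varpi_1,
\]
so Proposition~\ref{aweyl} produces $d_\zeta(k\varpi_1)=-d_\zeta((\ell+1-k)\varpi_1)$. From this, $\sum_{r=0}^{\ell+1}d_\zeta(r\varpi_1)=0$ gives $Q^{(1)}_{\ell+1}=0$, Lemma~\ref{lee} at height $l-r\in\llbracket 1,h-2\rrbracket$ kills $d_\zeta(r\varpi_1)$ for $r\in\llbracket\ell+2,l-1\rrbracket$, and telescoping yields~(ii). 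Property~(iv) follows exactly as in the $E_6$ node-$2$ case: $Q^{(1)}_{\ell-k}=-\sum_{r=\ell-k+1}^{\ell+1}d_\zeta(r\varpi_1)=\sum_{s=0}^{k}d_\zeta(s\varpi_1)=Q^{(1)}_k$. For~(i), the evenness of $\delta_1$ makes $d_\zeta(r\varpi_1)$ itself $l$-periodic in $r$, and $\sum_{r=0}^{l-1}d_\zeta(r\varpi_1)=0$ by~(ii), hence $Q^{(1)}_{l+k}=Q^{(1)}_k$. Finally for~(iii), Proposition~\ref{l1.8} (with $a_1=2$) gives strict log-concavity and positivity of $(d_\zeta(r\varpi_1))_{0\le r\le\lfloor\ell/2\rfloor}$, Proposition~\ref{plogc}(iii) promotes this to strict log-concavity of $(Q^{(1)}_k)_{0\le k\le\lfloor\ell/2\rfloor}$, which is strictly increasing by construction, and~(iv) together with Proposition~\ref{plogc}(ii) extends both properties to all of $[0,\ell]$.

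The main obstacle is the sign bookkeeping that distinguishes $E_7$ from $E_6$: $\delta_7$ odd produces the antiperiodicity $Q^{(7)}_{l+k}=-Q^{(7)}_k$ with no $E_6$ analogue, and the pairing required for~(iv) at node~$7$ cannot invoke $-w_0$ (which acts trivially on the $E_7$ root system), so the multiset symmetry about $h/2$ must be checked by hand against the table of $63$ positive roots in Appendix~A.
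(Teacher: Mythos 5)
Your proposal is correct and follows essentially the same route as the paper, which simply states that the arguments of Proposition~\ref{E6ext} at nodes $1,6$ (for node $7$, with $(-1)^{\delta_7}=-1$ giving the antiperiodicity) and at node $2$ (for node $1$, with $\theta=\varpi_1$ replacing $\theta=\varpi_2$) carry over verbatim; you have correctly filled in those transposed details, including the key reflection identity $d_\zeta(k\varpi_1)=-d_\zeta((\ell+1-k)\varpi_1)$ and the multiset symmetry of $\{(\rho\mid\beta):(\varpi_7\mid\beta)=1\}$ about $h/2$. Your derivation of the symmetry at node $1$ via $Q^{(1)}_{\ell-k}=-\sum_{r=\ell-k+1}^{\ell+1}d_\zeta(r\varpi_1)$ is in fact slightly cleaner than the paper's parity case split, but it is the same idea.
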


\begin{proof}
\textbf{(a) Node 7:} By Formula (18), we have $Q_k^{(7)}=d_\zeta(k\varpi_7)$. Since $\delta_7=27$ and $a_7=1$, we have, for all $k$,
$$ Q_{l+k}^{(7)}=\displaystyle \prod_{\beta \in \Phi^+} \frac{\sin\frac{\pi}{l}\left(   l+k+ (\rho\mid\beta)\right)}{\sin\frac{\pi}{l}(\rho\mid\beta)}=(-1)^{\delta_7}\displaystyle \prod_{ (\varpi_7\mid\beta)=1} \frac{\sin\frac{\pi}{l}\left(k+ (\rho\mid\beta)\right)}{\sin\frac{\pi}{l}(\rho\mid\beta)}=-Q_k^{(7)},$$
which proves $(i)$. For properties $(ii),(iii),(iv)$, the proof is the same as the proof of Proposition \ref{E6ext} at nodes 1 and 6.

\textbf{(b) Node 1:} By Formula (16), we have $Q_k^{(1)}=\displaystyle \sum_{r=0}^k d_\zeta(r\varpi_1)$, 
where $\varpi_1=\theta$ is the longest root in type $E_7$. 
The proof is the same as the proof of Proposition \ref{E6ext} at node 2. \hfill$\Box$

\end{proof}

Node 3 (resp. 6) is the unique neighbour of node 1 (resp. 7).

\begin{lemma}\label{e7n}
For $j=3,6$, in type $E_7$, the following properties hold:
\begin{enumerate}
\item [(i)] The sequences $Q_k^{(j)}$ are $l$-periodic.
\item [(ii)] We have $Q_{k}^{(j)}=0$ for $k\in\llbracket \ell+1,l-1\rrbracket$.
\item [(iii)] The sequences $Q_k^{(j)}$ are  positive when $k\in\llbracket 0,\ell\rrbracket$.
\item[(iv)] (symmetry property) We have $Q_{\ell-k}^{(j)}=Q_k^{(j)}$ for $k\in\llbracket 0,\ell\rrbracket$. 

\end{enumerate}
\end{lemma}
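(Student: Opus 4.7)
The plan is to mirror exactly the proof of Lemma \ref{E6nei} for the $E_6$-neighbours: since nodes 3 and 6 are the unique neighbours of the extremal nodes 1 and 7 respectively, the $Q$-system at the extremal node $i\in\{1,7\}$ with neighbour $j\in\{3,6\}$ collapses to
\[
(Q_k^{(i)})^2 = Q_{k-1}^{(i)} Q_{k+1}^{(i)} + Q_k^{(j)},
\]
so that $Q_k^{(j)} = (Q_k^{(i)})^2 - Q_{k-1}^{(i)} Q_{k+1}^{(i)}$. Thus each of the four properties will be derived by transferring the corresponding property for $Q^{(1)}$ and $Q^{(7)}$ from Lemma~\ref{e717} through this identity.

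For the $l$-periodicity (i), the case $j=3$ is immediate from Lemma~\ref{e717}(i). For $j=6$, I would use the identity $Q_{l+k}^{(7)} = -Q_k^{(7)}$: substituting into the expression for $Q_{l+k}^{(6)}$, the two minus signs combine in both the square and the product, yielding $Q_{l+k}^{(6)} = Q_k^{(6)}$. For the vanishing property (ii), if $k\in\llbracket \ell+1, l-1\rrbracket$, then for the interior values $k\in\llbracket \ell+2,l-2\rrbracket$ all three quantities $Q_{k-1}^{(i)}, Q_k^{(i)}, Q_{k+1}^{(i)}$ vanish by Lemma~\ref{e717}(ii); the boundary cases $k=\ell+1$ and $k=l-1$ require only that one factor in each product is zero, which remains true since $Q_\ell^{(i)}=1$ and $Q_l^{(i)}=\pm1$. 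For the symmetry property (iv), I simply replace $k$ by $\ell-k$ in the expression for $Q_k^{(j)}$ and apply Lemma~\ref{e717}(iv).

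The most delicate property is positivity (iii) on $\llbracket 0,\ell\rrbracket$. On the interior $\llbracket 1,\ell-1\rrbracket$, the expression $(Q_k^{(i)})^2 - Q_{k-1}^{(i)}Q_{k+1}^{(i)} > 0$ is exactly strict log-concavity of $(Q_k^{(i)})_{0\le k\le \ell}$. For $i=1$, this is given by Lemma~\ref{e717}(iii). For $i=7$, we have $Q_k^{(7)} = d_\zeta(k\varpi_7)$ and $a_7 = 1$, so Proposition~\ref{l1.8} directly furnishes strict log-concavity on the whole range $\llbracket 0,\ell\rrbracket$. The endpoints are handled by $Q_0^{(j)} = 1$ together with the symmetry property (iv), which yields $Q_\ell^{(j)} = Q_0^{(j)} = 1$.

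The main subtlety — rather than a genuine obstacle — is the sign twist $Q_{l+k}^{(7)} = -Q_k^{(7)}$ at node 7, which could in principle propagate a sign to $Q^{(6)}$; what saves the day is that $Q^{(6)}$ is expressed as a polynomial of degree two in the $Q^{(7)}$'s whose monomials are all of even total degree in shifts, so the sign cancels and true $l$-periodicity survives at node 6. Once this observation is made, the proof is otherwise a routine transcription of the $E_6$-neighbour argument from Lemma~\ref{E6nei}.
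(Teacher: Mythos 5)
Your proof is correct and follows essentially the same route as the paper: express $Q_k^{(j)}$ through the $Q$-system at the adjacent extremal node, transfer periodicity, vanishing and symmetry directly, and get interior positivity from strict log-concavity of the extremal-node sequence (with the endpoints handled by $Q_0^{(j)}=Q_\ell^{(j)}=1$). The only cosmetic difference is at node 6, where the paper (via the argument of Lemma~\ref{E6nei}) uses Corollary~\ref{Qk+} with $a_6=2$ plus symmetry, while you use strict log-concavity of $d_\zeta(k\varpi_7)$ from Proposition~\ref{l1.8}; both are immediate from results already established.
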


\begin{proof}
The positivity of $Q^{(3)}_k$ follows from the log-concavity of $Q^{(1)}_k$.
Apart from that,
the proof is the same as that of Lemma \ref{E6nei}. \hfill$\Box$
\end{proof}

We can now prove two results at nodes 4 and 5.

\begin{lemma}\label{sym45}
For $j=4,5$ in type $E_7$, the following properties hold:
\begin{enumerate}
\item [(i)] We have $Q_{l+k}^{(5)}=-Q_k^{(5)}$ and $Q_{l+k}^{(4)}=Q_k^{(4)}$ for $k\in\llbracket 0,\ell\rrbracket$ mod $l$.
\item [(ii)] (symmetry property) We have $Q_{\ell-k}^{(j)}=Q_k^{(j)}$ for $k\in\llbracket 0,\ell\rrbracket$.

\end{enumerate}
\end{lemma}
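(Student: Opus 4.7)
The plan is to exploit the $Q$-system equations at the already-handled neighbouring nodes $3$ and $6$ to pull back the sought symmetries of $Q^{(4)}$ and $Q^{(5)}$ from those of $Q^{(1)},Q^{(3)},Q^{(6)},Q^{(7)}$. More precisely, node~$4$ has unique neighbours $1,3,5$ in the Dynkin diagram and node~$3$ has only $1$ and $4$ as neighbours, so the $Q$-system relation at node~$3$ reads
\begin{equation*}
Q_k^{(1)}Q_k^{(4)} \;=\; \bigl(Q_k^{(3)}\bigr)^2 - Q_{k-1}^{(3)} Q_{k+1}^{(3)}.
\end{equation*}
Similarly, since node~$6$ only touches $5$ and $7$, the relation at node~$6$ gives
\begin{equation*}
Q_k^{(7)}Q_k^{(5)} \;=\; \bigl(Q_k^{(6)}\bigr)^2 - Q_{k-1}^{(6)} Q_{k+1}^{(6)}.
\end{equation*}
These identities are precisely what allows us to \emph{solve} for $Q^{(4)}_k$ and $Q^{(5)}_k$ once we know $Q^{(1)}_k\neq 0$, resp.\ $Q^{(7)}_k\neq 0$.

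For property~(i), I would substitute $k\mapsto l+k$ in the two identities above. By Lemma~\ref{e7n}(i) the sequences $Q^{(3)}$ and $Q^{(6)}$ are $l$-periodic, and by Lemma~\ref{e717}(i) so is $Q^{(1)}$ while $Q^{(7)}$ is $l$-antiperiodic. Hence the right-hand sides at $l+k$ and at $k$ agree for node~$3$, while those for node~$6$ agree. On the left-hand sides, $Q^{(1)}_{l+k}=Q^{(1)}_k$ is nonzero as soon as $k\bmod l\in \llbracket 0,\ell\rrbracket$ (by Lemma~\ref{e717}(iii)), which gives $Q^{(4)}_{l+k}=Q^{(4)}_k$; similarly $Q^{(7)}_{l+k}=-Q^{(7)}_k$ is nonzero in the same range, producing the sign flip $Q^{(5)}_{l+k}=-Q^{(5)}_k$.

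For property~(ii), I would substitute $k\mapsto \ell-k$ with $1\le k\le \ell-1$ and invoke the symmetries $Q^{(i)}_{\ell-k}=Q^{(i)}_k$ for $i\in\{1,3,6,7\}$ proved in Lemmas~\ref{e717}(iv) and \ref{e7n}(iv). Reindexing $Q^{(3)}_{\ell-k\pm 1}=Q^{(3)}_{k\mp 1}$ swaps the two factors in the product $Q^{(3)}_{k-1}Q^{(3)}_{k+1}$, which is harmless, and the same for $Q^{(6)}$; dividing by $Q^{(1)}_{\ell-k}=Q^{(1)}_k>0$ (resp.\ $Q^{(7)}_{\ell-k}=Q^{(7)}_k>0$) yields the desired equality. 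The boundary cases $k=0$ and $k=\ell$ are handled separately: $Q^{(j)}_0=1$ by definition, while $Q^{(j)}_\ell=1$ follows by plugging $k=\ell$ into the same $Q$-system identity using $Q^{(3)}_\ell=Q^{(1)}_\ell=1$ and $Q^{(3)}_{\ell+1}=0$ from Lemma~\ref{e7n}(ii) (and analogously at node~$5$).

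The only real subtlety is bookkeeping: one must check that every denominator is nonzero on the range where the identity is applied, which is why the statement is phrased with $k$ in $\llbracket 0,\ell\rrbracket$ mod~$l$, and one must be attentive to the index shift $Q^{(3)}_{\ell-k\pm 1}=Q^{(3)}_{k\mp 1}$ so as to correctly match the two sides. Beyond that, the argument is purely formal manipulation of the $Q$-system together with the already-established results of Lemmas~\ref{e717} and \ref{e7n}.
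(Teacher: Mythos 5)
Your argument is correct and is essentially the paper's own proof: both solve for $Q^{(4)}_k$ and $Q^{(5)}_k$ from the $Q$-system relations at nodes $3$ and $6$, using the periodicity and symmetry already established at nodes $1,3,6,7$ together with the nonvanishing of $Q^{(1)}_k$ and $Q^{(7)}_k$ on $\llbracket 0,\ell\rrbracket$; you merely spell out the bookkeeping (index swap in $Q^{(3)}_{k-1}Q^{(3)}_{k+1}$, boundary cases $k=0,\ell$) that the paper leaves implicit. (Your aside that node $4$ has neighbours $1,3,5$ is wrong --- they are $2,3,5$ --- but this claim is never used in the argument.)
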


\begin{proof}
We use the same method at both nodes. Thus, at node 5 (resp. node~4), we use the $Q$-system at node 6 (resp. node 3), which reads:
$$(Q_k^{(6)})^2 = Q_{k-1}^{(6)} Q_{k+1}^{(6)} + Q_k^{(7)} Q_k^{(5)}.$$
We deduce both properties from the equation above, since they are known at nodes 6 and 7, and $Q_k^{(7)} >0$ for $k\in\llbracket 0,\ell\rrbracket$.\hfill$\Box$

\end{proof}

\begin{lemma}\label{e7zero2}
We have $Q_{\ell+k}^{(2)}=0$ for $k \in \llbracket 1, h-1\rrbracket$.
\end{lemma}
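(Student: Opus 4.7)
The plan is to combine Chari's formula with a uniform root-based vanishing argument, and then to propagate via the $Q$-systems at nodes $2$ and $4$.

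First, by the formula $Q_k^{(2)} = \sum_{r=0}^k d_\zeta(\lambda_{r, k-r})$ with $\lambda_{r,s} := r\varpi_2 + s\varpi_7$, and using (\ref{sin}), the key identity is that for a positive root $\beta = \sum_i m_i(\beta)\,\alpha_i$ of height $(\rho\mid\beta) = \sum_i m_i(\beta)$, we have
\[
(\lambda_{r,s} + \rho \mid \beta) = r\, m_2(\beta) + s\, m_7(\beta) + (\rho\mid\beta).
\]
If there exists a positive root $\beta$ of $E_7$ with $m_2(\beta) = m_7(\beta) = 1$ and $(\rho\mid\beta) = 18 - k$, then for every $r$ in the sum (so that $r + s = \ell + k$) we get $(\lambda_{r,s} + \rho \mid \beta) = (\ell + k) + (18 - k) = l$, so the corresponding sine factor in~(\ref{sin}) vanishes uniformly, annihilating \emph{every} summand and giving $Q_{\ell+k}^{(2)} = 0$. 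Inspection of the table of positive roots of $E_7$ in Appendix~A reveals that the heights realised by roots with $m_2 = m_7 = 1$ form exactly the interval $\llbracket 5, 13\rrbracket$ (for instance, $\alpha_2 + \alpha_4 + \alpha_5 + \alpha_6 + \alpha_7$ has height $5$, and $\alpha_1 + \alpha_2 + 2\alpha_3 + 3\alpha_4 + 3\alpha_5 + 2\alpha_6 + \alpha_7$ has height $13$), so this step establishes the lemma for $k \in \llbracket 5, 13\rrbracket$.

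To extend to $k \in \llbracket 1, 4\rrbracket \cup \llbracket 14, 17\rrbracket$, I would bootstrap using the $Q$-systems. Substituting $Q_{\ell+k}^{(2)} = 0$ for $k \in \llbracket 5, 13\rrbracket$ into the $Q$-system at node~$2$,
\[
(Q_{\ell+k}^{(2)})^2 = Q_{\ell+k-1}^{(2)}\, Q_{\ell+k+1}^{(2)} + Q_{\ell+k}^{(4)},
\]
immediately forces $Q_{\ell+k}^{(4)} = 0$ on the same range. The $Q$-system at node~$4$,
\[
(Q_{\ell+k}^{(4)})^2 = Q_{\ell+k-1}^{(4)}\, Q_{\ell+k+1}^{(4)} + Q_{\ell+k}^{(2)}\, Q_{\ell+k}^{(3)}\, Q_{\ell+k}^{(5)},
\]
combined with Lemma~\ref{e7n} (namely $Q_{\ell+k}^{(3)} = 0$ throughout $\llbracket 1, h-1\rrbracket$, which kills the last term), then propagates the vanishing of $Q_{\ell+k}^{(4)}$ inward to $k = 4, 3, 2, 1$ and outward to $k = 14, \ldots, 17$; alternating this with the node-$2$ system forces $Q_{\ell+k}^{(2)} = 0$ on the full edge range.

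The main obstacle is the first step, namely verifying the precise height set $\llbracket 5, 13\rrbracket$ for positive roots of $E_7$ with $m_2 = m_7 = 1$; this is not conceptual but does require a direct inspection of Appendix~A. Once this fact is in hand, the propagation to the edge values $k \in \llbracket 1, 4\rrbracket \cup \llbracket 14, 17\rrbracket$ is a routine inductive bootstrap via the $Q$-system relations at nodes $2$ and $4$.
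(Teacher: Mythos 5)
Your proposal is correct, and it takes a genuinely different route from the paper. The paper's proof works entirely at node $2$: it introduces the affine Weyl group element $w=s_0s_1s_3s_4s_5s_6s_7s_6s_5s_4s_3s_1s_0$ of odd length, uses Proposition~\ref{aweyl} to pair off and cancel most summands $d_\zeta(r\varpi_2+(\ell+k-r)\varpi_7)$ against each other, and then kills the leftover terms one by one via individually chosen positive roots, in a case-by-case analysis over $k=1,\dots,17$. Your first step replaces all of this, for $k\in\llbracket 5,13\rrbracket$, by the much cleaner observation that a single root $\beta$ with $(\varpi_2\mid\beta)=(\varpi_7\mid\beta)=1$ and $(\rho\mid\beta)=18-k$ makes $(\lambda+\rho\mid\beta)=l$ for \emph{every} summand at once (the relevant heights are indeed exactly $\llbracket 5,13\rrbracket$, realised e.g.\ by $\beta_{30},\beta_{36},\beta_{39},\beta_{44},\beta_{47},\beta_{51},\beta_{54},\beta_{57},\beta_{59}$). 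Your bootstrap for the edge values is also sound: the key point, which you should make explicit, is that once the inhomogeneous terms vanish the relations read $(Q^{(4)}_{\ell+k})^2=Q^{(4)}_{\ell+k-1}Q^{(4)}_{\ell+k+1}$ and $(Q^{(2)}_{\ell+k})^2=Q^{(2)}_{\ell+k-1}Q^{(2)}_{\ell+k+1}$ over the \emph{reals}, so a zero at $\ell+k\pm 1$ forces $(Q_{\ell+k})^2=0$ and hence $Q_{\ell+k}=0$; this lets a single zero propagate in both directions, step by step, across the whole range where the relation holds. The trade-offs: your argument is shorter and avoids the Weyl-group computation, but it is not self-contained at node $2$ — it needs Lemma~\ref{e7n} at node $3$ and the unrestricted $Q$-system at nodes $2$ and $4$ — and it does not produce the pairing identities $d_\zeta(p\varpi_2+q\varpi_7)=-d_\zeta((\ell-p+7)\varpi_2+(2p+q-\ell-7)\varpi_7)$ that the paper reuses in Lemma~\ref{e7one2} to compute $Q^{(2)}_\ell=1$ and $Q^{(2)}_l=-1$, so those would still have to be established separately.
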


\begin{proof}
Formula (17) gives
$$Q_k^{(2)} =  \displaystyle \sum_{r=0}^k d_\zeta(r\varpi_2+(k-r)\varpi_7).$$ 
Here $a_2=2$ and $a_7=1$.

Let $w= s_0 s_1 s_3 s_4 s_5 s_6 s_7 s_6 s_5 s_4 s_3 s_1 s_0 \in \widehat{W}$.
One can check that, for $p,q\in\Z$, 
$$w \cdot (p \varpi_2 + q \varpi_7)=(\ell-p+7)\varpi_2 + (2p+q-\ell-7)\varpi_7.$$
Therefore, since $l(w)=13$, we can use Proposition \ref{aweyl} to deduce that
$$d_\zeta(p \varpi_2 + q \varpi_7 )=- d_\zeta( (\ell-p+7)\varpi_2 + (2p+q-\ell-7)\varpi_7 ), $$
for all $0\leq p \leq \ell+7$ and $q \geq \ell+7-2p$.
In particular, for $0\leq k \leq l$, we have, for $0\leq r \leq\ell+7$ and $\ell+7-k \leq r \leq k$,
$$d_\zeta(r \varpi_2 + (k-r)\varpi_7)=-d_\zeta((\ell-r+7)\varpi_2+(k+r-\ell-7)\varpi_7).$$
When $k=\ell+7$, we get
$$d_\zeta(r\varpi_2+(\ell+7-r)\varpi_7)=-d_\zeta((\ell+7-r)\varpi_2+\varpi_7),$$
for $0\leq r \leq \ell+7$. This implies that $Q_{\ell+7}^{(2)}=0$.
When $k=\ell+6$, we have 
\[
d_\zeta(r\varpi_2+(\ell+6-r)\varpi_7)=-d_\zeta((\ell-r+7)\varpi_2+(r-1)\varpi_7),
\]
for $1\leq r \leq \ell+6$, so that
$$\begin{array}{ll}
  Q_{\ell+6}^{(2)}&= \displaystyle \sum_{r=0}^{\ell+6} d_\zeta(r\varpi_2+(\ell+6-r)\varpi_7)\\\\
&= \displaystyle \sum_{r=1}^{\ell+6} d_\zeta(r\varpi_2+(\ell+6-r)\varpi_7)+d_\zeta((\ell+6)\varpi_7)\\\\
&= 0+ Q_{\ell+6}^{(7)} =0,
\end{array}$$
since we already have $Q_{\ell+k}^{(7)}=0$ for $1\leq k \leq h-1$ by Lemma \ref{e717}. 
When $k=\ell+5$, we have
$$\begin{array}{ll}
Q_{\ell+5}^{(2)} &= \displaystyle \sum_{r=0}^{\ell+5} d_\zeta(r\varpi_2 + (\ell+5-r)\varpi_7) \\\\&= \displaystyle \underset{0}{\underbrace{\sum_{r=2}^{\ell+5} d_\zeta(r\varpi_2 + (\ell+5-r)\varpi_7)}} + \underset{=Q_{\ell+5}^{(7)}=0}{\underbrace{d_\zeta((\ell+5)\varpi_7)}} + d_\zeta(\varpi_2+(\ell+4)\varpi_7). \\\\
\end{array}$$
The positive root $\beta = \alpha_1+\alpha_2+ 2\alpha_3+3\alpha_4 + 3\alpha_5 + 2\alpha_6+\alpha_7$ 
(this is the root $\beta_{59}$ in the table of Appendix A) is such that $(\rho\mid\beta)=h-5$ and $(\varpi_2\mid\beta)=(\varpi_7\mid\beta)=1$, so that 
$$(\varpi_2\mid\beta)+(\ell+4)(\varpi_7\mid\beta) +(\rho\mid\beta)= 1+(l-h+4)\cdot 1+h-5 =l,$$
thus $\sin\frac{\pi}{l}((\varpi_2\mid\beta)+(\ell+4)(\varpi_7\mid\beta) +(\rho\mid\beta))=0$. 
Therefore the product $d_\zeta(\varpi_2+(\ell+4)\varpi_7)$ vanishes, and we have $Q_{\ell+5}^{(2)}=0$. 

The same method allows us to show that $Q_{\ell+k}^{(2)}=0$ for $1\leq k\leq 4$. We have
\[
Q_{\ell+k}^{(2)}= \displaystyle \underset{0}{\underbrace{\sum_{r=7-k}^{\ell+k} d_\zeta(r\varpi_2+(\ell+k-r)\varpi_7)}} 
+ \underset{0}{\underbrace{Q_{\ell+k}^{(7)}}}+\displaystyle\sum_{r=1}^{6-k} d_\zeta(r\varpi_2+(\ell+k-r)\varpi_7). 
\]
By inspecting the list of positive roots for $E_7$, 
we observe that each term of the second sum above vanishes: for $k=4$, we use roots $\beta_{58}$ and $\beta_{56}$; for $k=3,2$, roots $\beta_{56},\beta_{58}$ and $\beta_{60}$ each cancel a term; for $k=1$, we use roots $\beta_{62},\beta_{61},\beta_{60},\beta_{58}$ and $\beta_{56}$. 

When $k=\ell+8$, we have 
\[
\begin{array}{ll}Q_{\ell+8}^{(2)}&= \displaystyle \sum_{r=0}^{\ell+8} d_\zeta(r\varpi_2 + (\ell+8-r)\varpi_7) \\[5mm]
&= \displaystyle \sum_{r=0}^{\ell+7} d_\zeta(r\varpi_2 + (\ell+8-r)\varpi_7) + d_\zeta((\ell+8)\varpi_2). 
\end{array}
\]
The first sum above is zero, since 
\[
d_\zeta(r\varpi_2+(\ell+8-r)\varpi_7)=-d_\zeta((\ell+7-r)\varpi_2+(r+1)\varpi_7).
\]
The product $d_\zeta((\ell+8)\varpi_2)$ is zero, because the factor corresponding to $\beta_{52}$
 is zero. Therefore, we have $Q_{\ell+8}^{(2)}=0$.
The same method allows us to show that $Q_{\ell+k}^{(2)}=0$ for $9\leq k \leq h-1$. We have
$$\begin{array}{ll}
Q_{\ell+k}^{(2)}= \displaystyle \underset{0}{\underbrace{\sum_{r=0}^{\ell+7} d_\zeta(r\varpi_2 + (\ell+k-r)\varpi_7)}}+ \displaystyle \sum_{r=\ell+8}^{\ell+k} d_\zeta(r\varpi_2 + (\ell+k-r)\varpi_7).
\end{array}$$
By inspecting the list of positive roots for $E_7$, we observe that each term of the second sum 
above vanishes: for $k\in\llbracket 9,13\rrbracket$, we use respectively roots $\beta_{47},\beta_{44},\beta_{39},\beta_{36},\beta_{30}$; for $k\in\llbracket 14,17\rrbracket$, the following roots cancel terms: $\beta_{50},\beta_{46}$, $\beta_{43},\beta_{42},\beta_{32},\beta_{26},\beta_{20},\beta_{15},\beta_9,\beta_2$. This finishes our proof. \hfill $\Box$

\end{proof}

\begin{lemma}\label{e7one2}
We have $Q_\ell^{(2)}= 1$ and $Q_l^{(2)}=-1$.
\end{lemma}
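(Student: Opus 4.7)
The plan is to compute $Q_\ell^{(2)}$ and $Q_l^{(2)}$ by applying the $Q$-system at node~$4$, invoking the symmetry, periodicity and vanishing properties at nodes $3$, $4$, $5$ already established in Lemmas~\ref{e7n}, \ref{sym45} and \ref{e7zero2}. This avoids any further direct manipulation of the sum formula~(17) for $Q_k^{(2)}$ and its attendant root-by-root computations.

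The first step is to show that $Q_{\ell+1}^{(4)}$ and $Q_{l-1}^{(4)}$ both vanish. Rewriting the $Q$-system at node~$2$ as $Q_k^{(4)} = (Q_k^{(2)})^2 - Q_{k-1}^{(2)}Q_{k+1}^{(2)}$ and applying Lemma~\ref{e7zero2}: at $k=\ell+1$ both $Q_{\ell+1}^{(2)}$ and $Q_{\ell+2}^{(2)}$ vanish, so $Q_{\ell+1}^{(4)}=0$; at $k=l-1=\ell+h-1$ both $Q_{\ell+h-2}^{(2)}$ and $Q_{\ell+h-1}^{(2)}$ vanish (their indices lie in $\llbracket 1,h-1\rrbracket$), so $Q_{l-1}^{(4)}=0$ independently of the unknown $Q_l^{(2)}$, which is only ever multiplied by the vanishing factor $Q_{\ell+h-2}^{(2)}$.

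The second step is to apply the $Q$-system at node~$4$,
\[
(Q_k^{(4)})^2 = Q_{k-1}^{(4)} Q_{k+1}^{(4)} + Q_k^{(2)} Q_k^{(3)} Q_k^{(5)},
\]
at $k=\ell$ and at $k=l$. At $k=\ell$, Lemma~\ref{e7n}(iv) gives $Q_\ell^{(3)}=Q_0^{(3)}=1$ and Lemma~\ref{sym45}(ii) gives $Q_\ell^{(4)}=Q_0^{(4)}=1$ and $Q_\ell^{(5)}=Q_0^{(5)}=1$; together with $Q_{\ell+1}^{(4)}=0$ the relation collapses to $1=Q_\ell^{(2)}$. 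At $k=l$, Lemma~\ref{e7n}(i) gives $Q_l^{(3)}=Q_0^{(3)}=1$ and Lemma~\ref{sym45}(i) gives $Q_l^{(4)}=Q_0^{(4)}=1$ together with the anti-periodic value $Q_l^{(5)}=-Q_0^{(5)}=-1$; together with $Q_{l-1}^{(4)}=0$ the relation collapses to $1=-Q_l^{(2)}$, so $Q_l^{(2)}=-1$.

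There is no substantive obstacle here: the argument is essentially bookkeeping, once one notes that the quantum dimensions of KR modules satisfy the unrestricted $Q$-system so the relation above is valid at $k=l$. The only delicate point is the sign, which is produced by the minus sign in $Q_l^{(5)}=-1$ coming from Lemma~\ref{sym45}(i) and is precisely what distinguishes the $k=l$ case from the $k=\ell$ case.
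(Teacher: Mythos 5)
Your proof is correct, but it takes a genuinely different route from the paper's. The paper evaluates $Q_\ell^{(2)}$ and $Q_l^{(2)}$ directly from Chari's character formula for $W_k^{(2)}$: it pairs off most terms of the sum $\sum_r d_\zeta(r\varpi_2+(k-r)\varpi_7)$ using the affine Weyl group element $w$ introduced in the proof of Lemma~\ref{e7zero2}, identifies the surviving term $d_\zeta(\ell\varpi_7)=Q_\ell^{(7)}=1$ (resp. $d_\zeta(l\varpi_2)=(-1)^{\delta_2}=-1$), and kills the remaining stragglers one by one by exhibiting for each a positive root ($\beta_{63},\dots,\beta_{56}$ in the first case, $\beta_{50},\dots,\beta_{2}$ in the second) whose sine factor vanishes. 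You instead propagate already-known boundary values through the $Q$-system: Lemma~\ref{e7zero2} applied to the node-$2$ relation gives $Q_{\ell+1}^{(4)}=Q_{l-1}^{(4)}=0$ (correctly noting that the unknown $Q_l^{(2)}$ only enters multiplied by $Q_{\ell+16}^{(2)}=0$), and the node-$4$ relation at $k=\ell$ and $k=l$, combined with the values $Q_\ell^{(3)}=Q_\ell^{(4)}=Q_\ell^{(5)}=1$, $Q_l^{(3)}=Q_l^{(4)}=1$, $Q_l^{(5)}=-1$ from Lemmas~\ref{e7n} and~\ref{sym45}, then forces $Q_\ell^{(2)}=1$ and $Q_l^{(2)}=-1$. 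All the ingredients you invoke are established before Lemma~\ref{e7one2} in the paper's logical order, and the unrestricted $Q$-system is indeed valid at $k=l$, so there is no circularity; the sign analysis via $Q_l^{(5)}=-Q_0^{(5)}$ is exactly right. What your route buys is the elimination of all further root-by-root inspection beyond what Lemma~\ref{e7zero2} already required; what the paper's route buys is a computation that lives entirely at nodes $2$ and $7$ and is independent of the bookkeeping at nodes $3$, $4$, $5$.
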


\begin{proof}
We know from the proof of Lemma \ref{e7zero2} that
$$d_\zeta(r\varpi_2+(\ell-r)\varpi_7)=-d_\zeta((\ell+7-r)\varpi_2+(r-7)\varpi_7),$$
for $7 \leq r \leq \ell$. Therefore, we deduce from Formula (\ref{chari-E7}) that
\[
\begin{array}{ll}
Q_\ell^{(2)}&= \displaystyle \sum_{r=7}^{\ell-1} d_\zeta(r\varpi_2+(\ell-r)\varpi_7) 
+ \displaystyle \sum_{r=1}^6  d_\zeta(r\varpi_2+(\ell-r)\varpi_7) \\[5mm]
&\ \ \ +\ d_\zeta(\ell\varpi_7)\\
&= 0+ Q_\ell^{(7)} +  \displaystyle \sum_{r=1}^6  d_\zeta(r\varpi_2+(\ell-r)\varpi_7) .
\end{array}
\]
We already know that $Q_\ell^{(7)}=1$ from Lemma \ref{e717}. 
Again, by inspecting the list of positive roots for $E_7$, we observe that each term of the last sum 
vanishes, thanks to roots $\beta_{63},\beta_{62},\beta_{61},\beta_{60},\beta_{58},$ and $\beta_{56}$. Therefore, we have $Q_\ell^{(2)}=1$.

We have $l=\ell+h=\ell+18$, so that 
$$d_\zeta(\varpi_2+(\ell+18-r)\varpi_7)=-d_\zeta((\ell+7-r)\varpi_2+(r+11)\varpi_7).$$
This gives
$$\begin{array}{ll} Q_l^{(2)}= Q_{\ell+18}^{(2)} 
&= \displaystyle \underset{0}{\underbrace{\sum_{r=0}^{\ell+7} d_\zeta(r\varpi_2+(\ell+18-r)\varpi_7)}} 
+ d_\zeta((\ell+18)\varpi_2)\\&+ \displaystyle \sum_{r=\ell+8}^{\ell+17} d_\zeta(r\varpi_2+(\ell+18-r)\varpi_7).\end{array}$$
Again, by inspecting the list of positive roots for $E_7$, we observe that each term of the last sum above 
vanishes, thanks to roots  $\beta_{50},\beta_{46}$, $\beta_{43},\beta_{42},\beta_{32},\beta_{26},\beta_{20},\beta_{15},$ $\beta_9,$ and $\beta_2$. For example, the positive root $\alpha_2$ cancels out the product $d_\zeta((\ell+17)\varpi_2+\varpi_7)$, 
since we have $(l-h+17)\cdot 1 + 0 + 1= l$. 
Therefore, we have $Q_l^{(2)}=d_\zeta(l\varpi_2)=(-1)^{\delta_2}=-1$ since $\delta_2$ is odd. \hfill $\Box$

\end{proof}

\begin{lemma}
We have $Q_k^{(2)}>0$ for $ k\in\llbracket 0,  \lfloor \frac{\ell}{2}\rfloor\rrbracket$ and $Q_{\ell-k}^{(2)}=Q_k^{(2)}$ for $k \in\llbracket 0, \ell\rrbracket$.

\end{lemma}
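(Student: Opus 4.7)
The plan is to prove positivity directly from Chari's formula together with Proposition~\ref{zeta+}, and then obtain the symmetry by establishing a single base-case equality from the $Q$-system at the unique neighbouring node~4 and propagating it via the $Q$-system at node~2.

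For positivity, formula~(\ref{chari-E7}) writes $Q_k^{(2)}=\sum_{r=0}^{k}d_\zeta(r\varpi_2+(k-r)\varpi_7)$. When $k\le\lfloor\ell/2\rfloor$, each summand weight $r\varpi_2+(k-r)\varpi_7$ satisfies $a_2 r+a_7(k-r)=r+k\le 2k\le\ell$ and therefore lies in the fundamental alcove $C_\ell^0$. Proposition~\ref{zeta+} then makes every term strictly positive, so $Q_k^{(2)}>0$.

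For the symmetry I would first isolate the base case $Q_1^{(2)}=Q_{\ell-1}^{(2)}$ (the cases $\ell\le 2$ being immediate). The $Q$-system at node~4 reads $(Q_k^{(4)})^2=Q_{k-1}^{(4)}Q_{k+1}^{(4)}+Q_k^{(2)}Q_k^{(3)}Q_k^{(5)}$. Writing this down at $k=1$ and at $k=\ell-1$ and plugging in the symmetries $Q_{\ell-k}^{(3)}=Q_k^{(3)}$, $Q_{\ell-k}^{(4)}=Q_k^{(4)}$, $Q_{\ell-k}^{(5)}=Q_k^{(5)}$ provided by Lemmas~\ref{e7n} and~\ref{sym45}, the two equations collapse to $(Q_1^{(2)}-Q_{\ell-1}^{(2)})Q_1^{(3)}Q_1^{(5)}=0$. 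Since $a_3=a_5=3$, Corollary~\ref{Qk+} gives $Q_1^{(3)},Q_1^{(5)}>0$ for $\ell\ge 3$, and the base case follows.

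To finish, I would induct on $k$ using the $Q$-system at node~2, $(Q_k^{(2)})^2=Q_{k-1}^{(2)}Q_{k+1}^{(2)}+Q_k^{(4)}$. Assuming the symmetry holds at indices $k-1$ and $k$, comparing this identity at positions $k$ and $\ell-k$ and using $Q_{\ell-k}^{(4)}=Q_k^{(4)}$ yields $Q_{k-1}^{(2)}(Q_{k+1}^{(2)}-Q_{\ell-k-1}^{(2)})=0$. The factor $Q_{k-1}^{(2)}$ is nonzero: directly positive when $k-1\le\lfloor\ell/2\rfloor$, and otherwise equal by the inductive hypothesis to $Q_{\ell-k+1}^{(2)}$, which itself falls into that direct-positivity range; hence $Q_{k+1}^{(2)}=Q_{\ell-k-1}^{(2)}$ and the induction closes. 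The main obstacle is really the base case: the $Q$-system at node~2 cannot start the induction on its own, so one must bring in the $Q$-system at node~4 together with the positivity of $Q_1^{(3)}$ and $Q_1^{(5)}$ provided by Corollary~\ref{Qk+}.
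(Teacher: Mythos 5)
Your proof is correct, and while the positivity step and the inductive propagation via the $Q$-system at node $2$ coincide with the paper's argument, your treatment of the base case $Q_1^{(2)}=Q_{\ell-1}^{(2)}$ is genuinely different. The paper proves this equality by brute force from Chari's formula: it kills the terms $d_\zeta(r\varpi_2+(\ell-1-r)\varpi_7)$ for $2\le r\le 7$ by exhibiting explicit positive roots ($\beta_{56},\dots,\beta_{63}$) whose contribution to the Weyl-type product vanishes, and then verifies $d_\zeta(\varpi_2+(\ell-2)\varpi_7)=d_\zeta(\varpi_2)$ by a careful sign count over the $27$ positive roots with $(\varpi_7\mid\beta)=1$. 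You instead extract the base case from the $Q$-system at the branch node $4$, comparing the relation at $k=1$ and $k=\ell-1$ and using the symmetries at nodes $3,4,5$ from Lemmas~\ref{e7n} and~\ref{sym45} together with $Q_1^{(3)}Q_1^{(5)}\neq 0$ from Corollary~\ref{Qk+}. This is legitimate --- those lemmas are proved without any reference to node $2$, so there is no circularity --- and it is shorter and avoids the root tables entirely; what the paper's computation buys in exchange is independence from the auxiliary nodes and a technique that recurs throughout \S\ref{proofE7}. Two small points you should make explicit: the induction starting at $k=1$ also requires symmetry at index $0$, i.e.\ $Q_\ell^{(2)}=Q_0^{(2)}=1$, and your dismissal of $\ell\le 2$ likewise rests on that equality; both are supplied by Lemma~\ref{e7one2}, which precedes this statement, so the argument closes, but the dependence should be stated.
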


\begin{proof}
We have $a_2=2$, so it follows from Proposition~\ref{Qk+} that 
$Q_k^{(2)}>0$ for $ k\in\llbracket 0,  \lfloor \frac{\ell}{2}\rfloor\rrbracket$.
Let us consider $Q_{\ell-1}^{(2)}$. We have
$$\begin{array}{ll}Q_{\ell-1}^{(2)} &= \displaystyle \underset{0}{\underbrace{\sum_{r=8}^\ell d_\zeta(r\varpi_2
+(\ell-r-1)\varpi_7)}}+d_\zeta((\ell-1)\varpi_7)+\displaystyle   d_\zeta(\varpi_2+(\ell-2)\varpi_7) \\
&+ \ds\sum_{r=2}^7 d_\zeta(r\varpi_2+(\ell-r-1)\varpi_7).\end{array}$$
We already know that $d_\zeta(\varpi_7)=d_\zeta((\ell-1)\varpi_7)$.
Again, by inspecting the list of positive roots for $E_7$, we observe that each term of the last sum above 
vanishes, thanks to roots $\beta_{63},\beta_{62},\beta_{61},\beta_{60},\beta_{58},$ and $\beta_{56}$. 

Therefore, we need to show that $d_\zeta(\varpi_2+(\ell-2)\varpi_7)=d_\zeta(\varpi_2).$
We have
$$
\begin{array}{ll}d_\zeta(\varpi_2+(\ell-2)\varpi_7)
=\displaystyle\prod_{\beta\in\Phi^+}  
\ds \frac{\sin\frac{\pi}{l}((l-20)(\varpi_7\mid\beta)+(\varpi_2\mid\beta)+(\rho\mid\beta))}{\sin\frac{\pi}{l}(\rho\mid\beta)}\\\\
=\ds\prod_{\beta:(\varpi_7\mid\beta)=0} \frac{\sin\frac{\pi}{l}((\varpi_2\mid\beta)+(\rho\mid\beta))}{\sin\frac{\pi}{l}(\rho\mid\beta)} 
\prod_{\beta:(\varpi_7\mid\beta)=1}  \ds  \frac{\sin\frac{\pi}{l}(l-20+(\varpi_2\mid\beta)+(\rho\mid\beta))}{\sin\frac{\pi}{l}(\rho\mid\beta)}\\ \\
= -\ds\displaystyle\prod_{\beta:(\varpi_7\mid\beta)=0} \frac{\sin\frac{\pi}{l}((\varpi_2\mid\beta)+(\rho\mid\beta))}{\sin\frac{\pi}{l}(\rho\mid\beta)} 
\prod_{\beta:(\varpi_7\mid\beta)=1}  \displaystyle \frac{\sin\frac{\pi}{l}(-20+(\varpi_2\mid\beta)+(\rho\mid\beta))}{\sin\frac{\pi}{l}(\rho\mid\beta)}.    \end{array}              $$
For $(\varpi_7\mid\beta)=1$, here are the values of $(\varpi_2\mid\beta)+(\rho\mid\beta)$, when $\beta\in\Phi^+$:
$$\underset{(\varpi_2\mid\beta)=2}{\underbrace{19,18,17,16,15,14}},\underset{(\varpi_2\mid\beta)=1}{\underbrace{14,13,12,12,11,11,10,10,10,9,9,8,8,7,6}},\underset{(\varpi_2\mid\beta)=0}{\underbrace{5,6,4,3,2,1}}.$$
Therefore, replacing $(\varpi_2\mid\beta)+(\rho\mid\beta)$ by $-20+(\varpi_2\mid\beta)+(\rho\mid\beta)$ yields the opposite of each value above. Since $\delta_7=27$, the second product above is such that
$$\begin{array}{l}
d_\zeta(\varpi_2+(\ell-2)\varpi_7)=\displaystyle\prod_{\beta\in\Phi^+}  \displaystyle \frac{\sin\frac{\pi}{l}((l-20)(\varpi_7\mid\beta)+(\varpi_2\mid\beta)+(\rho\mid\beta))}{\sin\frac{\pi}{l}(\rho\mid\beta)}\\\\=\displaystyle  -\displaystyle\displaystyle\prod_{\beta:(\varpi_7\mid\beta)=0} \frac{\sin\frac{\pi}{l}((\varpi_2\mid\beta)+(\rho\mid\beta))}{\sin\frac{\pi}{l}(\rho\mid\beta)}  \cdot \left( -\prod_{\beta:(\varpi_7\mid\beta)=1}  \displaystyle \frac{\sin\frac{\pi}{l}((\varpi_2\mid\beta)+(\rho\mid\beta))}{\sin\frac{\pi}{l}(\rho\mid\beta)}\right)\\\\
= d_\zeta(\varpi_2).
\end{array}$$
We can then conclude that $Q_{\ell-1}^{(2)}=Q_1^{(2)}>0$. 
Let us now proceed by induction on $k\ge 1$ by using the $Q$-system at node 2 for each $k\in\llbracket 1,\ell \rrbracket$, and Lemma \ref{sym45}(ii). Assuming that $Q_{\ell-k+1}^{(2)}=Q_{k-1}^{(2)}$ and $Q_{\ell-k}^{(2)}=Q_k^{(2)}$, we have:
$$\begin{array}{ll}
Q_{\ell-k}^{(4)}&= (Q_{\ell-k}^{(2)})^2 - Q_{\ell-k-1}^{(2)} Q_{\ell-k+1}^{(2)}\\\\
=Q_k^{(4)} &= (Q_k^{(2)})^2 - Q_{\ell-k-1}^{(2)} Q_{k-1}^{(2)} \\\\
&=  (Q_k^{(2)})^2 - Q_{k+1}^{(2)} Q_{k-1}^{(2)}.
\end{array}$$
Since by induction $Q_{k-1}^{(2)} >0$, we get $Q_{\ell-k-1}^{(2)}=Q_{k+1}^{(2)},$  and $Q_{\ell-k-1}^{(2)}>0.$

We can finally conclude that $Q_{\ell-k}^{(2)}=Q_k^{(2)}$ for $k\in\llbracket 0,\ell\rrbracket$. Furthermore, we have $Q_k^{(2)}>0$ for $k\in\llbracket 0,\ell\rrbracket$. \hfill $\Box$

\end{proof}

\begin{lemma}\label{e745zero}
For $i=4,5$, we have $Q_{\ell+k}^{(i)}=0$ for all $k\in\llbracket 1,h-1\rrbracket$. 
Moreover, the sequence $Q_k^{(4)}$ is $l$-periodic and $Q_{l+k}^{(5)}=-Q_k^{(5)}$ for all $k$.

\end{lemma}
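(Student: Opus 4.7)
For $Q^{(4)}$, I would read the $Q$-system at node $2$ in the form
\[
Q_k^{(4)} = (Q_k^{(2)})^2 - Q_{k-1}^{(2)} Q_{k+1}^{(2)},
\]
and substitute the values from Lemmas~\ref{e7zero2} and~\ref{e7one2}: for $k=\ell+j$ with $j\in\llbracket 1,h-1\rrbracket$ one has $Q_k^{(2)}=0$, and at least one of $Q_{k\pm 1}^{(2)}$ is also zero, the boundary cases $j=1$ and $j=h-1$ being taken care of by the fact that the surviving factor $\pm 1$ is multiplied by zero. Hence $Q_{\ell+j}^{(4)}=0$ throughout the window.

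For $Q^{(5)}$, I would use the $Q$-system at node $5$, namely $(Q_k^{(5)})^2 = Q_{k-1}^{(5)} Q_{k+1}^{(5)} + Q_k^{(4)} Q_k^{(6)}$. On the window $k\in\llbracket \ell+1,l-1\rrbracket$, both $Q_k^{(4)}$ (just proved) and $Q_k^{(6)}$ (Lemma~\ref{e7n}) vanish, so the relation reduces to $(Q_k^{(5)})^2 = Q_{k-1}^{(5)} Q_{k+1}^{(5)}$. Applied at $k=\ell$, using the boundary values $Q_\ell^{(4)}=Q_\ell^{(5)}=Q_\ell^{(6)}=1$ coming from the symmetries of Lemmas~\ref{e7n} and~\ref{sym45} and the positivity $Q_1^{(5)}=Q_{\ell-1}^{(5)}>0$ from Corollary~\ref{Qk+}, the equation $1 = Q_1^{(5)} Q_{\ell+1}^{(5)} + 1$ yields $Q_{\ell+1}^{(5)}=0$. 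A one-step induction then propagates this: if $Q_{k-1}^{(5)}=0$ for some $k\in\llbracket \ell+2,l-1\rrbracket$, the simplified relation gives $(Q_k^{(5)})^2=0$, whence $Q_k^{(5)}=0$.

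For the (anti)periodicity, Lemma~\ref{sym45}(i) already supplies the relations $Q_{l+k}^{(4)}=Q_k^{(4)}$ and $Q_{l+k}^{(5)}=-Q_k^{(5)}$ on $k\in\llbracket 0,\ell\rrbracket$. On the complementary range $k\in\llbracket \ell+1,l-1\rrbracket$ both sides vanish: the $l$-shifted quantities $Q_{l+k}^{(4)}$ and $Q_{l+k}^{(5)}$ can be treated by the same arguments as in the first two paragraphs applied at the shifted indices, using the (anti)periodicity of $Q^{(1)}, Q^{(3)}, Q^{(6)}, Q^{(7)}$ from Lemmas~\ref{e717} and~\ref{e7n}. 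Iteration then extends the relations to all $k\in\mathbb{Z}$.

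The main obstacle is the propagation step for $Q^{(5)}$: once the forcing term $Q_k^{(4)} Q_k^{(6)}$ vanishes, the relation $(Q_k^{(5)})^2=Q_{k-1}^{(5)}Q_{k+1}^{(5)}$ is by itself a rather weak constraint, and an entirely analogous attempt at node~$4$ (using the $Q$-system there) stalls after two steps. What rescues the induction is the quadratic form of the left-hand side at node~$5$, so that the vanishing of a single neighbour $Q_{k-1}^{(5)}$ is enough to force $Q_k^{(5)}=0$; a merely linear version of the relation would not cover the full window of length $h-1=17$.
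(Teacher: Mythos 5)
Your argument at node $5$ is essentially the paper's: reduce the $Q$-system at node $5$ to $(Q_{\ell+k}^{(5)})^2=Q_{\ell+k-1}^{(5)}Q_{\ell+k+1}^{(5)}$ using $Q_{\ell+k}^{(6)}=0$, extract $Q_{\ell+1}^{(5)}=0$ from the relation at $k=\ell$, and propagate by exploiting the square on the left-hand side. At node $4$ you genuinely diverge: the paper mimics the node-$5$ argument with the $Q$-system at node $4$ (whose product term vanishes because $Q_{\ell+k}^{(2)}=Q_{\ell+k}^{(3)}=0$), which forces it to prove $Q_1^{(4)}\neq 0$ via Kleber's decomposition of $\chi(W_1^{(4)})$ together with direct computations for $\ell\le 3$. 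Your use of the node-$2$ relation $Q_k^{(4)}=(Q_k^{(2)})^2-Q_{k-1}^{(2)}Q_{k+1}^{(2)}$, combined with Lemmas \ref{e7zero2} and \ref{e7one2}, gives the vanishing of $Q_{\ell+j}^{(4)}$ on the whole window with no nondegeneracy hypothesis and no case analysis on small $\ell$; that is a real simplification. (You and the paper are about equally terse on extending the (anti)periodicity from the range covered by Lemma \ref{sym45}(i) to all of $\Z$.)

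The one genuine gap is your justification of $Q_1^{(5)}\neq 0$. Corollary \ref{Qk+} gives $Q_k^{(5)}>0$ only for $0\le k\le \ell/a_5=\ell/3$, so it yields $Q_1^{(5)}>0$ only when $\ell\ge 3$. For $\ell=2$ (and for $\ell=1$, unless one notices that symmetry gives $Q_1^{(5)}=Q_\ell^{(5)}=1$) your argument produces nothing, and without $Q_1^{(5)}\neq 0$ the relation $1=Q_1^{(5)}Q_{\ell+1}^{(5)}+1$ does not force $Q_{\ell+1}^{(5)}=0$, so the induction never starts. The paper closes exactly this hole by writing $Q_1^{(5)}=d_\zeta(\varpi_5)+d_\zeta(\varpi_1+\varpi_7)+2d_\zeta(\varpi_2)+2d_\zeta(\varpi_7)$ (Kleber's formula (\ref{kleb5})), applying Proposition \ref{zeta+} for $\ell\ge 3$, and checking $\ell=1,2$ by direct computation. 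You need to supply an equivalent argument for the small-$\ell$ cases.
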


\begin{proof}
\textbf{(a) Node 5:} According to Kleber \cite{Kle}, we have
\begin{equation}\label{kleb5} Q_1^{(5)} = d_\zeta(\varpi_5)+d_\zeta(\varpi_1+\varpi_7)+2d_\zeta(\varpi_2)+2d_\zeta(\varpi_7).\end{equation}
We have $a_1=a_2=2$, $a_7=1$ and $a_5=3$. If $\ell\geq 3$, the dominant weights occurring in (\ref{kleb5}) are all in $C_\ell^0$. Therefore, Proposition \ref{zeta+} implies that $Q_1^{(5)}$ is a sum of positive terms for $\ell \geq 3$. For $\ell=1, 2$, one can check by direct computation that $Q_1^{(5)}>0$.
The $Q$-system at node 5 reads
\begin{equation}\label{e75zero}(Q_{\ell+k}^{(5)})^2= Q_{\ell+k-1}^{(5)} Q_{\ell+k+1}^{(5)} +0,\end{equation}
for all $k\in\llbracket 1,h-1\rrbracket$, because $Q_{\ell+k}^{(6)}=0$. Moreover, we have
$$ \begin{array}{ll} (Q_\ell^{(5)})^2&=1=Q_{\ell+1}^{(5)} Q_{\ell-1}^{(5)} + 1\\\\&= Q_{\ell+1}^{(5)} \underset{\neq 0}{\underbrace{Q_1^{(5)}}} +1,
\end{array}$$
which implies that $Q_{\ell+1}^{(5)}=0$. We can then use (\ref{e75zero}) to show by induction on $k$ that $Q_{\ell+k}^{(5)}=0$ for all $k\in\llbracket 1,h-1\rrbracket$. We use again the $Q$-system to deduce that $Q_{l+k}^{(5)}=-Q_k^{(5)}$ for all $k$.

\medskip
\textbf{(b) Node 4:} According to Kleber \cite{Kle}, we have
$$\begin{array}{ll}Q_1^{(4)} &= 2+ 4d_\zeta(\varpi_1)+d_\zeta(2\varpi_1)+3d_\zeta(\varpi_3)+d_\zeta(\varpi_4)+4d_\zeta(\varpi_6)+d_\zeta(2\varpi_7) \\&\quad+ d_\zeta(\varpi_1+\varpi_6)+2d_\zeta(\varpi_2+\varpi_7).
\end{array}$$
The same method as for node 5 gives $Q_1^{(4)}>0$ for $\ell\geq 4$, by Proposition \ref{zeta+}. For $\ell=1,2,3$, one can check by direct computation that $Q_1^{(4)}>0.$ We then use the $Q$-system at node 4  to conclude that $Q_{\ell+k}^{(4)}=0$ for all $k\in\llbracket 1,h-1\rrbracket$, and deduce the full $l$-periodicity at node 4. \hfill $\Box$

\end{proof}

Finally, we can only prove the following partial positivity result at nodes 4 and 5.
\begin{lemma}
We have 
\[
\begin{array}{lr}
Q_k^{(4)}>0,& (k\in \llbracket 0,\frac{\ell}{4}\rrbracket\cup\llbracket\frac{3\ell}{4},\ell\rrbracket),\\[5mm] 
Q_k^{(5)}>0,& (k\in\llbracket 0,\frac{\ell}{3}\rrbracket\cup\llbracket\frac{2\ell}{3},\ell\rrbracket).
\end{array}
\]
\end{lemma}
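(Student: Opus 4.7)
The plan is essentially bookkeeping: this lemma is a direct consequence of Corollary \ref{Qk+} (which gives positivity on $\llbracket 0,\ell/a_i\rrbracket$) combined with the symmetry property of Lemma \ref{sym45}(ii). In type $E_7$ one reads off from the highest root
\[
\theta = 2\alpha_1 + 2\alpha_2 + 3\alpha_3 + 4\alpha_4 + 3\alpha_5 + 2\alpha_6 + \alpha_7
\]
that $a_4 = 4$ and $a_5 = 3$, so the bounds appearing in Corollary \ref{Qk+} are precisely $\ell/4$ and $\ell/3$.

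First, I would apply Corollary \ref{Qk+} at node 4 to obtain $Q_k^{(4)} > 0$ for every integer $k \in \llbracket 0, \ell/4 \rrbracket$, and at node 5 to obtain $Q_k^{(5)} > 0$ for every integer $k \in \llbracket 0, \ell/3 \rrbracket$. This handles the left-hand intervals in the statement.

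Next, I would invoke Lemma \ref{sym45}(ii), which asserts $Q_{\ell-k}^{(i)} = Q_k^{(i)}$ for $i = 4, 5$ and $k\in\llbracket 0,\ell\rrbracket$. Reflecting the positivity intervals obtained above about $\ell/2$ yields $Q_k^{(4)} > 0$ on $\llbracket 3\ell/4, \ell \rrbracket$ and $Q_k^{(5)} > 0$ on $\llbracket 2\ell/3, \ell \rrbracket$, which completes the proof.

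There is no genuine obstacle at this stage; the real difficulty is what this lemma deliberately leaves open, namely positivity on the middle ranges $\llbracket \ell/4, 3\ell/4 \rrbracket$ at node 4 and $\llbracket \ell/3, 2\ell/3 \rrbracket$ at node 5. Accessing those ranges via the $Q$-system would require $k$-log-concavity of the quantum-dimension sequences at neighbouring nodes, which (as noted in the remark after Theorem \ref{knsE678}) is only conjectural, and is the reason the full positivity statement (iii) of Conjecture \ref{KNS-conj} is not settled here in types $E_7$ and $E_8$.
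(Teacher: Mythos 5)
Your proof is correct and matches the paper's argument exactly: positivity on $\llbracket 0,\ell/a_i\rrbracket$ from Corollary~\ref{Qk+} with $a_4=4$, $a_5=3$, followed by the symmetry of Lemma~\ref{sym45}(ii) to cover the reflected intervals. Nothing to add.
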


\begin{proof}
Since $a_4=4$, Corollary \ref{Qk+} implies that $Q_k^{(4)}>0$ for $0\leq k\leq \ell/4$.
Similarly, since $a_5 = 3$, we have $Q_k^{(5)}>0$ for $0\leq k \leq \ell/3$. 
We then use Lemma \ref{sym45}(ii) to conclude. \hfill $\Box$

\end{proof}

This completes the proof of Theorem \ref{knsE678} in type $E_7$.

\begin{remark}\label{remark-inf-logc}
{\rm
Following \cite{Br}, define
$\L$ to be the operator mapping the sequence of real numbers 
$(a_k)_{0\le k\le n}$ to the sequence $(b_k)_{0\le k\le n}$ defined by
\[
 b_k := a_k^2 - a_{k+1}a_{k-1},\qquad (0\le k\le n),
\]
where we have set $a_{-1}=a_{n+1} = 0$.
Hence $(a_k)$ is log-concave if $\L(a_k)$ is non-negative.
We say that $(a_k)$ is \emph{$i$-fold log-concave} if the $i$th iterate
$\L^i(a_k)$ is non-negative, and \emph{infinitely log-concave} if
it is $i$-fold log-concave for all $i\in \N$.
It is easy to deduce from the $Q$-system that the non-negativity of 
$Q_k^{(i)}$ for $0\le k\le \ell$ and $i=4,5$ amounts to the fact
that the sequence 
\begin{equation}\label{seq-above}
Q_k^{(7)} = d_\zeta(k\varpi_7),\qquad (0\le k \le \ell)
\end{equation}
is $3$-fold and $2$-fold log-concave. In fact we conjecture that
this sequence is infinitely log-concave. 

It was proved by Br\"and\'en \cite{Br} that a sufficient condition
for $(a_k)_{0\le k\le n}$ to be infinitely log-concave is that the
polynomial
\[
 P(x) = \sum_{k=0}^n a_k x^k
\]
has only real and negative roots. We have checked numerically that this
condition is satisfied for the above sequence (\ref{seq-above}) for 
$\ell \le 11$, but unfortunately it fails for $\ell =12$.

We believe that this log-concavity problem could be of independent interest
(see for example \cite{MNS,Br} for similar problems arising in combinatorics).
}
\end{remark}

\subsection{Type $E_8$}\label{proofE8}
In this section, we shall frequently use the list of positive roots of $E_8$. 
For the convenience of the reader, this list is reproduced below in the table of Appendix~B.
In this table each root is numbered from $1$ to $120$, and we shall denote by $\beta_n$ the
the root with number $n$.

We first consider the extremal node number 8 (see Figure 1).

\begin{lemma}\label{e88}
At node 8 in type $E_8$, the following properties hold:
\begin{enumerate}[(i)]
\item The sequence $Q_k^{(8)}$ is $l$-periodic.
\item (symmetry) We have $Q_{\ell-k}^{(8)}= Q_k^{(8)}$ for all $k\in\llbracket 0,\ell\rrbracket$. Moreover, $Q_\ell^{(8)}=1$.
\item (zeroes) We have $Q_{k}^{(8)}=0$ for all $k\in\llbracket \ell+1,l-1\rrbracket$.
\item (positivity) We have $Q_k^{(8)}>0$ for all $k\in\llbracket 0,\ell\rrbracket$.
\item The sequence $Q_k^{(8)}, 0\leq k \leq \ell$, is strictly log-concave.
\end{enumerate}
\end{lemma}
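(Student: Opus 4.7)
The plan is to transcribe the argument for node $2$ in type $E_6$ (Proposition~\ref{E6ext}(b)) essentially verbatim, since Chari's formula
$$\chi(W_k^{(8)}) = \sum_{r=0}^k \chi(r\varpi_8)$$
has the same shape as its $E_6$ counterpart, and since the highest root of $E_8$ is $\theta = \varpi_8$ (one verifies directly from the Bourbaki tables that $(\theta\mid\alpha_i) = \delta_{i,8}$; equivalently, $a_8 = 2$ and the affine node in the extended Dynkin diagram is attached to node~$8$). My strategy is to first establish strict log-concavity and positivity of $(Q_k^{(8)})_{0 \le k \le \lfloor \ell/2\rfloor}$, then propagate these to the whole interval $\llbracket 0, \ell\rrbracket$ via an affine-Weyl-group symmetry, and finally deduce the vanishing on $\llbracket \ell+1, l-1\rrbracket$ and the $l$-periodicity.

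From Chari's formula, $Q_k^{(8)} = \sum_{r=0}^k d_\zeta(r\varpi_8)$. Since $a_8 = 2$, Proposition~\ref{l1.8} shows that $(d_\zeta(r\varpi_8))_{0 \le r \le \lfloor \ell/2\rfloor}$ is positive and strictly log-concave, and Proposition~\ref{plogc}(iii) then transfers both properties to the partial sums $(Q_k^{(8)})_{0 \le k \le \lfloor \ell/2\rfloor}$. Next, using $\theta = \varpi_8$, $(\varpi_8\mid\theta) = 2$, $(\rho\mid\theta) = h-1$ and $l = \ell + h$, a short computation of the level-$l$ dot action yields
$$s_0 \cdot (k\varpi_8) = (\ell + 1 - k)\varpi_8,$$
so Proposition~\ref{aweyl} gives the analog of Lemma~\ref{zetaE6}:
$$d_\zeta(k\varpi_8) = -d_\zeta((\ell+1-k)\varpi_8), \qquad (k\in\llbracket 0, \ell+1\rrbracket).$$
The same telescoping computation as in the $E_6$ proof then produces the symmetry $Q_{\ell-k}^{(8)} = Q_k^{(8)}$ on $\llbracket 0, \ell\rrbracket$ (property (ii), with $Q_\ell^{(8)} = Q_0^{(8)} = 1$), extends positivity to $\llbracket 0, \ell\rrbracket$ (property (iv)) and, via Proposition~\ref{plogc}(ii), upgrades strict log-concavity to the whole range (property (v)); the hypothesis $Q_{\lfloor\ell/2\rfloor-1}^{(8)} < Q_{\lfloor\ell/2\rfloor}^{(8)}$ needed for that proposition is immediate since the partial sums are increasing by positivity of the $d_\zeta(r\varpi_8)$.

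For the vanishing (iii), specializing the symmetry at $k=0$ gives $d_\zeta((\ell+1)\varpi_8) = -1$, hence $Q_{\ell+1}^{(8)} = Q_\ell^{(8)} - 1 = 0$. For $k \in \llbracket 2, h-1\rrbracket$, Lemma~\ref{lee} applied at node~$8$ with $r = h-k$ supplies $\beta \in \Phi^+$ with $(\varpi_8\mid\beta) = 1$ and $(\rho\mid\beta) = h-k$, so that $(\ell+k)(\varpi_8\mid\beta) + (\rho\mid\beta) = l$ and the corresponding factor in $d_\zeta((\ell+k)\varpi_8)$ vanishes; a trivial induction on $k$ via $Q_{\ell+k}^{(8)} = Q_{\ell+k-1}^{(8)} + d_\zeta((\ell+k)\varpi_8)$ then gives $Q_{\ell+k}^{(8)} = 0$ for all $k \in \llbracket 1, h-1\rrbracket$. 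For the periodicity~(i), since $\delta_8$ is even in $E_8$, formula~(\ref{sin}) shows that $d_\zeta((l+k)\varpi_8) = d_\zeta(k\varpi_8)$ for all $k \in \Z$, and combining this with $Q_{l-1}^{(8)} = 0$ and $d_\zeta(l\varpi_8) = 1$ the partial-sum identity telescopes into $Q_{l+k}^{(8)} = Q_k^{(8)}$ for every $k\in\Z$. No substantive obstacle is expected: once $\theta = \varpi_8$ and the parity of $\delta_8$ are recorded, every step of the proof is a line-by-line transcription of the $E_6$ argument at node~$2$.
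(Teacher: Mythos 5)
Your proposal is correct and follows exactly the route the paper takes: the paper's own proof of Lemma~\ref{e88} consists precisely of the remark that $Q_k^{(8)}=\sum_{r=0}^k d_\zeta(r\varpi_8)$ with $\varpi_8=\theta$ and $a_8=2$, after which it declares the argument identical to that of Proposition~\ref{E6ext} at node~2, which is what you carry out in detail. Your write-up is in fact slightly more careful than the paper's at the vanishing step, since you treat $k=1$ via the dot-action identity and only invoke Lemma~\ref{lee} for $r\le h-2$.
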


\begin{proof}
By Formula (\ref{chari-E88}), we have $Q_k^{(8)}=\displaystyle \sum_{r=0}^k d_\zeta(r\varpi_8)$, 
where $\varpi_8=\theta$ is the longest root in type $E_8$. 
The proof is the same as the proof of Proposition \ref{E6ext} at node 2.\hfill $\Box$ 
\end{proof}

Let us now consider the extremal node 1.

\begin{lemma}\label{e81}
Properties $(i)$ to $(iv)$ of Lemma \ref{e88} also hold at node 1.
Moreover, we have
$
Q_k^{(1)} < Q_{k+1}^{(1)}$ for $k\in \llbracket 0,\lfloor \ell/2\rfloor-1\rrbracket. 
$

\end{lemma}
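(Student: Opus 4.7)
The plan is to mirror the proof used for node 8 in Lemma \ref{e88} (which itself follows the template of Proposition \ref{E6ext} at node 2 in $E_6$), adapted to the two features that distinguish node 1 in $E_8$: the character formula (\ref{chari-E8}) involves a two-parameter sum
$$
Q_k^{(1)} = \sum_{\substack{r,\,s \ge 0 \\ r + s \le k}} d_\zeta(r\varpi_1 + s\varpi_8),
$$
and the highest root is $\theta = \varpi_8 \neq \varpi_1$, so the reflection $s_0$ no longer acts simply along the direction of $\varpi_1$.

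First I would dispatch positivity on the base range and the strict monotonicity. Since $a_1 = 2$, Corollary~\ref{Qk+} gives $Q_k^{(1)} > 0$ for $0 \le k \le \lfloor \ell/2 \rfloor$. For monotonicity, the telescoping identity
$$
Q_{k+1}^{(1)} - Q_k^{(1)} = \sum_{r + s = k+1} d_\zeta(r\varpi_1 + s\varpi_8),
$$
combined with the observation that each weight $r\varpi_1 + s\varpi_8$ with $r + s = k + 1 \le \lfloor \ell/2 \rfloor$ satisfies $a_1 r + a_8 s = 2(r + s) \le \ell$ and hence lies in $C_\ell^0$, makes every summand strictly positive by Proposition~\ref{zeta+}. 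This yields the claimed inequality $Q_k^{(1)} < Q_{k+1}^{(1)}$ on $\llbracket 0, \lfloor \ell/2 \rfloor - 1\rrbracket$.

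The core ingredient for the remaining properties is a sign relation obtained from the level $l$ dot action of $s_0$. Using $\theta = \varpi_8$, $(\rho \mid \theta) = h - 1 = 29$, and $(\varpi_1 \mid \varpi_8) = (\varpi_8 \mid \varpi_8) = 2$, a direct computation gives
$$
s_0 \cdot (r \varpi_1 + s \varpi_8) = r \varpi_1 + (\ell + 1 - 2r - s) \varpi_8,
$$
so Proposition~\ref{aweyl} yields
$$
d_\zeta(r\varpi_1 + s\varpi_8) = -d_\zeta\left(r\varpi_1 + (\ell + 1 - 2r - s)\varpi_8\right)
$$
whenever both weights are dominant. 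This is the analogue for node 1 in $E_8$ of Lemma~\ref{zetaE6} for node 2 in $E_6$. Pairing summands under the involution $s \mapsto \ell + 1 - 2r - s$ for each fixed $r$ (with any fixed point, which occurs when $\ell + 1 - 2r$ is odd, automatically contributing zero) gives the symmetry $Q_{\ell - k}^{(1)} = Q_k^{(1)}$ and in particular $Q_\ell^{(1)} = 1$. Combined with the base-range positivity above, this extends positivity to all of $\llbracket 0, \ell\rrbracket$. The $l$-periodicity and the vanishing $Q_k^{(1)} = 0$ for $k \in \llbracket \ell + 1, l - 1\rrbracket$ are then handled together, following Lemma~\ref{e7zero2}: the $s_0$-pairing kills most terms, and for the unpaired "diagonal" terms one exhibits explicit positive roots $\beta$ of $E_8$ (drawn from the table of Appendix~B) for which $(\lambda + \rho \mid \beta) \equiv 0 \pmod{l}$, forcing the corresponding $\sin$-factor in~(\ref{sin}) to vanish. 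Periodicity then uses also that every $\delta_i$ is even in $E_8$.

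The main obstacle is this last step. The sum defining $Q_k^{(1)}$ is two-dimensional, so the $s_0$-involution leaves a whole family of unpaired boundary terms (one per value of $r$) to analyze, and each must be shown to vanish by locating a suitable positive root. As in Lemma~\ref{e7zero2}, one must loop through the $29$ values $k \in \llbracket \ell + 1, l - 1\rrbracket$ and, for each, cite explicit roots from the list of $120$ positive roots of $E_8$. This combinatorial book-keeping is the most tedious and technical part of the argument, and is appreciably heavier than for node 8 because of the extra parameter.
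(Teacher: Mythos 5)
Your treatment of positivity on $\llbracket 0,\lfloor\ell/2\rfloor\rrbracket$ and of the strict monotonicity (via the telescoping increments $T_{k+1}=\sum_{r+s=k+1}d_\zeta(r\varpi_1+s\varpi_8)>0$) matches the paper, and your computation of the level-$l$ dot action of $s_0$ agrees with the paper's formula (39). The genuine gap is in your symmetry step. The relation $d_\zeta(r\varpi_1+s\varpi_8)=-d_\zeta(r\varpi_1+(\ell+1-2r-s)\varpi_8)$ is only usable when the image weight is dominant, i.e.\ when $s\le \ell+1-2r$. In the difference $Q^{(1)}_{\ell-k}-Q^{(1)}_k=\sum_{k+1\le r+s\le \ell-k}d_\zeta(r\varpi_1+s\varpi_8)$, the involution $s\mapsto \ell+1-2r-s$ is an involution of the relevant index set only for $r\le k+1$; for $r\ge k+2$ (e.g.\ the term $d_\zeta((\ell-k)\varpi_1)$ with $s=0$) the partner index is negative and Proposition \ref{aweyl} does not apply, and these terms are neither fixed points nor obviously zero. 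This is precisely where the paper's proof does its real work: it introduces a second affine Weyl group element $\sigma_\beta=t_{(\ell+30)\beta}\circ s_\beta$ built from the positive root $\beta_{97}=\varpi_1-\varpi_8$, which gives a pairwise cancellation $d_\zeta(s\varpi_1+r\varpi_8)=-d_\zeta((\ell+13-s)\varpi_1+(2s+r-\ell-13)\varpi_8)$ for the terms with $\varpi_1$-coefficient $\ge k+13$, and it separately proves that the eleven intermediate families $D_1(m,r)$, $D_2(m,r)$ (those with $\varpi_1$-coefficient between $k+2$ and $k+12$) vanish individually, by exhibiting for each a positive root of $E_8$ from Appendix~B whose $\sin$-factor in (\ref{sin}) is zero. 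Your proposal only invokes explicit root-hunting for the vanishing property $(iii)$ on $\llbracket\ell+1,l-1\rrbracket$, but it is equally indispensable for the symmetry $(ii)$ — and since your positivity on all of $\llbracket 0,\ell\rrbracket$ and the value $Q^{(1)}_\ell=1$ are derived from $(ii)$, the gap propagates.

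A minor point: the fixed point of $s\mapsto\ell+1-2r-s$ is $s=(\ell+1)/2-r$, which is an integer if and only if $\ell$ is odd, independently of $r$; your parenthetical states the parity condition incorrectly, though this is cosmetic compared with the missing cancellation mechanism above.
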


\begin{proof}
By Formula (\ref{chari-E8}), we have
\begin{equation}\label{qk1}
Q_k^{(1)} = \displaystyle \sum_{r+s=0}^k d_\zeta(s\varpi_1+r\varpi_8)=Q_{k-1}^{(1)}+T_k,
\end{equation}
where $T_k = \displaystyle \sum_{r+s=k} d_\zeta(s\varpi_1+r\varpi_8)$.
Since $a_1=2$, we have $Q_k^{(1)}>0$ and $T_k >0$ for $0\leq k \leq \frac{\ell}{2}$. Therefore, the sequence $(Q_k^{(1)})_{0\leq k \leq \frac{\ell}{2}}$ is strictly increasing 
and positive. We first establish five useful formulas, numbered (39), (40), (41), 
 and (42).

\textbf{(a)} At level $l=\ell+30$, one has:
$$s_0 \cdot (s\varpi_1+r\varpi_8)= s\varpi_1+(\ell+1-2s-r)\varpi_8.$$
Thus for $0\leq r+2s \leq \ell+1$, Proposition \ref{aweyl} gives
\begin{equation}\label{dz1}
d_\zeta(s\varpi_1+r\varpi_8) = - d_\zeta(s\varpi_1+(\ell+1-2s-r)\varpi_8).
\end{equation}

\textbf{(b)} Let us study the terms  $d_\zeta(s\varpi_1+r\varpi_8)$ such that 
 $\lfloor\frac{\ell+13}{2}\rfloor \leq s\leq \ell+13$ and $r\geq 0$.

Let $\beta=\beta_{97}=2\alpha_1+2\alpha_2+3\alpha_3+4\alpha_4+3\alpha_5+2\alpha_6+\alpha_7= \varpi_1-\varpi_8 \in \Phi^+$. We have $(\varpi_1\mid\beta)=2$, $(\varpi_8\mid\beta)=0$ and $(\rho\mid\beta)=17$.

Define $\sigma_\beta = t_{(\ell+30)\beta} \circ s_\beta$, with $s_\beta : \lambda \mapsto \lambda - (\lambda\mid\beta)\beta \in \widehat{W}$ having signature \nolinebreak$-1$,  and $t_{(\ell+30)\beta}: \lambda \mapsto \lambda + (\ell+30)\beta \in \widehat{W}$ having signature $1$. Then $\sigma_\beta \in \widehat{W}$ has signature $-1$.

The dot action gives:
$$\sigma_\beta \cdot ( s\varpi_1+r\varpi_8 ) =(\ell+13-s)\varpi_1+ (2s+r-\ell-13)\varpi_8 .$$
Thus for $r\geq 0$ and $\lfloor\frac{\ell+13}{2}\rfloor \leq s\leq \ell+13$, Proposition \ref{aweyl} gives
\begin{equation}\label{dz97} d_\zeta (s\varpi_1+r\varpi_8  ) =-d_\zeta(  (\ell+13-s)\varpi_1+(2s+r-\ell-13)\varpi_8 ).\end{equation}

\textbf{(c)} Now define
$$\displaystyle D_1(m,r)=d_\zeta( (\lceil\frac{\ell }{2}\rceil+m-r)\varpi_1+2r\varpi_8 ),
$$
for $r\in\llbracket 0,\lceil\frac{\ell}{2}\rceil+m\rrbracket$ and $m-(\ell \mbox{ mod } 2)\in\llbracket 0,4\rrbracket$, and
$$ D_2(m,r)= d_\zeta( (\lceil\frac{\ell}{2}\rceil+m-r)\varpi_1+(2r+1)\varpi_8 ),
$$
for $r\in\llbracket 0,\lceil\frac{\ell}{2}\rceil+m\rrbracket$ and $m-(\ell \mbox{ mod } 2)\in\llbracket 0,5 \rrbracket$. 
We claim that, for these values of $r$ and $m$, there holds
\begin{equation}\label{dzD1}
D_1(m,r)=0, \quad D_2(m,r)=0.
\end{equation} Indeed, the roots $\beta_{120},\beta_{118},\beta_{116},\beta_{114}, \beta_{109}$, $\beta_{104}$, and $\beta_{57}$ make $D_1(m,r)$ vanish if $\ell$ is odd, and they cancel $D_2(m,r)$ if $\ell$ is even. On the other hand, the roots $\beta_{119}$, $\beta_{117}$, $\beta_{115}$, $\beta_{111}$, $\beta_{107}$, and $\beta_{101}$ make $D_1(m,r)$ vanish if $\ell$ is even, and they cancel $D_2(m,r)$ if $\ell$ is odd.

Finally, we have
\begin{equation}\label{dzD3} D_3(m,r):=d_\zeta((\ell+13+m)\varpi_1+r\varpi_8)=0, \:\: \forall r\geq 0, \:\: m\in\llbracket 1,16\rrbracket.\end{equation}
These terms vanish respectively because of roots $\beta_{92}$, $\beta_{89}$, $\beta_{85}$, $\beta_{82}$, $\beta_{76}$, $\beta_{71}$, $\beta_{66}$, $\beta_{59}$, $\beta_{51}$, $\beta_{46}$, $\beta_{38}$, $\beta_{31}$, $\beta_{23}$, $\beta_{16}$, $\beta_9$, and $\beta_1$.

Knowing about all these cancellations, we can now show that $T_{k+1} =-T_{\ell-k}$ for $0\leq k \leq \lfloor\frac{\ell}{2}\rfloor$.
Indeed, we have
\begin{equation}
\begin{array}{ll}
T_{\ell-k}&= \displaystyle \sum_{r=0}^{k+1} d_\zeta(r\varpi_1+(\ell-k-r)\varpi_8)\\\\&+\displaystyle \sum_{r=k+2}^{k+12} d_\zeta(r\varpi_1+(\ell-k-r)\varpi_8)\\\\&+ \displaystyle \sum_{s=k+13}^{\ell-k} d_\zeta(s\varpi_1+(\ell-k-s)\varpi_8)\\\\
&= \displaystyle \sum_{r=0}^{k+1} -d_\zeta(r\varpi_1+(k+1-r)\varpi_8)+0+0 =-T_{k+1}.
\end{array}
\end{equation}
The terms  of the third sum cancel each other out because of Formula (\ref{dz97}). Note that if $\ell\leq 13$ or if $\frac{\ell}{2}-k \leq 6$, the third sum does not appear in $T_{\ell-k}$. The second sum might be truncated depending on the values of $k$, and if $k=\frac{\ell}{2}$, it is empty, and so is the third sum. Without loss of generality, let us now suppose that $\ell \geq 13$ and that $k$ is small enough so that all the terms of the second sum appear.
The terms of the second sum are all zero because they are equal to some $D_1(m,r)$ or $D_2(m,r)$. 

For example, if $\ell$ is even, we have $k\leq \frac{\ell}{2}$, so that $\ell-k\geq \frac{\ell}{2}$, and we can write $\ell-k=\frac{\ell}{2}+a$, with $a=\frac{\ell}{2}-k\in\llbracket 0,\frac{\ell}{2}\rrbracket$. 

We get
\begin{equation}\begin{array}{ll}
S_2 &:= \displaystyle \sum_{r=k+2}^{k+12} d_\zeta(r \varpi_1+(\ell-k-r)\varpi_8)=\displaystyle \sum_{s= 2}^{12} d_\zeta((k+s) \varpi_1+(\ell-2k-s)\varpi_8)  \\&= \displaystyle \sum_{d=1}^{11} d_\zeta((\frac{\ell}{2}-a +d+1)\varpi_1+(2a-d-1)\varpi_8)\\&=\displaystyle\sum_{d=1}^{11} d_\zeta((\lceil \frac{\ell}{2}\rceil +d-a)\varpi_1+(2a-d-1)\varpi_8)\\
&= \displaystyle \sum_{n=0}^5 d_\zeta((\lceil \frac{\ell}{2}\rceil +(2n+1)-a)\varpi_1 + (2a-2n-2)\varpi_8) \\&\quad + \displaystyle \sum_{t=1}^5 d_\zeta((\lceil \frac{\ell}{2}\rceil +2t-a)\varpi_1 + (2a-2t-1)\varpi_8) \\
&= \displaystyle \sum_{m=0}^5 D_1(m,a-m-1) + D_2(m,a-m-1).\end{array}
\end{equation}
If $\ell$ is odd, the proof is analogous.

Therefore, we get
$$Q_{\ell-k}^{(1)}= \displaystyle Q_k^{(1)} + \sum_{r=k+1}^{\ell-k} T_r = Q_k^{(1)} + \underset{0}{\underbrace{(T_{k+1} + T_{\ell-k})}} + \underset{0}{\underbrace{(T_{k+2}+T_{\ell-k-1})}}+\dots $$ 
so that $Q_{\ell-k}^{(1)} = Q_k^{(1)}$ for $0\leq k \leq \lfloor\frac{\ell}{2}\rfloor.$ In particular, for $r=0$, we get \\$Q_\ell^{(1)}=Q_0^{(1)}=1$, which proves $(ii)$.

Moreover, since we already know that  $Q_k^{(1)}>0$ for $0\leq k \leq \lfloor\frac{\ell}{2}\rfloor$, we can deduce property $(iv)$, since $Q_{\ell-k}^{(1)}=Q_k^{(1)}>0$ for $0\leq k \leq \lfloor\frac{\ell}{2}\rfloor$.

We can now deduce property $(iii)$: for $k\in\llbracket \ell+1,\ell+29\rrbracket$, we have
$$Q_k^{(1)} = \displaystyle \sum_{r=0}^k T_r  =0+\displaystyle \sum_{r=\ell+2}^k T_r .$$
The remaining $T_r$ consist in terms that are either of the form $D_1(m,r)$, $D_2(m,r)$, or that cancel each other out by Formula (\ref{dz97}). Therefore, we get $Q_k^{(1)}=0$ for $k\in\llbracket \ell+1,\ell+29\rrbracket$, which proves $(iii)$.

Finally, for $k\geq 0$, since $l=\ell+30$, we have
 $$Q_{l+k}^{(1)}= \displaystyle \sum_{r=0}^{l+k} T_r = \displaystyle \sum_{r=0}^{\ell+29} T_r  + \displaystyle \sum_{r=0}^{k} T_{l+r} =\displaystyle \sum_{r=0}^{k} T_{l+r} .$$
Moreover, for $r\geq 0$, we have
$$\begin{array}{ll}T_{l+r}  &= \displaystyle \sum_{d=0}^{l+r} d_\zeta(d\varpi_1 + (l+r-d)\varpi_8) \\\\&= \displaystyle \sum_{d=0}^{\ell+13} d_\zeta(d\varpi_1 + (l+r-d)\varpi_8)+\displaystyle\sum_{d=\ell+14}^{l-1} d_\zeta(d\varpi_1 + (l+r-d)\varpi_8) \\\\&+ \displaystyle\sum_{d=l}^{l+r}d_\zeta(d\varpi_1 + (l+r-d)\varpi_8).\end{array}$$
The first sum's terms cancel each other out by Formula (\ref{dz97}). The second one consists in terms of the form $D_3(m,r)$, which are all zero. Finally, the translation  $t_{l\varpi_1}:\lambda \mapsto \lambda + (\ell+30)\varpi_1$, which has signature 1, gives, with Proposition \ref{aweyl}:
$$d_\zeta(s\varpi_1+r\varpi_8  )=d_\zeta((l+s)\varpi_1+r\varpi_8),$$
and we get
$$T_{l+r}  = 0+0+ \displaystyle \sum_{d=0}^r d_\zeta((l+d)\varpi_1 + (r-d)\varpi_8) = \displaystyle \sum_{d=0}^r d_\zeta(d\varpi_1 + (r-d)\varpi_8) = T_r .$$
Therefore, the sequence $(T_k)$ is $l$-periodic, and so is the sequence  $(Q_k^{(1)})$, which proves $(i)$ and finishes our proof of Lemma  \ref{e81}.

Finally, by Proposition~\ref{Qk+}, we have $T_k >0$ for $k\in \llbracket 0,\lfloor \ell/2\rfloor-1\rrbracket$,
hence $Q_k^{(i)} < Q_{k+1}^{(i)}$ for $k\in \llbracket 0,\lfloor \ell/2\rfloor-1\rrbracket$.
\hfill $\Box$

\end{proof}

We can now deduce a partial result at node 3.

\begin{lemma}
Properties $(i)$ to $(iii)$ of Lemma \ref{e88} are also true at node \:\nolinebreak 3.
\end{lemma}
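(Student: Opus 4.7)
The plan is to exploit the fact that node $3$ is the unique neighbour of the extremal node $1$ in the Dynkin diagram of $E_8$. The $Q$-system equation at node~$1$ therefore reads
$$(Q_k^{(1)})^2 = Q_{k-1}^{(1)} Q_{k+1}^{(1)} + Q_k^{(3)},$$
which inverts into the closed expression
$$Q_k^{(3)} = (Q_k^{(1)})^2 - Q_{k-1}^{(1)} Q_{k+1}^{(1)}.$$
All three properties at node~$3$ will then be deduced formally from the corresponding properties at node~$1$ already established in Lemma~\ref{e81}, in exactly the same spirit as the proofs of Lemma~\ref{E6nei} (type $E_6$) and Lemma~\ref{e7n} (type $E_7$).

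For property $(i)$, the $l$-periodicity of $(Q_k^{(3)})$ is immediate from the $l$-periodicity of $(Q_k^{(1)})_{k\in\Z}$ proved in Lemma~\ref{e81}$(i)$. For the symmetry in $(ii)$, substituting $\ell-k$ for $k$ in the closed expression above and applying the symmetry $Q_{\ell-j}^{(1)} = Q_j^{(1)}$ for $j=k-1,k,k+1$ yields
$$Q_{\ell-k}^{(3)} = (Q_{\ell-k}^{(1)})^2 - Q_{\ell-k-1}^{(1)} Q_{\ell-k+1}^{(1)} = (Q_k^{(1)})^2 - Q_{k+1}^{(1)} Q_{k-1}^{(1)} = Q_k^{(3)}.$$
Specialising to $k=0$ gives $Q_\ell^{(3)} = Q_0^{(3)} = 1$, which is the remaining part of $(ii)$.

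For the vanishing statement $(iii)$, I would split the range $\llbracket \ell+1, l-1\rrbracket$ into interior and boundary indices. When $k \in \llbracket \ell+2, l-2\rrbracket$, each of $Q_{k-1}^{(1)}$, $Q_k^{(1)}$, $Q_{k+1}^{(1)}$ lies in the vanishing range of Lemma~\ref{e81}$(iii)$, so $Q_k^{(3)} = 0$ trivially. At the two boundary indices $k = \ell+1$ and $k = l-1$, the term $(Q_k^{(1)})^2$ still vanishes, while in the product $Q_{k-1}^{(1)} Q_{k+1}^{(1)}$ the nonzero factor $Q_\ell^{(1)} = 1$ (respectively $Q_l^{(1)} = 1$, invoking $l$-periodicity) is paired with $Q_{\ell+2}^{(1)} = 0$ (respectively $Q_{l-2}^{(1)} = 0$), so the product vanishes as well and we conclude $Q_k^{(3)} = 0$.

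There is no real obstacle here: since we are using the $Q$-system at node~$1$ rather than at node~$3$, we never need to invoke any information about $Q_k^{(4)}$, and the whole argument reduces to formal manipulation of the identity $Q_k^{(3)} = (Q_k^{(1)})^2 - Q_{k-1}^{(1)}Q_{k+1}^{(1)}$. The only mildly delicate point, as noted above, is keeping track of the boundary indices $k = \ell+1$ and $k = l-1$ in the zero range, and this is handled by the $l$-periodicity of $Q_k^{(1)}$.
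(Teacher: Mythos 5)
Your proposal is correct and follows exactly the paper's route: the paper likewise deduces all three properties at node $3$ from the $Q$-system relation $(Q_k^{(1)})^2 = Q_{k-1}^{(1)} Q_{k+1}^{(1)} + Q_k^{(3)}$ at node $1$ together with Lemma~\ref{e81}, though it leaves the details implicit. Your careful treatment of the boundary indices $k=\ell+1$ and $k=l-1$ (and of $k=0,\ell$ in the symmetry, where one factor in $Q_{k-1}^{(1)}Q_{k+1}^{(1)}$ leaves the range $\llbracket 0,\ell\rrbracket$ and must be handled via periodicity and the vanishing property) is exactly the bookkeeping the paper omits.
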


\begin{proof}
We use the $Q$-system at node 1:
$$ (Q_k^{(1)})^2 = Q_{k-1}^{(1)} Q_{k+1}^{(1)} + Q_k^{(3)}$$
to deduce all wanted properties.  \hfill $\Box$

\end{proof}

Node 7 is the unique neighbour of node 8.

\begin{lemma}
Properties $(i)$ to $(iv)$ of Lemma \ref{e88} also hold at node 7.
\end{lemma}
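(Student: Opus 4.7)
The plan is to mimic the arguments used to pass from an extremal node to its unique neighbour in types $E_6$ (Lemma \ref{E6nei}) and $E_7$ (Lemma \ref{e7n}). Since node 7 is the unique neighbour of node 8 in the $E_8$ Dynkin diagram, the $Q$-system at node 8 reads
\[
(Q_k^{(8)})^2 = Q_{k-1}^{(8)} Q_{k+1}^{(8)} + Q_k^{(7)},
\]
which we rewrite as
\begin{equation}\label{e87rewrite}
Q_k^{(7)} = (Q_k^{(8)})^2 - Q_{k-1}^{(8)} Q_{k+1}^{(8)}.
\end{equation}
All four statements will then follow mechanically from the corresponding statements of Lemma \ref{e88} applied to the right hand side of (\ref{e87rewrite}).

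First I would verify the boundary values $Q_0^{(7)} = Q_\ell^{(7)} = 1$: the equality $Q_0^{(7)}=1$ is the initial condition, and for $Q_\ell^{(7)}$ one uses (\ref{e87rewrite}) at $k=\ell$ together with $Q_\ell^{(8)}=1$ (Lemma \ref{e88}(ii)) and $Q_{\ell+1}^{(8)}=0$ (Lemma \ref{e88}(iii)), giving $Q_\ell^{(7)} = 1 - Q_{\ell-1}^{(8)}\cdot 0 = 1$. Next, property (i) ($l$-periodicity at node 7) is immediate from (\ref{e87rewrite}) and the $l$-periodicity of $Q_k^{(8)}$. Property (iv) (symmetry) follows by replacing $k$ with $\ell-k$ in (\ref{e87rewrite}) and invoking Lemma \ref{e88}(ii). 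Property (ii) (vanishing for $k\in\llbracket \ell+1,l-1\rrbracket$) follows by a short case check: for $k=\ell+1$ one gets $0-Q_\ell^{(8)}Q_{\ell+2}^{(8)} = 0$; for $k=l-1$ one gets $0-Q_{l-2}^{(8)}\cdot 1=0$; for intermediate values all three factors $Q_k^{(8)}$, $Q_{k\pm 1}^{(8)}$ vanish by Lemma \ref{e88}(iii).

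The one property that requires a genuine ingredient is positivity (iii). Here I would use part (v) of Lemma \ref{e88}, namely the strict log-concavity of $(Q_k^{(8)})_{0\leq k\leq\ell}$. This is precisely the statement that
\[
(Q_k^{(8)})^2 > Q_{k-1}^{(8)} Q_{k+1}^{(8)}, \qquad 1\leq k\leq \ell-1,
\]
so by (\ref{e87rewrite}) we obtain $Q_k^{(7)}>0$ on this range; combined with the boundary values $Q_0^{(7)}=Q_\ell^{(7)}=1$ computed above, this gives positivity throughout $\llbracket 0,\ell\rrbracket$.

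There is no real obstacle here: the whole proof rests on the fact that Lemma \ref{e88} already supplies strict log-concavity at node 8, which is strong enough to transfer positivity through the single $Q$-system relation. The proof would terminate with \hfill$\Box$.
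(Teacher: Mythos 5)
Your proof is correct and follows exactly the paper's argument: rewrite the $Q$-system at node 8 as $Q_k^{(7)}=(Q_k^{(8)})^2-Q_{k-1}^{(8)}Q_{k+1}^{(8)}$, read off periodicity, symmetry and the vanishing range from the corresponding properties at node 8, and obtain positivity from the strict log-concavity of $(Q_k^{(8)})$ together with $Q_0^{(7)}=Q_\ell^{(7)}=1$. The only discrepancy is cosmetic: your labels (ii)--(iv) are permuted relative to the numbering in Lemma \ref{e88} (where (ii) is symmetry, (iii) is the zeroes, and (iv) is positivity), but the content matches.
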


\begin{proof}
We use the $Q$-system at node 8:
$$(Q_k^{(8)})^2=Q_{k-1}^{(8)} Q_{k+1}^{(8)}+Q_k^{(7)}.$$
We deduce properties $(i)$ to $(iii)$ from this equation. Property $(iv)$ comes from the strict log-concavity of $Q_k^{(8)}$ and the fact that $Q_0^{(7)}=Q_\ell^{(7)}=1$. \hfill $\Box$
\end{proof}

We can go further along this chain of nodes.
 
\begin{lemma}\label{e86}
Properties $(i)$ to $(iii)$ of Lemma \ref{e88} also hold at nodes  6, 5, and \nolinebreak 4.
\end{lemma}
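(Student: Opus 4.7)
The plan is to propagate properties (i)--(iii) along the $E_8$ Dynkin diagram inward, from the already-handled outer nodes $1,3,7,8$ to the inner nodes $6,5,4$, mimicking the strategy of Lemmas~\ref{E6nei}, \ref{e7n} and \ref{e745zero}. Since each of the nodes $6,5,4$ has more than one neighbour, the $Q$-system cannot be solved for $Q_k^{(j)}$ as cleanly as in type $E_6$, so I would use the $Q$-system in two complementary ways: at a \emph{neighbouring} node (to get symmetry) and at the node $j$ \emph{itself} (to get the zeros).

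\emph{Symmetry.} For each $j\in\{6,5,4\}$ I would write $Q_k^{(j)}$ on $\llbracket 0,\ell\rrbracket$ as a quotient coming from the $Q$-system at a neighbour whose value is strictly positive on $\llbracket 0,\ell\rrbracket$. Concretely, the $Q$-system at node 7 gives
\[
Q_k^{(6)} \;=\; \frac{(Q_k^{(7)})^2 - Q_{k-1}^{(7)} Q_{k+1}^{(7)}}{Q_k^{(8)}},
\]
which is well-defined since $Q_k^{(8)}>0$ on $\llbracket 0,\ell\rrbracket$ by Lemma~\ref{e88}; the symmetry of $Q^{(7)}$ and $Q^{(8)}$ then transfers to $Q^{(6)}$, and in particular $Q_\ell^{(6)}=1$. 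Analogously, the $Q$-system at node 6 (dividing by $Q_k^{(7)}>0$) gives symmetry for $Q^{(5)}$, and the $Q$-system at node 3 (dividing by $Q_k^{(1)}>0$, from Lemma~\ref{e81}) gives symmetry for $Q^{(4)}$, together with $Q_\ell^{(5)}=Q_\ell^{(4)}=1$.

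\emph{Zeros and periodicity.} For each $j\in\{6,5,4\}$, I would then use the $Q$-system at $j$ itself. For $k\in\llbracket\ell+1,l-1\rrbracket$ the product $\prod_{j'\sim j} Q_k^{(j')}$ vanishes because some neighbour has already been shown to have zeros in that range (node 7 for $j=6$; node 6 for $j=5$; node 3 for $j=4$); hence the $Q$-system reduces to $(Q_k^{(j)})^2=Q_{k-1}^{(j)}Q_{k+1}^{(j)}$. Evaluating the full $Q$-system at $k=\ell$ and substituting $Q_\ell^{(i)}=1$ for every neighbour $i$ (from the symmetries above) gives $Q_1^{(j)}\,Q_{\ell+1}^{(j)}=0$, so $Q_{\ell+1}^{(j)}=0$ provided $Q_1^{(j)}>0$. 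The latter positivity holds by Corollary~\ref{Qk+} as soon as $\ell\ge a_j$, and must be checked by hand (or via Kleber's formula \cite{Kle}, as in Lemma~\ref{e745zero}) for the few small remaining values of $\ell$. The vanishing then propagates to all of $\llbracket\ell+2,l-1\rrbracket$ by the familiar contradiction argument: if $k_0$ were the smallest index where $Q_{\ell+k_0}^{(j)}\neq 0$, then $Q_{\ell+k_0-1}^{(j)}=0$ together with the vanishing of the neighbour product would force $(Q_{\ell+k_0}^{(j)})^2=0$, a contradiction. The $l$-periodicity follows because $(Q_{k+l}^{(j)})$ satisfies the unrestricted $Q$-system with the same initial data as $(Q_k^{(j)})$.

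\emph{Main obstacle.} The sticking point is node 4: here the neighbour product at $k=\ell$ contains the factor $Q_\ell^{(2)}$, and since node 2 is not among the outer nodes already handled, nor is its symmetry available (property (ii) is explicitly excluded at $i=2$ in Theorem~\ref{knsE678}(c)), the identity $Q_\ell^{(2)}=1$ must be established by a \emph{separate} argument. The natural route, in parallel with Lemma~\ref{e7one2} for type $E_7$, is to organise the character decomposition of $W_\ell^{(2)}$ into pairs swapped by the level-$l$ dot action of $\widehat{W}$ (Proposition~\ref{aweyl}) so that all contributions $d_\zeta(\lambda)$ cancel except for a single surviving term equal to $1$. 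Because no simple Chari-type formula for $\chi(W_k^{(2)})$ is available in type $E_8$, performing the cancellations by inspection of the weights of $W_\ell^{(2)}$ against the list of positive roots of $E_8$ in Appendix~B is expected to be the most technical part of the proof.
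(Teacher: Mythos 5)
Your overall architecture is the same as the paper's: symmetry at $j\in\{6,5,4\}$ is read off from the $Q$-system at the outer neighbour (node $7$, $6$, $3$ respectively) by dividing by a factor known to be positive on $\llbracket 0,\ell\rrbracket$, and the zeros and periodicity come from the $Q$-system at $j$ itself, following the template of Lemma~\ref{e745zero}. Your ``minimal counterexample'' propagation of the zeros is fine. The problem is in the initial step $Q_{\ell+1}^{(j)}=0$: you substitute $Q_\ell^{(i)}=1$ ``for every neighbour $i$, from the symmetries above,'' but for the \emph{inner} neighbour this is circular. Concretely, at $j=6$ the neighbour product at $k=\ell$ is $Q_\ell^{(5)}Q_\ell^{(7)}$, and your only access to $Q_\ell^{(5)}$ is the identity $Q_\ell^{(5)}=\bigl((Q_\ell^{(6)})^2-Q_{\ell-1}^{(6)}Q_{\ell+1}^{(6)}\bigr)/Q_\ell^{(7)}=1-Q_1^{(6)}Q_{\ell+1}^{(6)}$, which already involves the quantity $Q_{\ell+1}^{(6)}$ you are trying to kill; substituting it back into the node-$6$ relation at $k=\ell$ yields the tautology $1=1$. (The symmetry of $Q^{(5)}$ you derive is genuine only on $\llbracket 1,\ell-1\rrbracket$; the endpoint value $Q_\ell^{(5)}=1$ is exactly what is missing.) Note that node $5$ does \emph{not} have this problem: both of its neighbour values $Q_\ell^{(4)}$ and $Q_\ell^{(6)}$ equal $1$ by the node-$3$ and node-$7$ relations together with the already-established zeros at nodes $3$ and $7$. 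So the order of attack matters, and at node $6$ some additional input is needed that your write-up (and, to be fair, the paper's very terse one) does not supply. You correctly flag the analogous obstruction at node $4$ (the factor $Q_\ell^{(2)}$) but miss the one at node $6$.

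On node $4$ you also diverge from the paper in a way that matters. The paper does \emph{not} establish $Q_\ell^{(2)}=1$ beforehand: its next lemma \emph{derives} $Q_l^{(2)}=Q_\ell^{(2)}=1$ from the node-$4$ instance of the $Q$-system once Lemma~\ref{e86} is in place, so the logical flow runs in the opposite direction to yours. Your proposed substitute --- pairing off the constituents of $\chi(W_\ell^{(2)})$ under the level-$l$ dot action as in Lemma~\ref{e7one2} --- is not realistically executable, because in type $E_8$ the multiplicities $a_k^{(2)}(\lambda)$ are not given by any closed Chari-type formula (node $2$ has $a_2=3$ and is adjacent to the trivalent node), so there is no list of weights to pair off. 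If you want a self-contained argument you should instead look for the missing inputs ($Q_{\ell+1}^{(6)}=0$, $Q_\ell^{(2)}=1$) via auxiliary identities such as Kleber's decomposition of $W_1^{(j)}$, extended periodicity, or the relation $(Q_\ell^{(5)})^2=Q_{\ell-1}^{(5)}Q_{\ell+1}^{(5)}+Q_\ell^{(4)}Q_\ell^{(6)}$ evaluated after node $5$ is settled, rather than a direct character computation at node $2$.
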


\begin{proof}
At node 6, we use the $Q$-system at node 7:
$$(Q_k^{(7)})^2=Q_{k-1}^{(7)} Q_{k+1}^{(7)} + Q_k^{(6)} Q_k^{(8)},$$
to prove property $(ii)$, as well as the $l$-periodicity when $Q_k^{(8)} \neq 0$.

For the full periodicity and property $(iii)$, we use the $Q$-system at node 6. The proof is the same as that of Lemma \ref{e745zero}.

The proof at nodes 5 and 4 is analogous: we use the $Q$-system respectively at nodes 6 and 5, and 4 and 3. 
\hfill $\Box$
\end{proof}

\begin{lemma}
At node 2, we have $Q_l^{(2)}=Q_\ell^{(2)}=1$, and $Q_{nl+\ell+k}^{(2)}=0$ for all $1\leq k \leq h-1$ and $n\in\mathbb{N}$.
\end{lemma}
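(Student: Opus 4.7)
My plan hinges on two structural observations. First, in $E_8$ node~$4$ is the unique neighbour of node~$2$, so the $Q$-system at node~$2$ reduces to
\[
(Q_k^{(2)})^2 \;=\; Q_{k-1}^{(2)}Q_{k+1}^{(2)} + Q_k^{(4)}.
\]
Second, the $Q$-system at node~$4$,
\[
(Q_k^{(4)})^2 \;=\; Q_{k-1}^{(4)}Q_{k+1}^{(4)} + Q_k^{(2)}Q_k^{(3)}Q_k^{(5)},
\]
expresses $Q^{(2)}$ in terms of $Q^{(3)},Q^{(4)},Q^{(5)}$; by Lemma~\ref{e86} and the preceding result at node~$3$, these latter sequences are $l$-periodic, symmetric ($Q_{\ell-j}^{(i)}=Q_j^{(i)}$), and vanish on $\llbracket\ell+1,l-1\rrbracket$.

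I would start by showing $Q_\ell^{(2)}=Q_l^{(2)}=1$, and more generally $Q_{nl+\ell}^{(2)}=1$ for every $n\in\N$. Specialising the $Q$-system at node~$4$ at $k=nl+\ell$ and using $l$-periodicity plus symmetry at nodes $3,4,5$, the right-hand side collapses to $Q_{nl+\ell}^{(2)}$ and the left-hand side to $1$; a parallel specialisation at $k=l$, using $l$-periodicity at nodes $3,4,5$ together with $Q_{l-1}^{(4)}=0$, yields $Q_l^{(2)}=1$. I would then establish the auxiliary identity $Q_{nl+\ell-1}^{(2)}=Q_1^{(2)}>0$: the $Q$-system at node~$4$ at $k=1$ and at $k=nl+\ell-1$ both simplify to $(Q_1^{(4)})^2 = Q_2^{(4)} + X\,Q_1^{(3)}Q_1^{(5)}$, with $X=Q_1^{(2)}$ or $X=Q_{nl+\ell-1}^{(2)}$. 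Cancelling the common factor $Q_1^{(3)}Q_1^{(5)}$ (positive by Corollary~\ref{Qk+} for $\ell\geq 5$, and by direct computation for $\ell<5$) gives the claim; the positivity of $Q_1^{(2)}$ itself is Corollary~\ref{Qk+} for $\ell\geq 3$, with $\ell=1,2$ handled directly.

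With these ingredients, the $Q$-system at node~$2$ at $k=nl+\ell$ reads $1=Q_1^{(2)}Q_{nl+\ell+1}^{(2)}+1$, forcing $Q_{nl+\ell+1}^{(2)}=0$. The remaining zeros will follow by induction on $k$: for $2\leq k\leq h-1$, the $Q$-system at node~$2$ at index $nl+\ell+k$ reads
\[
(Q_{nl+\ell+k}^{(2)})^2 = Q_{nl+\ell+k-1}^{(2)}Q_{nl+\ell+k+1}^{(2)}+Q_{nl+\ell+k}^{(4)};
\]
the inductive hypothesis kills the first summand, and the $l$-periodicity and vanishing property at node~$4$ kill the second, giving $(Q_{nl+\ell+k}^{(2)})^2=0$ and hence $Q_{nl+\ell+k}^{(2)}=0$.

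I do not expect a serious conceptual obstacle. The only mildly technical point is the small-$\ell$ verification of positivity of $Q_1^{(2)}$, $Q_1^{(3)}$, $Q_1^{(5)}$ outside the range $k\leq \ell/a_i$ of Corollary~\ref{Qk+}, which reduces to a routine finite check.
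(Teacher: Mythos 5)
Your proposal is correct and follows essentially the same route as the paper: the paper's (very terse) proof likewise extracts $Q_\ell^{(2)}=Q_l^{(2)}=1$ from the $Q$-system at node $4$ together with the periodicity, symmetry and vanishing already established at nodes $3$, $4$, $5$, and then obtains the zeroes inductively from the $Q$-system at node $2$. The only point you develop beyond the paper's two-sentence argument is the nonvanishing of $Q_{nl+\ell-1}^{(2)}$ (via the identity $Q_{nl+\ell-1}^{(2)}=Q_1^{(2)}>0$ and the small-$\ell$ checks), which is indeed needed to extract the first zero from $1=Q_{nl+\ell-1}^{(2)}Q_{nl+\ell+1}^{(2)}+1$ and which the paper leaves implicit.
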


\begin{proof}
The $Q$-system at node 4 gives $Q_l^{(2)}=Q_\ell^{(2)}=1$. We then use the \\$Q$-system at node 2 to obtain the zero's and their $l$-periodicity. \hfill$\Box$
\end{proof}

This completes the proof of Theorem \ref{knsE678} in type $E_8$.

\newpage

\appendix
\section*{Appendix A:  Positive roots of $E_7$}

The 63 positive roots $\beta$ are listed below.
The second column gives the height $(\rho|\beta)$, and the third column the decomposition of $\beta$
on the basis of simple roots.

\[
{
\begin{array}{|c|c|c|}
\hline
\mbox{No.} & \mbox{height} & \beta= \displaystyle \sum_{i=1}^7 b_i \alpha_i \\
\hline
1&1&(1,0,0,0,0,0,0)\\ \hline
2&1&(0,1,0,0,0,0,0)\\\hline
3&1&(0,0,1,0,0,0,0)\\\hline
4&1&(0,0,0,1,0,0,0)\\\hline
5&1&(0,0,0,0,1,0,0)\\\hline
6&1&(0,0,0,0,0,1,0)\\\hline
7&1&(0,0,0,0,0,0,1)\\\hline
8&2&(1,0,1,0,0,0,0)\\\hline
9&2&(0,1,0,1,0,0,0)\\\hline
10&2&(0,0,1,1,0,0,0)\\\hline
11&2&(0,0,0,1,1,0,0)\\\hline
12&2&(0,0,0,0,1,1,0)\\\hline
13&2&(0,0,0,0,0,1,1)\\\hline
14&3&(1,0,1,1,0,0,0)\\\hline
15&3&(0,1,1,1,0,0,0)\\\hline
16&3&(0,1,0,1,1,0,0)\\\hline
17&3&(0,0,1,1,1,0,0)\\\hline
18&3&(0,0,0,1,1,1,0)\\\hline
19&3&(0,0,0,0,1,1,1)\\\hline
20&4&(1,1,1,1,0,0,0)\\\hline
21&4&(1,0,1,1,1,0,0)\\\hline
22&4&(0,1,1,1,1,0,0)\\\hline
23&4&(0,1,0,1,1,1,0)\\\hline
24&4&(0,0,1,1,1,1,0)\\\hline
25&4&(0,0,0,1,1,1,1)\\\hline
26&5&(1,1,1,1,1,0,0)\\\hline
27&5&(1,0,1,1,1,1,0)\\\hline
28&5&(0,1,1,2,1,0,0)\\\hline
29&5&(0,1,1,1,1,1,0)\\\hline
30&5&(0,1,0,1,1,1,1)\\\hline
31&5&(0,0,1,1,1,1,1)\\\hline
32&6&(1,1,1,2,1,0,0)\\\hline
\end{array}
}
\qquad
{
\begin{array}{|c|c|c|}
\hline
\mbox{No.} & \mbox{height} & \beta= \displaystyle \sum_{i=1}^7 b_i \alpha_i \\
\hline
33&6&(1,1,1,1,1,1,0)\\\hline
34&6&(1,0,1,1,1,1,1)\\\hline
35&6&(0,1,1,2,1,1,0)\\\hline
36&6&(0,1,1,1,1,1,1)\\\hline
37&7&(1,1,2,2,1,0,0)\\\hline
38&7&(1,1,1,2,1,1,0)\\\hline
39&7&(1,1,1,1,1,1,1)\\\hline
40&7&(0,1,1,2,2,1,0)\\\hline
41&7&(0,1,1,2,1,1,1)\\\hline
42&7&(1,1,2,2,1,1,0)\\\hline
43&8&(1,1,1,2,2,1,0)\\\hline
44&8&(1,1,1,2,1,1,1)\\\hline
45&8&(0,1,1,2,2,1,1)\\\hline
46&9&(1,1,2,2,2,1,0)\\\hline
47&9&(1,1,2,2,1,1,1)\\\hline
48&9&(1,1,1,2,2,1,1)\\\hline
49&9&(0,1,1,2,2,2,1)\\\hline
50&10&(1,1,2,3,2,1,0)\\\hline
51&10&(1,1,2,2,2,1,1)\\\hline
52&10&(1,1,1,2,2,2,1)\\\hline
53&11&(1,2,2,3,2,1,0)\\\hline
54&11&(1,1,2,3,2,1,1)\\\hline
55&11&(1,1,2,2,2,2,1)\\\hline
56&12&(1,2,2,3,2,1,1)\\\hline
57&12&(1,1,2,3,2,2,1)\\\hline
58&13&(1,2,2,3,2,2,1)\\\hline
59&13&(1,1,2,3,3,2,1)\\\hline
60&14&(1,2,2,3,3,2,1)\\\hline
61&15&(1,2,2,4,3,2,1)\\\hline
62&16&(1,2,3,4,3,2,1)\\\hline
63&17&(2,2,3,4,3,2,1)\\\hline
\end{array}
}
\]

\newpage
\section*{Appendix B: Positive roots for $E_8$}

The 120 positive roots $\beta$ are listed below. The second column gives the height $(\rho\mid\beta)$, and the third column the decomposition of $\beta$ on the basis of simple roots.

\[
{\begin{array}{|c|c|c|}
\hline
\mbox{No.} & \mbox{height} & \beta= \displaystyle \sum_{i=1}^8 b_i \alpha_i \\
\hline
1&1&(1,0,0,0,0,0,0,0 )\\\hline
2&1&(0,1,0,0,0,0,0,0 )\\\hline
3&1&(0,0,1,0,0,0,0,0 )\\\hline
4&1&(0,0,0,1,0,0,0,0 )\\\hline
5&1&(0,0,0,0,1,0,0,0 )\\\hline
6&1&(0,0,0,0,0,1,0,0 )\\\hline
7&1&(0,0,0,0,0,0,1,0 )\\\hline
8&1&(0,0,0,0,0,0,0,1 )\\\hline
9&2&(1,0,1,0,0,0,0,0 )\\\hline
10&2&(0,1,0,1,0,0,0,0 )\\\hline
11&2&(0,0,1,1,0,0,0,0 )\\\hline
12&2&(0,0,0,1,1,0,0,0 )\\\hline
13&2&(0,0,0,0,1,1,0,0 )\\\hline
14&2&(0,0,0,0,0,1,1,0 )\\\hline
15&2&(0,0,0,0,0,0,1,1 )\\\hline
16&3&(1,0,1,1,0,0,0,0 )\\\hline
17&3&(0,1,1,1,0,0,0,0 )\\\hline
18&3&(0,1,0,1,1,0,0,0 )\\\hline
19&3&(0,0,1,1,1,0,0,0 )\\\hline
20&3&(0,0,0,1,1,1,0,0 )\\\hline
21&3&(0,0,0,0,1,1,1,0 )\\\hline
22&3&(0,0,0,0,0,1,1,1 )\\\hline
23&4&(1,1,1,1,0,0,0,0 )\\\hline
24&4&(1,0,1,1,1,0,0,0 )\\\hline
25&4&(0,1,1,1,1,0,0,0 )\\\hline
26&4&(0,1,0,1,1,1,0,0 )\\\hline
27&4&(0,0,1,1,1,1,0,0 )\\\hline
28&4&(0,0,0,1,1,1,1,0 )\\\hline
29&4&(0,0,0,0,1,1,1,1 )\\\hline
30&5&(1,1,1,1,1,0,0,0 )\\\hline
\end{array}
}
\qquad
{
\begin{array}{|c|c|c|}
\hline
\mbox{No.} & \mbox{height} & \beta= \displaystyle \sum_{i=1}^8 b_i \alpha_i \\
\hline
31&5&(1,0,1,1,1,1,0,0 )\\\hline
32&5&(0,1,1,2,1,0,0,0 ) \\\hline
33&5&(0,1,1,1,1,1,0,0 ) \\\hline
34&5&(0,1,0,1,1,1,1,0 ) \\\hline
35&5&(0,0,1,1,1,1,1,0 ) \\\hline
36&5&(0,0,0,1,1,1,1,1 ) \\\hline
37&6&(1,1,1,2,1,0,0,0 ) \\\hline
38&6&(1,1,1,1,1,1,0,0 ) \\\hline
39&6&(1,0,1,1,1,1,1,0 )\\\hline
40&6&(0,1,1,2,1,1,0,0 ) \\\hline
41&6&(0,1,1,1,1,1,1,0 ) \\\hline
42&6&(0,1,0,1,1,1,1,1 ) \\\hline
43&6&(0,0,1,1,1,1,1,1 ) \\\hline
44&7&(1,1,2,2,1,0,0,0 ) \\\hline
45&7&(1,1,1,2,1,1,0,0 ) \\\hline
46&7&(1,1,1,1,1,1,1,0 ) \\\hline
47&7&(1,0,1,1,1,1,1,1 ) \\\hline
48&7&(0,1,1,2,2,1,0,0 ) \\\hline
49&7&(0,1,1,2,1,1,1,0 ) \\\hline
50&7&(0,1,1,1,1,1,1,1 ) \\\hline
51&8&(1,1,2,2,1,1,0,0 ) \\\hline
52&8&(1,1,1,2,2,1,0,0 ) \\\hline
53&8&(1,1,1,2,1,1,1,0 ) \\\hline
54&8&(1,1,1,1,1,1,1,1 ) \\\hline
55&8&(0,1,1,2,2,1,1,0 ) \\\hline
56&8&(0,1,1,2,1,1,1,1 ) \\\hline
57&9&(1,1,2,2,2,1,0,0 ) \\\hline
58&9&(1,1,2,2,1,1,1,0 ) \\\hline
59&9&(1,1,1,2,2,1,1,0 ) \\\hline
60&9&(1,1,1,2,1,1,1,1 ) \\\hline
\end{array}
}\]

\newpage

\[
{\begin{array}{|c|c|c|}
\hline
\mbox{No.} & \mbox{height} & \beta= \displaystyle \sum_{i=1}^8 b_i \alpha_i \\
\hline
61&9&(0,1,1,2,2,2,1,0 ) \\\hline
62&9&(0,1,1,2,2,1,1,1 ) \\\hline
63&10&(1,1,2,3,2,1,0,0 ) \\\hline
64&10&(1,1,2,2,2,1,1,0 ) \\\hline
65&10&(1,1,2,2,1,1,1,1 ) \\\hline
66&10&(1,1,1,2,2,2,1,0 ) \\\hline
67&10&(1,1,1,2,2,1,1,1 ) \\\hline
68&10&(0,1,1,2,2,2,1,1 ) \\\hline
69&11&(1,2,2,3,2,1,0,0 ) \\\hline
70&11&(1,1,2,3,2,1,1,0 ) \\\hline
71&11&(1,1,2,2,2,2,1,0 ) \\\hline
72&11&(1,1,2,2,2,1,1,1 ) \\\hline
73&11&(1,1,1,2,2,2,1,1 ) \\\hline
74&11&(0,1,1,2,2,2,2,1 )\\\hline
75&12&(1,2,2,3,2,1,1,0 ) \\\hline
76&12&(1,1,2,3,2,2,1,0 ) \\\hline
77&12&(1,1,2,3,2,1,1,1 ) \\\hline
78&12&(1,1,2,2,2,2,1,1 ) \\\hline
79&12&(1,1,1,2,2,2,2,1 ) \\\hline
80&13&(1,2,2,3,2,2,1,0 ) \\\hline
81&13&(1,2,2,3,2,1,1,1 ) \\\hline
82&13&(1,1,2,3,3,2,1,0 ) \\\hline
83&13&(1,1,2,3,2,2,1,1 ) \\\hline
84&13&(1,1,2,2,2,2,2,1 ) \\\hline
85&14&(1,2,2,3,3,2,1,0 ) \\\hline
86&14&(1,2,2,3,2,2,1,1 ) \\\hline
87&14&(1,1,2,3,3,2,1,1 ) \\\hline
88&14&(1,1,2,3,2,2,2,1 ) \\\hline
89&15&(1,2,2,4,3,2,1,0 ) \\\hline
90&15&(1,2,2,3,3,2,1,1 ) \\\hline
\end{array}
}
\qquad
{
\begin{array}{|c|c|c|}
\hline
\mbox{No.} & \mbox{height} & \beta= \displaystyle \sum_{i=1}^8 b_i \alpha_i \\
\hline
91&15&(1,2,2,3,2,2,2,1 ) \\\hline
92&15&(1,1,2,3,3,2,2,1 ) \\\hline
93&16&(1,2,3,4,3,2,1,0 ) \\\hline
94&16&(1,2,2,4,3,2,1,1 ) \\\hline
95&16&(1,2,2,3,3,2,2,1 ) \\\hline
96&16&(1,1,2,3,3,3,2,1 ) \\\hline
97&17&(2,2,3,4,3,2,1,0 ) \\\hline
98&17&(1,2,3,4,3,2,1,1 ) \\\hline
99&17&(1,2,2,4,3,2,2,1 ) \\\hline
100&17&(1,2,2,3,3,3,2,1 ) \\\hline
101&18&(2,2,3,4,3,2,1,1 ) \\\hline
102&18&(1,2,3,4,3,2,2,1 ) \\\hline
103&18&(1,2,2,4,3,3,2,1 ) \\\hline
104&19&(2,2,3,4,3,2,2,1 ) \\\hline
105&19&(1,2,3,4,3,3,2,1 ) \\\hline
106&19&(1,2,2,4,4,3,2,1 ) \\\hline
107&20&(2,2,3,4,3,3,2,1 ) \\\hline
108&20&(1,2,3,4,4,3,2,1 ) \\\hline
109&21&(2,2,3,4,4,3,2,1 ) \\\hline
110&21&(1,2,3,5,4,3,2,1 ) \\\hline
111&22&(2,2,3,5,4,3,2,1 ) \\\hline
112&22&(1,3,3,5,4,3,2,1 ) \\\hline
113&23&(2,3,3,5,4,3,2,1 ) \\\hline
114&23&(2,2,4,5,4,3,2,1 )\\\hline
115&24&(2,3,4,5,4,3,2,1 ) \\\hline
116&25&(2,3,4,6,4,3,2,1 ) \\\hline
117&26&(2,3,4,6,5,3,2,1 ) \\\hline
118&27&(2,3,4,6,5,4,2,1 ) \\\hline
119&28&(2,3,4,6,5,4,3,1 ) \\\hline
120&29&(2,3,4,6,5,4,3,2 ) \\
\hline
\end{array}
}\]

\newpage

\frenchspacing


\begin{thebibliography}{7}

\bibitem{B} N. Bourbaki, 
\emph{Groupes et alg\`ebres de Lie}, Chapitres 4, 5 et 6,
Hermann 1968.

\bibitem{Br} P. Br\"and\'en, Iterated sequences and the geometry of zeroes,
\emph{J. Reine Angew. Math.} \textbf{658} (2011), p. 115-131

\bibitem{Cha} V. Chari, On the fermionic formula and the Kirillov-Reshetikhin conjecture,  \emph{Int. Math. Res. Notices}  (2000), p.629-654.

\bibitem{CP} V. Chari and A. Pressley,
\emph{A guide to quantum groups}.
Cambridge 1994.

\bibitem{FMS} Philippe Di Francesco, P. Mathieu, and D. Sénéchal,
Conformal field theory, \emph{Graduate
Texts in Contemporary Physics}, Springer (1997)

\bibitem{KR} A.N. Kirillov and N. Reshetikhin,
Representations of Yangians and multiplicities of the inclusion
of the irreducible components of the tensor product of 
representations of simple Lie algebras.
\emph{J. Sov. Math.} \textbf{52} (1990), 3156--3164.

\bibitem{KNS1} A. Kuniba, T. Nakanishi and J. Suzuki, 
Functional relations in solvable lattice models. I. 
Functional relations and representation theory.  
\emph{Internat. J. Modern Phys. A}  \textbf{9}  (1994), p.5215-5266,

\bibitem{KNS2} A. Kuniba, T. Nakanishi and J. Suzuki, 
T-systems and Y-systems in integrable systems, \emph{J. Phys. A: Math. Theor.} \textbf{44} (2011)

\bibitem{Kle} M. Kleber, Finite Dimensional Representations of Quantum Affine Algebras, arXiv:9809087v1 (1998)

\bibitem{Leethesis} C.-H. Lee, \emph{Algebraic structures in modular $q$-hypergeometric series},
PhD dissertation, U. C. Berkeley 2012. 

\bibitem{Lee} C.-H. Lee,
A Proof of the KNS conjecture : $D_r$ case,
arXiv:1210.1669.

\bibitem{Lee2} C.-H. Lee,
Positivity and periodicity of $Q$-systems in the WZW fusion ring, 
arXiv 1302.1467.

\bibitem{MNS} P.R. W. McNamara, B. E. Sagan, Infinite log-concavity: developments and conjectures, 
\emph{Adv. Appl. Math.} \textbf{44} (2010), 1--15.

\bibitem{N} H. Nakajima, 
$t$-analogs of $q$-characters of Kirillov-Reshetikhin modules 
of quantum affine algebras.  
\emph{Represent. Theory}  \textbf{7}  (2003), 259--274. 

\bibitem{Sta} R.P. Stanley, Unimodal and log-concave sequences in algebra, combinatorics, and geometry, in \emph{Graph Theory and Its Applications: East and West, Ann. New York Acad. Sci.} \textbf{576} (1989) pp. 500-535. 

\bibitem{Wil} H. S. Wilf, \emph{generatingfunctionology}, A. K. Peters/CRC Press 2005.


\end{thebibliography}
\end{document}